\theoremstyle{change}
\newtheorem{theorem}{Theorem}[section]
\newtheorem{lemma}[theorem]{Lemma}
\newtheorem{proposition}[theorem]{Proposition}
\def\proofsymbol{\rule{0.5em}{0.5em}}
\theoremstyle{nonumberplain}
\newtheorem{proof}{Proof}
\theoremstyle{empty}
\newtheorem{proofof}{}
\setlist{
  listparindent=\parindent,
  parsep=0pt,
  itemsep=0.5em plus 0.25em minus 0.2em,
}
\newlist{lenumerate}{enumerate}{3}
\setlist[lenumerate]{
  listparindent=\parindent,
  itemsep=0.5em plus 0.25em minus 0.3em,
  fullwidth,
  labelsep=0.75em,
  label=\arabic*.
}
\numberwithin{equation}{section} 
\def\C{{\mathbb C}}
\def\N{{\mathbb N}}
\def\R{{\mathbb R}}
\def\CC{{\mathcal C}}
\def\CG{{\mathcal G}}
\def\CL{{\mathcal L}}
\def\CP{{\mathcal P}}
\def\CR{{\mathcal R}}
\def\CS{{\mathcal S}}
\def\phi{\varphi}
\def\qand{{\quad\mbox{and}\quad}}
\def\d{{\rm d}}
\def\1{{{\ae}}}
\def\2{{{\o}}}
\def\3{{{\aa}}}
\def\6{\, {\rm d}}
\def\ri{{\rm i}}
\def\e{{\rm e}}
\def\<{{\langle}}
\def\>{{\rangle}}
\def\brinv{{\langle -1\rangle}}
\def\im{{\sf Im}}
\def\re{{\sf Re}}
\def\supp{{\sf supp}}
\def\ID{\mathcal{ID}}
\def\diff{\frac{\rm d}{{\rm d}t}_{\bigm |_{t=0}}}
\begin{document}

\title{On the free Gamma distributions  \date{}}

\author{\sc Uffe Haagerup\footnote{Uffe Haagerup is supported by ERC
    Advanced Grant No.~OAFPG~27731 and the Danish National Research
    Foundation through the Center for Symmetry and Deformation (DNRF92).}
 and Steen Thorbj{\o}rnsen}

\maketitle

\begin{abstract}
For each positive number $\alpha$ we study the analog $\nu_\alpha$ in free
probability of the classical Gamma distribution with parameter
$\alpha$. We prove that $\nu_\alpha$ is absolutely continuous and
establish the main properties of the density, including analyticity
and unimodality. We study further the
asymptotic behavior of $\nu_\alpha$ as $\alpha\downarrow 0$.
\end{abstract}

\section{Introduction}

In this paper we study the free Gamma distributions, i.e., the images
of the classical Gamma distributions under the bijection between the
classes of infinitely divisible measures in classical and free
probability, respectively,
introduced by Bercovici and Pata (cf.\ \cite{bp2} and
\cite{bt1}). More precisely, for any positive number $\alpha$
the free gamma distribution $\nu_\alpha$ with parameter $\alpha$ is
defined as $\Lambda(\mu_\alpha)$, where $\Lambda$ is the
Bercovici-Pata bijection (see Section~\ref{prelim}) and $\mu_\alpha$
is the classical Gamma distribution with parameter $\alpha$, i.e.,
\begin{equation}
\mu_\alpha(B)=\frac{1}{\Gamma(\alpha)}
\int_{B\cap[0,\infty)}t^{\alpha-1}\e^{-t}\6t
\label{eqI.4}
\end{equation}
for any Borel set $B$ in $\R$.

The classical Gamma distributions form perhaps the simplest class of
selfdecomposable measures on $\R$ which are not stable (see
Section~\ref{prelim}). Since
$\Lambda$ preserves the notions of stability and selfdecomposability
(see \cite{bp2} and \cite{bt1}), the measures $\nu_\alpha$ are thus of
interest as (the simplest?) examples of non-stable selfdecomposable
measures with respect to free (additive) convolution. Of particular
interest is the free $\chi^2$-distribution $\Lambda(\chi_1^2)$, which
(up to scaling by 2) equals the measure $\nu_{1/2}$. Apart from the
general importance of the $\chi^2$-distribution in classical
probability, this is mainly due to the fact that the 
square of the semi-circle distribution (the analog of the Gaussian
distribution in free probability) equals the free Poisson distribution
(the image of the classical Poisson law by $\Lambda$) as observed e.g.\
in \cite{vdn}. Since $\Lambda$ is injective, the relationship between
the Gaussian and the $\chi^2$-distribution thus breaks down in free
probability, and from that point of view it
is of some interest to identify further the measure $\Lambda(\chi_1^2)$.

In an appendix to the paper \cite{bp2} P.~Biane studied the freely
stable distributions and established their absolute
continuity (with respect to Lebesgue measure) as well as the main
features of their densities; in particular analyticity and
unimodality. Applying the 
same method as Biane (based on Stieltjes inversion) we establish in
the present paper that the free
Gamma distributions $\nu_\alpha$ are absolutely continuous with
analytic densities, and that
they have supports in the form $[s_\alpha,\infty)$ for some strictly
positive number $s_\alpha$, which increases (strictly) with $\alpha$
and tends to 0 and $\infty$ as $\alpha$ goes to $0$ and $\infty$,
respectively. We derive an (implicit) expression for the density
$f_\alpha$ of $\nu_\alpha$ in the form:
\begin{equation}
f_\alpha(P_\alpha(x))=
\frac{1}{\pi}
\frac{v_\alpha(x)}{x^2+v_\alpha(x)^2}
\qquad(x\in[-c_\alpha,\infty)),
\label{eqI.1}
\end{equation}
where $P_\alpha$ is a strictly increasing function given by
\[
P_\alpha(x)=
2x+\alpha-\alpha\int_0^\infty\frac{t^2\e^{-t}}{(x-t)^2+v_\alpha(x)^2}\6t,
\qquad(x\in[-c_\alpha,\infty)),
\]
and $c_\alpha$ is a positive constant such that
$P_\alpha(-c_\alpha)=s_\alpha$. Moreover $v_\alpha\colon\R\to\R$ is a
function essentially defined by the condition:
\[
(x+\ri v_\alpha(x))\big[1+\alpha G_{\mu_1}(x+\ri v_\alpha(x))\big]\in\R,
\]
where $G_{\mu}$ denotes the Cauchy (or Stieltjes) transform of a
probability measure $\mu$ on $\R$ (see formula \eqref{eqPL.6} below).
This condition emerges naturally from the method of Stieltjes
inversion in combination with the key formula:
\begin{equation}
G_{\nu_\alpha}(z(1+\alpha G_{\mu_1}(z)))=\frac{1}{z},
\label{eqI.2}
\end{equation}
which holds for all $z$ in $\C^+$
satisfying that $z(1+\alpha G_{\mu_1}(z))\in\C^+$.
The passage from \eqref{eqI.2} to \eqref{eqI.1} via Stieltjes
inversion depends heavily on a fundamental result of
Bercovici and Voiculescu, which we state in Lemma~\ref{BV key lemma}
below  for the readers
convenience. 

By careful studies of the functions $v_\alpha$, $P_\alpha$
and the right hand side of \eqref{eqI.1}, we derive
some main features of the density $f_{\alpha}$, e.g.\ analyticity,
unimodality and the asymptotic behavior:
\begin{equation}
\lim_{\xi\to\infty}\frac{f_\alpha(\xi)}{\xi^{-1}\e^{-\xi}}=\alpha\e^\alpha, 
\qand
\lim_{\xi\downarrow s_\alpha}\Big(\frac{f_\alpha(\xi)}{\sqrt{\xi-s_\alpha}}\Big)
=\frac{\sqrt{2}}{\pi c_\alpha\sqrt{s_\alpha-c_\alpha^2}}.
\label{eqI.3}
\end{equation}
In particular it follows that $\nu_{\alpha}$ has moments of all
orders, which is in concordance with the results of Benaych-George in
\cite{BG}. 

We study also the asymptotic behavior of $\nu_\alpha$ as
$\alpha\downarrow0$, and we prove that the measures $\frac{1}{\alpha}\nu_\alpha$
converge to the measure $x^{-1}\e^{-x}1_{(0.\infty)}(x)\6x$ in moments
and in the sense of point-wise convergence of the densities:
\[
\lim_{\alpha\downarrow0}\frac{f_\alpha(\xi)}{\alpha}=\xi^{-1}\e^{-\xi}
\qquad(\xi\in(0,\infty)).
\]


The remainder of the paper is organized as follows: In
Section~\ref{prelim} we collect background material on infinite
divisibility, The Bercovici-Pata bijection and Stieltjes inversion. In
Section~\ref{abs cont} we establish absolute continuity of
$\nu_\alpha$ and prove the expression \eqref{eqI.1} for the
density. In Section~\ref{sec asympt opf} we establish the asymptotic
behavior \eqref{eqI.3} and study how the quantities $c_\alpha$ and
$s_\alpha$ vary as functions of $\alpha$. In Section~\ref{sec
  unimodal} we prove that $\nu_\alpha$ is unimodal, and in the final
Section~\ref{sec_alpha_mod_0} we study the asymptotic behavior of
$\nu_\alpha$ as $\alpha\downarrow0$. The main results in
Sections~\ref{abs cont}-\ref{sec_alpha_mod_0} depend in part on some
basic properties of the functions $v_\alpha$ and $P_\alpha$, the
proofs of which are (not surprisingly) rather technical. In order to
maintain a steady flow in the paper, these proofs are deferred to
Appendix~\ref{tekniske beviser} at the end of the paper.

\section{Background}\label{prelim}

\subsection{Classical and free infinite divisibility}\label{PL free ID}

A (Borel-) probability measure $\mu$ on
${\mathbb R}$ is called infinitely divisible, if there exists, for each
positive integer $n$, a probability measure $\mu_n$ on $\R$, such that
\begin{equation}
\mu=\underbrace{\mu_n*\mu_n*\cdots*\mu_n}_{n \ \textrm{terms}},
\label{eqPL.1}
\end{equation}
where $*$ denotes the usual convolution of probability measures (based
on classical independence). We denote by $\ID(*)$ the class of all
such measures on $\R$. 

We recall that a probability measure $\mu$ on ${\mathbb R}$
is infinitely divisible, if and only if its
characteristic function (or Fourier transform)
$\hat{\mu}$ has the L\'evy-Khintchine representation:
\begin{equation}
\hat{\mu}(u)=\exp\Big[{\rm i}\eta u - {\textstyle\frac{1}{2}}au^2 + 
\int_{{\mathbb R}}\big({\rm e}^{{\rm i}ut}-1-{\rm i}ut 1_{[-1,1]}(t)\big) \
\rho({\rm d}t)\Big], \qquad (u\in{\mathbb R}),
\label{e0.10b}
\end{equation}
where $\eta$ is a real constant, $a$ is a non-negative constant and
$\rho$ is a L\'evy measure on ${\mathbb R}$, meaning that
\[
\rho(\{0\})=0, \quad \textrm{and} \quad \int_{{\mathbb R}}\min\{1,t^2\} \
\rho({\rm d}t)<\infty.
\]
The parameters $a$, $\rho$ and $\eta$
are uniquely determined by $\mu$ and the triplet $(a,\rho,\eta)$ is called the
{\it characteristic triplet} for $\mu$.

For two probability measures $\mu$ and $\nu$ on $\R$, the free
convolution $\mu\boxplus\nu$ is defined as the distribution of $x+y$,
where $x$ and $y$ are \emph{freely independent} (possibly unbounded)
selfadjoint operators on a Hilbert space with spectral distribution
$\mu$ and $\nu$, respectively (see \cite{BV} for further details).
The class $\ID(\boxplus)$ of infinitely divisible probability measures
with respect to free convolution $\boxplus$ is defined by replacing
classical convolution $*$ by free convolution $\boxplus$ in
\eqref{eqPL.1}. 

For a (Borel-) probability measure $\mu$ on $\R$ with support
$\supp(\mu)$, the Cauchy (or Stieltjes) transform is the mapping
$G_\mu\colon\C\setminus\supp(\mu)\to\C$ defined by: 
\begin{equation}
G_\mu(z)=\int_{\R}\frac{1}{z-t}\,\mu(\d t),
\qquad(z\in\C\setminus\supp(\mu)).
\label{eqPL.6}
\end{equation}
The \emph{free cumulant transform} $\CC_\mu$ of $\mu$ is then given by
\begin{equation}
\CC_\mu(z)=zG_\mu^\brinv(z)-1
\label{eqPL.6a}
\end{equation}
for all $z$ in a certain region $R$ of $\C^-$ (the lower half complex
plane), where the (right) inverse 
$G_\mu^\brinv$ of $G_\mu$ is well-defined. Specifically $R$ may be
chosen in the form:
\[
R=\{z\in\C^-\mid \tfrac{1}{z}\in\Delta_{\eta,M}\}, \quad\text{where}\quad
\Delta_{\eta,M}=\{z\in\C^+\mid |\re(z)|<\eta\im(z), \ \im(z)>M\}
\]
for suitable positive numbers $\eta$ and $M$.
It was proved in \cite{BV} (see also
\cite{ma} and \cite{vo2}) that
$\CC_\mu$ constitutes the free analog of $\log
f_\mu$ in the sense that it linearizes free convolution:
\[
\CC_{\mu\boxplus\nu}(z)=\CC_\mu(z)+\CC_{\nu}(z)
\]
for all probability measures $\mu$ and $\nu$ on $\R$ and all $z$ in a
region where all three transforms are defined. 
The results in \cite{BV} are presented in terms of a variant,
$\phi_\mu$, of $\CC_\mu$, which is often referred to as the Voiculescu
transform, and which is again a variant of the $R$-transform $R_\mu$
introduced in \cite{vo2}. The relationship is the following: 
\begin{equation}
\phi_{\mu}(z)=\CR_\mu(\tfrac{1}{z})=z\CC_{\mu}(\tfrac{1}{z})
\label{eqPL.6b}
\end{equation}
for all $z$ in a region $\Delta_{\eta,M}$ as above.
In \cite{BV} it was
proved additionally that $\mu\in\ID(\boxplus)$, if and only if
there exists $a$ in $[0,\infty)$, $\eta$ in $\R$ and a L\'evy
  measure $\rho$, such that $\CC_\mu$ has the \emph{free
    L\'evy-Khintchine representation}:
\begin{equation}
\mathcal{C}_{\mu}(z) = \eta z+ az^2 + 
\int_{{\mathbb R}}\Big(\frac{1}{1-tz}-1-tz1_{[-1,1]}(t)\Big)
\ \rho({\rm d}t).
\label{eqPL.2}
\end{equation}
(cf.\ also \cite{bt02b}).
In particular it follows for $\mu$ in $\ID(\boxplus)$
that $\CC_\mu$ can be extended to an
analytic map (also denoted $\CC_\mu$) defined on all of $\C^-$.
The triplet $(a,\rho,\eta)$ is uniquely determined and is
called the {\it free characteristic triplet} for $\mu$.

It was proved in
\cite[Proposition~5.12]{BV} that any measure $\nu$ in $\ID(\boxplus)$ has at
most one atom. In fact the proof of that proposition reveals that an atom $a$ 
for $\nu$ is necessarily equal to the non-tangential limit of
$\phi_{\nu}(z)$ as $z\to0$, $z\in\C^+$. We say that a function
$u\colon\C^+\to\C$ has a non-tangential limit $\ell$ at 0, if for any
positive number $\delta$ we have that
\begin{equation}
\ell=\lim_{z\to0,z\in\triangle_\delta}u(z),
\quad\text{where}\quad
\triangle_\delta=\{z\in\C^+\mid \im(z)>\delta|\re(z)|\}.
\label{eq1.1}
\end{equation}
In order to derive non-tangential limits, the following lemma
(Lemma~5.11 in \cite{BV}) is extremely useful:

\begin{lemma}[\cite{BV}]\label{BV key lemma}
Let $u\colon\C^+\to\C^+$ be an analytic function, and let $\Gamma$ be
a curve in $\C^+$ which approaches 0 non-tangentially. 

If $\lim_{z\to0, z\in\Gamma}u(z)=\ell$, then $\lim_{z\to0,
  z\in\triangle_\delta}u(z)=\ell$ for any positive number $\delta$,
i.e., $u$ has non-tangential limit $\ell$ at $0$.
\end{lemma}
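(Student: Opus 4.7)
The plan is to reduce the lemma to the classical Lindel\"of theorem for bounded analytic functions on the unit disc, which asserts that if $h\colon\DD\to\C$ is bounded and analytic and admits a limit $m$ along some continuous curve in $\DD$ ending at a boundary point $\zeta_0$, then $h$ has non-tangential limit $m$ at $\zeta_0$. The function $u$ itself need not be bounded, but since it takes values in $\C^+$ a Cayley transform on both sides pulls the problem into the disc setting, where Lindel\"of applies directly.

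Concretely, let $\psi\colon\C^+\to\DD$ denote the Cayley transform $\psi(w)=(w-\ri)/(w+\ri)$; it extends to a homeomorphism of $\overline{\C^+}\cup\{\infty\}$ onto $\overline{\DD}$ sending $0$ to $-1$ and $\infty$ to $1$. First I would form the analytic self-map of the disc
\[
h:=\psi\circ u\circ\psi^{-1}\colon\DD\to\DD,
\]
which is automatically bounded by $1$. The image $\psi(\Gamma)$ is a continuous curve in $\DD$ ending at $-1$, and the hypothesis on $\Gamma$ translates into $\lim_{w\to -1,\,w\in\psi(\Gamma)}h(w)=\psi(\ell)$, with the convention $\psi(\ell)=1$ when $\ell=\infty$. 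Lindel\"of's theorem then delivers the non-tangential limit $\psi(\ell)$ of $h$ at $-1$.

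The remaining step is to transport this conclusion back to $\C^+$. Here one uses the geometric fact that $\psi$ sends each cone $\triangle_\delta$, truncated near $0$, into some Stolz angle at $-1$ whose aperture depends only on $\delta$, and conversely every Stolz angle at $-1$ contains the image of such a truncated cone. Combined with the non-tangential convergence of $h$ at $-1$, this correspondence yields $\lim_{z\to 0,\,z\in\triangle_\delta}u(z)=\ell$ for every $\delta>0$, which is the desired conclusion.

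The analytic content of the lemma sits entirely inside Lindel\"of's theorem; the only real obstacle is verifying the compatibility of ``non-tangential at $0$ in $\C^+$'' with ``non-tangential at $-1$ in $\DD$'' under the M\"obius map $\psi$. This amounts to a routine but slightly fiddly computation with $\psi$ and $\psi^{-1}$, which has to be tracked carefully enough to give the required uniformity in $\delta$. Note also that the hypothesis that $\Gamma$ approaches $0$ non-tangentially is not needed for the argument (Lindel\"of allows an arbitrary continuous approach curve), so the statement as phrased is in fact a slight weakening of what the proof delivers.
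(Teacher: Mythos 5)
The paper does not prove this lemma at all: it is quoted verbatim as Lemma~5.11 of \cite{BV}, so there is no internal proof to compare against. Your argument is correct and is essentially the standard one behind the cited result: conjugating by the Cayley transform $\psi(w)=(w-\ri)/(w+\ri)$ turns $u$ into a bounded analytic self-map of $\DD$, classical Lindel\"of then gives the non-tangential limit $\psi(\ell)$ at $-1$, and the transfer back is unproblematic because $\psi$ is conformal at $0$ with $\psi'(0)=-2\ri\neq0$, so each truncated cone $\triangle_\delta\cap\{|z|<r\}$ is carried into a Stolz angle at $-1$ of aperture controlled by $\delta$. Your closing remark is also accurate: Lindel\"of needs only an arbitrary approach curve, so the non-tangential hypothesis on $\Gamma$ in the statement is not actually used.
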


\subsection{The Bercovici-Pata bijection}

In \cite{bp2} Bercovici and Pata introduced a bijection
$\Lambda$ between the two classes $\ID(*)$ and $\ID(\boxplus)$, which
may formally be defined as the mapping sending a measure $\mu$ from
$\ID(*)$ with characteristic triplet $(a,\rho,\eta)$ onto the measure
$\Lambda(\mu)$ in $\ID(\boxplus)$ with \emph{free} characteristic triplet
$(a,\rho,\eta)$. It is then obvious that $\Lambda$ is a bijection, and
it turns out that $\Lambda$ further enjoys the following properties (see
\cite{bp2} and \cite{bt1}): 

\begin{enumerate}[a]

\item If $\mu_1,\mu_2\in{\mathcal{ID}}(*)$, then
  $\Lambda(\mu_1*\mu_2)=\Lambda(\mu_1)\boxplus\Lambda(\mu_2)$.

\item If $\mu\in{\mathcal{ID}}(*)$ and $c\in{\mathbb R}$, then
  $\Lambda(D_c\mu)=D_c\Lambda(\mu)$, where e.g.\ $D_c\mu$ is the
  transformation of $\mu$ by the mapping $x\mapsto cx\colon\R\to\R$.

\item For any constant $c$ in ${\mathbb R}$ we have
  $\Lambda(\delta_c)=\delta_c$, where $\delta_c$ denotes Dirac
  measure at $c$.

\item $\Lambda$ is a homeomorphism with respect to weak convergence.

\end{enumerate}

Most of these properties can be established rather easily from the
following convenient formula:
\begin{equation}
\CC_{\Lambda(\mu)}(\ri z)=\int_0^{\infty}\log\hat{\mu}(zx)\e^{-x}\6x,
\qquad(z\in(-\infty,0), \ \mu\in\ID(*)),
\label{eqPL.7}
\end{equation}
which was derived in \cite{BNT}. The properties (a)-(c) imply that
$\Lambda$ preserves e.g.\ the classes of stable and selfdecomposable
measures. Specifically, let $\CP$ denote the class of all (Borel-)
probability measures on 
$\R$, and recall then that a measure $\mu$ from $\CP$ is called
\emph{stable}, if it satisfies the condition:
\begin{equation}
\forall \alpha,\alpha'>0\ \exists \alpha''>0\
\exists \beta\in\R\colon
D_\alpha\mu*D_{\alpha'}\mu=D_{\alpha''}\mu*\delta_{\beta}.
\label{eqPL.3}
\end{equation}
Recall also that $\mu$ is \emph{selfdecomposable}, if
\begin{equation}
\forall c\in(0,1)\ \exists\mu_c\in\CP\colon\mu=D_c\mu*\mu_c.
\label{eqPL.4}
\end{equation}
Denoting by $\CS(*)$ and $\CL(*)$ the classes of stable and
selfdecomposable measures, respectively, it is well-known 
(see e.g.\ \cite{Sa}) that
$\CS(*)\subseteq\CL(*)\subseteq\ID(*)$. The classes $\CS(\boxplus)$
and $\CL(\boxplus)$ are defined be replacing classical convolution $*$
by free convolution $\boxplus$ in \eqref{eqPL.3}-\eqref{eqPL.4}
above. It was shown in \cite{BV} and \cite{bt1} that
$\CS(\boxplus)\subseteq\CL(\boxplus)\subseteq\ID(\boxplus)$. By
application of properties (a)-(c) above, it follows then easily that
\begin{equation}
\Lambda(\CS(*))=\CS(\boxplus), \qand \Lambda(\CL(*))=\CL(\boxplus)
\label{eqPL.5}
\end{equation}
(see \cite{BV} and \cite{bt1}). 
The measures in $\CS(*)$ may alternatively by characterized as those
measures in $\ID(*)$ whose L\'evy measure has the form
\[
\rho(\d t)=
\big(c_-|t|^{-1-a_-}1_{(-\infty,0)}(t)+c_+t^{-1-a_+}1_{(0,\infty)}(t)\big)\6t
\]
for suitable numbers $c_+,c_-$ in $[0,\infty)$ and $a_+,a_-$ in
  $(0,2)$. Similarly $\CL(*)$ may be characterized as
  the class of measures in $\ID(*)$ with L\'evy measures in the
  form: $\rho(\d t)=|t|^{-1}k(t)\6t$, where $k\colon\R\setminus\{0\}\to\R$
  is increasing on $(-\infty,0)$ and decreasing on $(0,\infty)$.
By the definition of $\Lambda$ and \eqref{eqPL.5} we have the exact
same characterizations of the measures in $\CS(\boxplus)$ and
$\CL(\boxplus)$, respectively, if we let the term ``L\'evy measure'' 
refer to the free L\'evy-Khinthcine representation
\eqref{eqPL.2} rather than the classical one \eqref{e0.10b}.

For any positive number $\alpha$, the classical Gamma distribution
$\mu_\alpha$ with parameter $\alpha$ (cf.\ \eqref{eqI.4}) has L\'evy
measure 
\begin{equation*}
\rho_\alpha(\d t)=\alpha t^{-1}\e^{-t}1_{(0,\infty)}(t)\6t,
\end{equation*}
and thus $\mu_\alpha\in\CL(*)\setminus\CS(*)$. The corresponding free
Gamma distribution, $\nu_\alpha=\Lambda(\mu_\alpha)$, satisfies
accordingly that
$\nu_\alpha\in\CL(\boxplus)\setminus\CS(\boxplus)$. As mentioned in
the introduction, the purpose of the present paper is to disclose
the main features of $\nu_\alpha$ for any $\alpha$ in $(0,\infty)$.

\subsection{Stieltjes inversion.}\label{PL Stieltjes inversion}

Let $\mu$ be a (Borel-) probability measure on $\R$, and consider its
cumulative distribution function:
\[
F_\mu(t)=\mu((-\infty,t]), \qquad(t\in\R),
\]
as well as its Lebesgue decomposition:
\[
\mu=\rho+\sigma,
\]
where the measures $\rho$ and $\sigma$ are, respectively, absolutely
continuous and singular with respect to Lebesgue measure $\lambda$ on
$\R$. It follows from De la Vall\'e Poussin's Theorem (see
\cite[Theorem IV.9.6]{Sak}) that $\rho$ and $\sigma$ may be identified
with the restrictions of $\mu$ to the sets
\[
D_1=\big\{x\in\R\bigm|\textstyle{\lim_{h\to0}}\tfrac{F_\mu(x+h)-F_\mu(x)}{h} 
\ \text{exists in $\R$}\big\}
\]
and
\[
D_\infty=\big\{x\in\R\bigm| \textstyle{\lim_{h\to0}}
\tfrac{F_\mu(x+h)-F_\mu(x)}{h}=\infty\big\},
\]
respectively. In addition we have that (see e.g.\ Theorem~3.23 and
Proposition~3.31 in \cite{Fo})
\[
\lambda(\R\setminus D_1)=0, \qand \rho(\d t)=F_\mu'(t)1_{D_1}(t)\6t,
\]
where, for any $t$ in $D_1$, $F_\mu'(t)$ denotes the derivative of
$F_\mu$ at $t$.

Consider now additionally the Cauchy (or Stieltjes) transform $G_\mu$
defined in \eqref{eqPL.6}. It follows then from general theory of
Poisson-Stieltjes integrals (see \cite{Do}) that
\[
F_\mu'(x)=-\frac{1}{\pi}\lim_{y\downarrow0}\im(G_{\mu}(x+\ri y))
\qquad\text{for all $x$ in $D_1$},
\]
and that
\[
\lim_{y\downarrow0}\big|\im(G_{\mu}(x+\ri y))\big|=\infty
\qquad\text{for all $x$ in $D_\infty$}.
\]
In particular we may conclude that the singular part $\sigma$ of $\mu$
is concentrated on the set
\begin{equation*}
\big\{x\in\R\bigm| 
\textstyle{\lim_{y\downarrow0}|G_{\mu}(x+\ri y)|
=\infty\big\}}
\end{equation*}
(see also Chapter~XIII in \cite{RS}).

\section{Absolute continuity of $\nu_\alpha$}\label{abs cont}

In this section we establish absolute continuity of the free Gamma
distributions $\nu_\alpha$, $\alpha>0$, and prove the formula
\eqref{eqI.1} for the densities. Our starting point is the derivation
of the formula \eqref{eqI.2}, and we introduce for that purpose the
function $H_\alpha\colon\C\setminus[0,\infty)\to\C$ given by 
\begin{equation}
H_\alpha(z)=z+z\alpha G_{\mu_1}(z)
=z+z\alpha\int_0^\infty\frac{\e^{-t}}{z-t}\6t
=z+\alpha+\alpha\int_0^\infty\frac{t\e^{-t}}{z-t}\6t
\label{eq3.12}
\end{equation}
for $z$ in $\C\setminus[0,\infty)$.
By differentiation under the integral sign, note that $H_\alpha$ is
analytic on $\C\setminus[0,\infty)$ with derivatives given by
\begin{equation}
\begin{split}
H_\alpha'(z)&=1-\alpha\int_0^\infty\frac{t\e^{-t}}{(z-t)^2}\6t,
\qquad(z\in\C\setminus[0,\infty)),
\\[.2cm]
H_\alpha^{(k)}(z)&=(-1)^k\alpha k!\int_0^\infty\frac{t\e^{-t}}{(z-t)^{k+1}}\6t,
\qquad(z\in\C\setminus[0,\infty), \ k\in\{2,3,4,\ldots\}).
\label{eq3.9}
\end{split}
\end{equation}
In the following we consider in addition the function
$F\colon\C\setminus[0,\infty)\to(0,\infty)$ given by
\begin{equation}
F(x+\ri y)=\int_0^\infty\frac{t\e^{-t}}{|x+\ri y-t|^2}\6t
=\int_0^{\infty}\frac{t\e^{-t}}{(x-t)^2+y^2}\6t 
\label{eq3.0}
\end{equation}
for all $x,y\in\R$ such that $x+\ri y\in\C\setminus[0,\infty)$.

\pagebreak

\begin{lemma}\label{intro v-alpha}
Let $\alpha$ be a positive number.

\begin{enumerate}[i]

\item There exists a unique positive real number $c_\alpha$ such that
\begin{equation}
\frac{1}{\alpha}=F(-c_\alpha)
=\int_0^{\infty}\frac{t\e^{-t}}{(c_\alpha+t)^2}\6t.
\label{eq3.10}
\end{equation}
The number $c_\alpha$ increases with $\alpha$, and satisfies that
\[
\lim_{\alpha\to0}c_\alpha=0, \qand
\lim_{\alpha\to\infty}c_\alpha=\infty.
\]

\item  There is a function $v_\alpha\colon\R\to[0,\infty)$, such that
\begin{equation}
\{z\in\C^+\mid H_\alpha(z)\in\C^+\}
=\{x+\ri y\mid x,y\in\R, \ y>v_\alpha(x)\}.
\label{eq3.20}
\end{equation}
The function $v_\alpha$ is given by
\begin{align}
v_\alpha(x)&=0, \quad\text{if $x\in(-\infty,-c_{\alpha}]$,}
\label{eq3.6a}
\\[.2cm]
F(x+\ri v_\alpha(x))&=\frac{1}{\alpha}, \quad\text{if
  $x\in(-c_{\alpha},\infty)$.} 
\label{eq3.6}
\end{align}

\item For all $x$ in $\R$ we have that $H_\alpha(x+\ri v_\alpha(x))\in\R$.

\item The function $v_\alpha$ satisfies that $v_\alpha(x)>0$ for all
  $x$ in $(-c_\alpha,\infty)$.

\end{enumerate}
\end{lemma}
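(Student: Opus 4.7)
The key algebraic observation, on which all four parts rest, is that for $z = x + \ri y$ with $y > 0$ the representation \eqref{eq3.12} gives
\[
\im H_\alpha(x + \ri y) = y\bigl(1 - \alpha F(x + \ri y)\bigr),
\]
so the set in \eqref{eq3.20} equals $\{(x, y) : y > 0, \ F(x + \ri y) < 1/\alpha\}$. The whole lemma therefore reduces to studying sublevel sets of $F$, which I would approach via two monotonicity facts: one in $c$ for the map $g(c) := F(-c)$ with $c > 0$, and one in $y$ for $F(x + \ri y)$ with $x$ fixed.

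For (i), differentiation under the integral yields $g'(c) = -2\int_0^\infty t\e^{-t}/(c+t)^3\,\d t < 0$, so $g$ is strictly decreasing on $(0,\infty)$. Monotone convergence gives $g(c) \to \int_0^\infty \e^{-t}/t\,\d t = +\infty$ as $c \downarrow 0$, while $g(c) \to 0$ as $c \to \infty$. Hence $c_\alpha := g^{-1}(1/\alpha)$ is uniquely determined, and monotonicity of $\alpha \mapsto c_\alpha$ together with the stated limits is immediate from the corresponding properties of $g^{-1}$. I would also record, for later use, the induced behaviour of $F$ on the real axis: $F(x) < 1/\alpha$ for $x < -c_\alpha$, $F(-c_\alpha) = 1/\alpha$, and $F(x) > 1/\alpha$ for $-c_\alpha < x < 0$, with $F(x) = +\infty$ the natural convention for $x \geq 0$.

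For (ii)--(iv), I fix $x \in \R$ and verify that $y \mapsto F(x + \ri y)$ is continuous and strictly decreasing on $(0, \infty)$ (again by differentiation under the integral), with $\lim_{y \to \infty} F(x + \ri y) = 0$. The decisive point is the boundary value at $y \downarrow 0$: it equals the finite number $F(x)$ when $x < 0$, but tends to $+\infty$ when $x \geq 0$. For $x > 0$ this comes from the lower bound $\int_{x-\delta}^{x+\delta} t\e^{-t}/((x-t)^2 + y^2)\,\d t \geq C(x,\delta)\cdot y^{-1}\arctan(\delta/y)$ on a small symmetric interval around $t = x$; for $x = 0$ it is monotone convergence to the divergent $\int_0^\infty \e^{-t}/t\,\d t$. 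Combined with (i), this gives: for $x \leq -c_\alpha$, $F(x+\ri y) < 1/\alpha$ for every $y > 0$, so $v_\alpha(x) = 0$ is forced; for $x > -c_\alpha$, the boundary limit strictly exceeds $1/\alpha$, and the intermediate value theorem produces a unique $v_\alpha(x) > 0$ with $F(x+\ri v_\alpha(x)) = 1/\alpha$. This simultaneously establishes (ii) and the positivity assertion (iv). Part (iii) then splits into cases: if $x \leq -c_\alpha$, then $H_\alpha(x)$ is real directly from \eqref{eq3.12} since $x < 0$; if $x > -c_\alpha$, substituting $F(x + \ri v_\alpha(x)) = 1/\alpha$ into the imaginary-part identity above yields $\im H_\alpha(x + \ri v_\alpha(x)) = 0$.

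The only technical point of any substance is the dichotomy at the threshold $x = 0$ for the boundary behaviour of $y \mapsto F(x+\ri y)$ — finite below, infinite at and above — which is what forces $-c_\alpha$ to lie strictly in $(-\infty, 0)$ and underwrites both (i) and (iv). Beyond that, the argument is routine monotonicity bookkeeping and the intermediate value theorem.
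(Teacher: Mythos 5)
Your proposal is correct and follows essentially the same route as the paper: the identity $\im H_\alpha(x+\ri y)=y\bigl(1-\alpha F(x+\ri y)\bigr)$, strict monotonicity of $F$ in $y$ (and of $F(-c)$ in $c$), the dichotomy in the boundary limit as $y\downarrow 0$ for $x<0$ versus $x\ge 0$, and the intermediate value theorem to define $v_\alpha(x)$. Your minor variations (differentiation under the integral for monotonicity, the $\arctan$ lower bound where the paper uses monotone convergence) do not change the argument in substance.
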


\bigskip
\noindent
\[
\includegraphics[height=8.0cm]{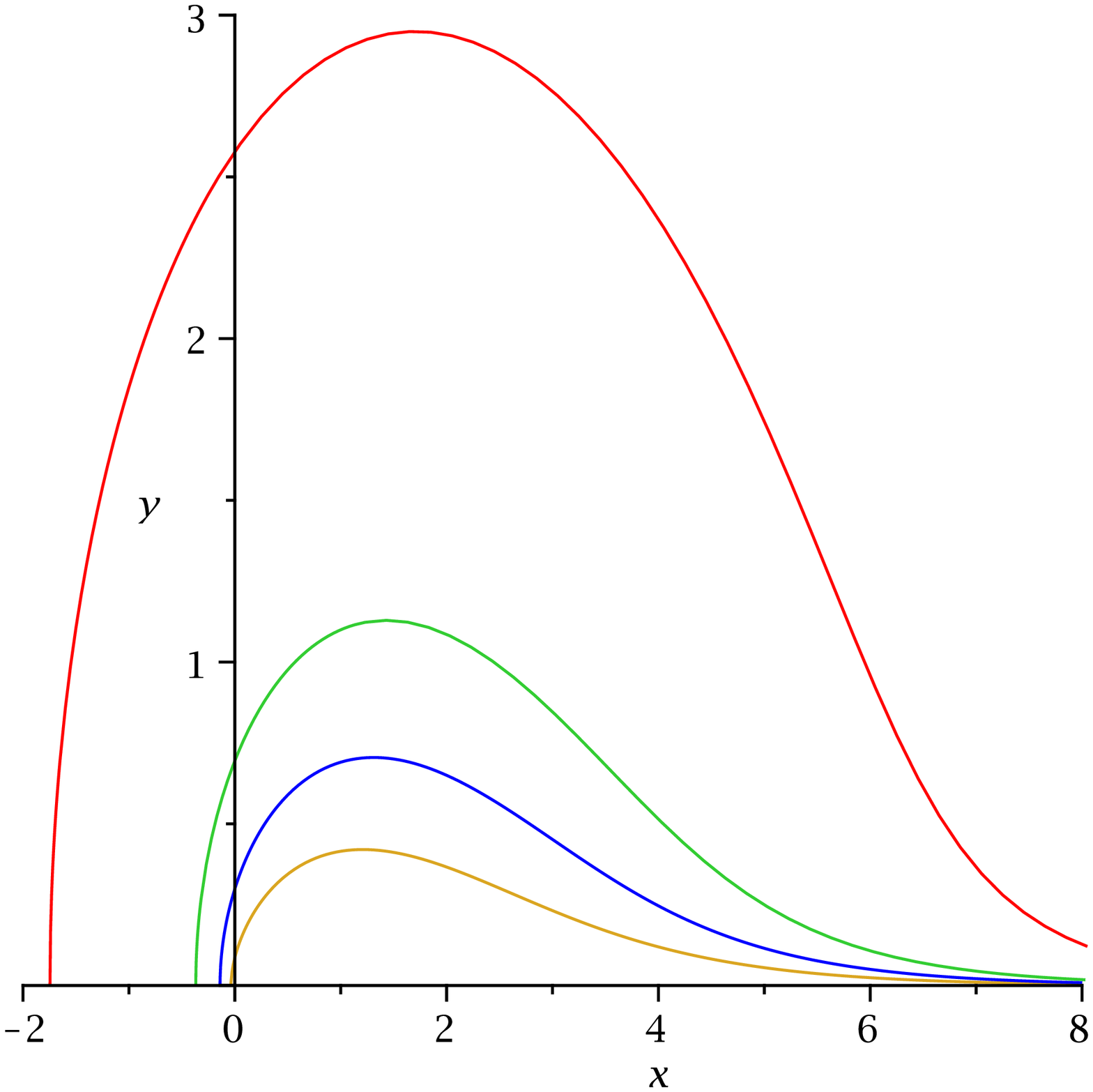}
\]
\begin{center}
{\footnotesize The graphs of the functions $v_{1/2}$, $v_1$, $v_2$ and
  $v_{10}$.}
\end{center}

\begin{proofof}[Proof of Lemma~\ref{intro v-alpha}.] 
(i) \ The function 
$x\mapsto F(-x)=\int_0^\infty\tfrac{t\e^{-t}}{(x+t)^2}\6t$ is
  clearly strictly decreasing and (by dominated
  convergence) continuous on $(0,\infty)$. Moreover, by monotone
  convergence, 
\[
\lim_{x\downarrow0}F(-x)=\infty, \qand \lim_{x\to\infty}F(-x)=0.
\]
Hence, there exists a unique number $c_\alpha$ in $(0,\infty)$ such
that $F(-c_\alpha)=\frac{1}{\alpha}$. The last assertions in (i) are
immediate from the last equality in \eqref{eq3.10}.

(ii) \ For $x,y$ in $\R$ such that $x+\ri y\in\C\setminus[0,\infty)$
  we find from formula \eqref{eq3.12} that
\begin{equation}
\begin{split}
\im(H_\alpha(x+\ri y))
&=y+\alpha\im\Big(\int_0^\infty\frac{t\e^{-t}}{x+\ri y-t}\6t\Big)
\\[.2cm]
&=y-\alpha y\int_0^\infty\frac{t\e^{-t}}{(x-t)^2+y^2}\6t
=y(1-\alpha F(x+\ri y)).
\label{eq3.21}
\end{split}
\end{equation}
For fixed $x$ in $\R$ the function $y\mapsto F(x+\ri
y)$ is clearly strictly decreasing on $(0,\infty)$, and $F(x+\ri y)\to0$
as $y\to\infty$. Moreover, by monotone
convergence,
\[
\lim_{y\downarrow0}F(x+\ri y)=
\begin{cases}
\infty, &\text{if $x\ge0$}\\
F(x), &\text{if $x<0$.}
\end{cases}
\]
Thus, if $x>-c_\alpha$, then $\lim_{y\downarrow0}F(x+\ri
y)>F(-c_\alpha)=\frac{1}{\alpha}$, and there exists a unique $y_x$ in
$(0,\infty)$ 
such that $F(x+\ri y_x)=\frac{1}{\alpha}$. Thus, if we put
$v_\alpha(x)=y_x$, then $\alpha F(x+\ri y)<1$ and hence
$\im(H_\alpha(x+\ri y))>0$ for all $y$ in $(v_\alpha(x),\infty)$. Similarly
$\im(H_\alpha(x+\ri y))<0$ for $y$ in $(0,v_\alpha(x))$.

If $x\le -c_\alpha$, then for all $y$ in $(0,\infty)$ we have that
$F(x+\ri y)<F(x)\le F(-c_\alpha)=\frac{1}{\alpha}$, and hence that
$\im(H_\alpha(x+\ri y))>0$. Thus, if we put $v_\alpha(x)=0$ for $x$ in
$(-\infty,-c_\alpha]$, it follows
altogether that $v_\alpha$ satisfies \eqref{eq3.20}, and that
$v_\alpha$ is given by \eqref{eq3.6a}-\eqref{eq3.6}.

(iii) follows immediately from \eqref{eq3.21} in combination with
\eqref{eq3.6a}-\eqref{eq3.6}, and (iv) is a consequence of the way
$v_\alpha$ was defined in the proof of (ii).
\end{proofof}

In continuation of Lemma~\ref{intro v-alpha} we introduce next the
the following notation:
\begin{align}
\CG_\alpha&=\{x+\ri v_\alpha(x)\mid x\in\R\}
\label{eq3.4}
\\[.2cm]
\CG_\alpha'&=\{x+\ri v_\alpha(x)\mid
x\in[-c_\alpha,\infty)\}=\CG_\alpha\setminus(-\infty,-c_\alpha).
\label{eq3.4a}
\\[.2cm]
\CG_\alpha^+&=\{x+\ri y\mid x,y\in\R, \ \text{and} \ y>v_\alpha(x)\}.
\label{eq3.4b}
\end{align}
Note in particular that $0\notin\CG_\alpha$ (since $v_\alpha(0)>0$),
and that $\CG_\alpha,\CG_\alpha^+\subseteq\C\setminus[0,\infty)$.

\begin{proposition}\label{key formel} Let $\alpha$ be a positive
  number, and consider the free Gamma distribution $\nu_\alpha$ with
  parameter $\alpha$. Consider further the classical Gamma
  distribution $\mu_1$ with parameter 1 (cf.\ \eqref{eqI.4}). 
  We then have (cf.\ formulae \eqref{eqPL.6} and
  \eqref{eqPL.6a})

\begin{enumerate}[i]

\item $\CC_{\nu_\alpha}(\frac{1}{z})=\alpha G_{\mu_1}(z)$ for all $z$
  in $\C^+$.

\item $G_{\nu_\alpha}(H_\alpha(z))=\frac{1}{z}$ for all $z$ in
  $\CG_\alpha^+$.

\end{enumerate}
\end{proposition}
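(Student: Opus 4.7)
The plan for (i) is to apply the Bercovici--Pata formula \eqref{eqPL.7} to $\mu_\alpha$ and then identify the resulting integral with $\alpha G_{\mu_1}$ via integration by parts. Since the classical Gamma characteristic function is $\hat{\mu}_\alpha(u)=(1-\ri u)^{-\alpha}$, formula \eqref{eqPL.7} gives, for $z<0$,
\[
\CC_{\nu_\alpha}(\ri z)=-\alpha\int_0^\infty\log(1-\ri zx)\e^{-x}\6x.
\]
Setting $z=-1/y$ with $y>0$ (so $\ri z=1/(\ri y)$), the integrand becomes $\log(1+\ri x/y)\e^{-x}$. Integration by parts with $u(x)=\log(1+\ri x/y)$ and $\6v=\e^{-x}\6x$ is routine: the logarithm vanishes at $x=0$ and grows only like $\log x$ as $x\to\infty$ (the principal branch is unambiguous because $1+\ri x/y$ lies in the first quadrant), so the boundary terms disappear, and after simplifying $\ri/(y+\ri x)=1/(x-\ri y)$ one obtains
\[
\CC_{\nu_\alpha}\bigl(\tfrac{1}{\ri y}\bigr)=\alpha\int_0^\infty\frac{\e^{-x}}{\ri y-x}\6x=\alpha G_{\mu_1}(\ri y).
\]
This is (i) on the positive imaginary axis. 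Both $\CC_{\nu_\alpha}(1/z)$ and $\alpha G_{\mu_1}(z)$ are analytic on the connected open set $\C^+$ ($\CC_{\nu_\alpha}$ extends analytically to all of $\C^-$ via the free L\'evy--Khintchine representation, and $G_{\mu_1}$ is analytic off $[0,\infty)$), so by the identity theorem (i) holds throughout $\C^+$.

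For (ii) I combine (i) with the defining relation $\CC_{\nu_\alpha}(\zeta)=\zeta G_{\nu_\alpha}^\brinv(\zeta)-1$, valid for $\zeta\in R\subseteq\C^-$. Specializing to $\zeta=1/w$ with $w\in\Delta_{\eta,M}$ (so that $1/w\in R$), rearrangement gives
\[
G_{\nu_\alpha}^\brinv\bigl(\tfrac{1}{w}\bigr)=w\bigl(1+\CC_{\nu_\alpha}(\tfrac{1}{w})\bigr)=w\bigl(1+\alpha G_{\mu_1}(w)\bigr)=H_\alpha(w),
\]
and applying $G_{\nu_\alpha}$ to both sides yields $G_{\nu_\alpha}(H_\alpha(w))=1/w$ on $\Delta_{\eta,M}$.

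It remains to extend this local identity to the full domain $\CG_\alpha^+$ by analytic continuation. For this I verify three ingredients: (a) both sides are analytic on $\CG_\alpha^+$ --- for $G_{\nu_\alpha}\circ H_\alpha$ this uses Lemma~\ref{intro v-alpha}(ii), which gives $H_\alpha(\CG_\alpha^+)\subseteq\C^+\subseteq\C\setminus\supp(\nu_\alpha)$, while for $w\mapsto 1/w$ it uses that $0\notin\CG_\alpha^+$, which follows from $v_\alpha(0)>0$ by Lemma~\ref{intro v-alpha}(iv); (b) $\CG_\alpha^+$ is connected, which follows from continuity of $v_\alpha$ (a routine implicit-function argument applied to \eqref{eq3.6}), since $\CG_\alpha^+$ is then the strict epigraph of a continuous function; (c) the elementary estimate $F(x+\ri y)\le y^{-2}\int_0^\infty t\e^{-t}\6t=y^{-2}$ shows $v_\alpha(x)<y$ whenever $y>\sqrt{\alpha}$, so any cone $\Delta_{\eta,M}$ with $M>\sqrt{\alpha}$ is contained in $\CG_\alpha^+$, providing a nonempty open overlap on which the identity already holds. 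The main obstacle, such as it is, is the integration-by-parts step in (i) --- checking the branch of the logarithm and the vanishing of the boundary terms --- while the extension argument in (ii) reduces to a careful bookkeeping of domains so that the standard identity theorem applies.
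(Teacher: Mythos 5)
Your proposal is correct and follows essentially the same route as the paper: establish (i) on the imaginary axis via formula \eqref{eqPL.7} and extend by analytic continuation, then obtain (ii) from the defining relation \eqref{eqPL.6a} on a region $\Delta_{\eta,M}$ and continue analytically to $\CG_\alpha^+$. The only differences are cosmetic: you evaluate the integral in (i) from the closed form $\hat{\mu}_\alpha(u)=(1-\ri u)^{-\alpha}$ by integration by parts, where the paper uses the L\'evy--Khintchine form of $\log\hat{\mu}_\alpha$ and Fubini, and your explicit check (via $F(x+\ri y)\le y^{-2}$, connectedness of $\CG_\alpha^+$, and analyticity of $G_{\nu_\alpha}\circ H_\alpha$) makes precise the continuation step that the paper leaves implicit.
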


\begin{proof} (i) \ The classical Gamma distribution $\mu_\alpha$
has characteristic function 
\begin{equation}
\hat{\mu}_\alpha(u)=\exp\Big(\alpha\int_0^{\infty}\big(\e^{\ri
  ut}-1\big)\frac{\e^{-t}}{t}\6t\Big),  
\qquad(u\in\R),
\label{eq3.19a}
\end{equation}
(see e.g.\ \cite[Example~8.10]{Sa}).
By formula \eqref{eqPL.7} and Fubinis Theorem it follows then for
any $u$ in $(-\infty,0)$ that
\begin{equation*}
\begin{split}
\CC_{\nu_\alpha}(\ri u)&=\int_0^{\infty}\log\hat{\mu}_\alpha(ux)\e^{-x}\6x
=\alpha\int_0^{\infty}
\Big(\int_0^{\infty}(\e^{\ri uxt}-1)\frac{\e^{-t}}{t}\6t\Big)\e^{-x}\6x 
\\[.2cm]
&=\alpha\int_0^{\infty}\frac{\e^{-t}}{t}\Big(\frac{1}{1-\ri ut}-1\Big)\6t
=\alpha\ri u\int_0^{\infty}\frac{\e^{-t}}{1-\ri ut}\6t.
\end{split}
\end{equation*}
Setting $u=-\frac{1}{y}$, we find for any $y$ in $(0,\infty)$ that
\begin{equation*}
\CC_{\nu_\alpha}(\tfrac{1}{\ri y})
=\alpha\int_0^{\infty}\frac{\e^{-t}}{\ri y(1-\frac{t}{\ri y})}\6t
=\alpha\int_0^{\infty}\frac{\e^{-t}}{\ri y-t}\6t
=\alpha G_{\mu_1}(\ri y),
\end{equation*}
and by analytic continuation we conclude that
$\CC_{\nu_\alpha}(1/z)=\alpha G_{\mu_1}(z)$ for all $z$ in $\C^+$.

(ii) \ Recall from the definition of $\CC_{\nu_\alpha}$ (see
Subsection~\ref{PL free ID}) that
\[
\CC_{\nu_\alpha}(\tfrac{1}{z})
=\tfrac{1}{z}G_{\nu_\alpha}^{\<-1\>}(\tfrac{1}{z})-1
\]
for all $z$ in a region of the form $\Delta_{\eta,M}=\{z\in\C^+\mid
|\re(z)|<\eta\im(z), \ \im(z)>M\}$ for suitable positive numbers
$\eta$ and $M$. Taking (i) into account, we find that
\begin{equation}
G_{\nu_\alpha}^{\<-1\>}(\tfrac{1}{z})=z+z\alpha G_{\mu_1}(z)=H_\alpha(z),
\quad\text{and hence}\quad
\frac{1}{z}=G_{\nu_\alpha}(H_\alpha(z))
\label{eq3.19}
\end{equation}
for all $z$ in $\Delta_{\eta,M}$. Since $G_{\nu_\alpha}$ and
$H_{\alpha}$ are analytic on $\C^+$, it follows from
Lemma~\ref{intro v-alpha}(ii) and analytic continuation that the
latter equation in \eqref{eq3.19} holds for all $z$ in $\CG_\alpha^+$. 
This completes the proof.
\end{proof}

In order to combine Proposition~\ref{key formel}(ii) with the method
of Stieltjes inversion (and Lemma~\ref{BV key lemma}), we need some
further preparations, which  are presented in the series of lemmas to follow.

\begin{lemma}\label{H' ikke 0} 
For any positive number $\alpha$ we have that
\[
H_\alpha'(-c_\alpha)=0, \qand H'_\alpha(z)\ne0 \ \text{for all $z$ in
  $\CG_\alpha'\setminus\{-c_\alpha\}$}. 
\]
In fact,
\[
\re(H_\alpha'(x+\ri v_\alpha(x)))=-\alpha v_\alpha(x)
\frac{\partial}{\partial y}F(x+\ri v_\alpha(x))>0
\]
for all $x$ in $(-c_\alpha,\infty)$.
\end{lemma}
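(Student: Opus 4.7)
The plan is to handle the two assertions by direct computation with the integral representation of $H_\alpha'$ in \eqref{eq3.9}, treating the point $z=-c_\alpha$ and the points $x+\ri v_\alpha(x)$ with $x>-c_\alpha$ separately.

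For the point $-c_\alpha$, note that $v_\alpha(-c_\alpha)=0$ by \eqref{eq3.6a}, so $-c_\alpha\in\R\setminus[0,\infty)$ and the first formula in \eqref{eq3.9} applies. Substituting $z=-c_\alpha$ turns $(z-t)^2$ into $(c_\alpha+t)^2$, and the resulting integral is precisely $F(-c_\alpha)$. By the defining equation \eqref{eq3.10} of $c_\alpha$ this integral equals $1/\alpha$, and we read off $H_\alpha'(-c_\alpha)=1-\alpha F(-c_\alpha)=0$.

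For $x\in(-c_\alpha,\infty)$ the main step is to extract $\re(H_\alpha'(x+\ri v_\alpha(x)))$. With $z=x+\ri y$ and $y=v_\alpha(x)$, I would write $1/(z-t)^2=\overline{(z-t)}^{\,2}/|z-t|^4$, so that the real part of the kernel $t\e^{-t}/(z-t)^2$ has numerator $t\e^{-t}\bigl((x-t)^2-y^2\bigr)$ and denominator $\bigl((x-t)^2+y^2\bigr)^2$. The elementary identity
\[
(x-t)^2-y^2=\bigl((x-t)^2+y^2\bigr)-2y^2
\]
splits $\re(H_\alpha'(z))$ into two parts: one equals $1-\alpha F(z)$, which vanishes by \eqref{eq3.6}, and the other equals $2\alpha y^2\int_0^\infty \frac{t\e^{-t}}{((x-t)^2+y^2)^2}\6t$. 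Differentiating \eqref{eq3.0} under the integral sign identifies the latter integral with $-\tfrac{1}{2y}\partial_yF(x+\ri y)$, yielding the claimed formula $\re(H_\alpha'(z))=-\alpha v_\alpha(x)\,\partial_yF(x+\ri v_\alpha(x))$.

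The strict positivity is then immediate: the displayed derivative formula for $\partial_yF$ makes it visibly negative for $y>0$, and $v_\alpha(x)>0$ on $(-c_\alpha,\infty)$ by part (iv) of Lemma~\ref{intro v-alpha}. In particular $\re(H_\alpha'(z))>0$ on $\CG_\alpha'\setminus\{-c_\alpha\}$, forcing $H_\alpha'(z)\ne 0$ there. No serious obstacle is expected; the only care required is the routine justification of differentiation under the integral, which follows from the exponential decay of $t\e^{-t}$ and the local boundedness of $v_\alpha$ away from $0$ on compacta of $(-c_\alpha,\infty)$.
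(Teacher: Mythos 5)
Your proof is correct and rests on the same elementary mechanism as the paper's: the identity $\re\big(H_\alpha'(x+\ri v_\alpha(x))\big)=(1-\alpha F)-\alpha v_\alpha(x)\,\frac{\partial}{\partial y}F$ with the first term killed by the defining equation \eqref{eq3.6} of $v_\alpha$, the second term strictly positive because $\frac{\partial}{\partial y}F(x+\ri y)<0$ for $y>0$ and $v_\alpha>0$ on $(-c_\alpha,\infty)$ by Lemma~\ref{intro v-alpha}(iv), and the value $H_\alpha'(-c_\alpha)=1-\alpha F(-c_\alpha)=0$ read off from \eqref{eq3.9} and \eqref{eq3.10} exactly as in the paper. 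The only difference is cosmetic: the paper derives the key identity via the Cauchy--Riemann equations, writing $\re(H_\alpha')=\frac{\partial}{\partial y}\im(H_\alpha)$ and reusing $\im(H_\alpha(x+\ri y))=y(1-\alpha F(x+\ri y))$ from \eqref{eq3.21}, whereas you take real parts directly in the integral representation of $H_\alpha'$ and split the kernel with $(x-t)^2-y^2=\big((x-t)^2+y^2\big)-2y^2$; both routes need only the same routine differentiation under the integral sign, so your argument is a faithful, equally valid rendering of the paper's proof.
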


\begin{proof} Note first that by \eqref{eq3.9}-\eqref{eq3.10} we
 have that
\begin{equation*}
H_\alpha'(-c_\alpha)=1-\alpha\int_0^{\infty}\frac{t\e^{-t}}{(c_\alpha+t)^2}\6t
=1-\alpha F(-c_\alpha)=0. 
\end{equation*}
For $z=x+\ri y$ in $\C\setminus[0,\infty)$ we find next, by application of the
Cauchy-Riemann equations and \eqref{eq3.21}, that
\begin{equation*}
\begin{split}
\re(H'_\alpha(z))&=\frac{\partial}{\partial x}\re(H_\alpha(z))
=\frac{\partial}{\partial y}\im(H_\alpha(z))
\\[.2cm]
&=\frac{\partial}{\partial y}\big(y(1-\alpha F(z))
=(1-\alpha F(z))-\alpha y\frac{\partial}{\partial y}F(z).
\end{split}
\end{equation*}
For any $x$ in $(-c_\alpha,\infty)$ it follows thus from
\eqref{eq3.6} that 
\[
\re(H'_\alpha(x+\ri v_\alpha(x))=0
-\alpha v_\alpha(x)\frac{\partial}{\partial y}F(x+\ri v_\alpha(x)).
\]
The proof if concluded by noting that differentiation with respect to
$y$ in \eqref{eq3.0} leads to
\[
\frac{\partial}{\partial y}F(x+\ri y)
=-2y\int_0^{\infty}\frac{t\e^{-t}}{((x-t)^2+y^2)^2}\6t,
\]
where the right hand side is strictly negative whenever $y>0$.
\end{proof}

In the following lemma we collect some further properties of the
function $v_\alpha$, that will be needed in various parts of the
remainder of the paper. We defer the rather technical proof to
Appendix~\ref{tekniske beviser}.

\begin{lemma}\label{egensk v-alpha}
Let $\alpha$ be a positive number and consider the
  function $v_\alpha\colon\R\to[0,\infty)$ given by
      \eqref{eq3.6a}-\eqref{eq3.6}. Then $v_\alpha$ has the following
      properties: 

\begin{enumerate}[i]

\item $v_\alpha$ is continuous on $\R$ and analytic on
  $\R\setminus\{-c_\alpha\}$.

\item
  $\displaystyle{\lim_{x\to\infty}\frac{v_{\alpha}(x)}{x\e^{-x}}=\alpha\pi}$.

\item For any positive numbers $\delta,\gamma$ there
  exists a positive number $\alpha_0$ such that
\[
\sup_{x\in[\delta,\infty)}
\Big|\frac{v_\alpha(x)}{\alpha}-\pi x\e^{-x}\Big|\le\gamma,
\quad\text{whenever $\alpha\in(0,\alpha_0]$}.
\]

\end{enumerate}
\end{lemma}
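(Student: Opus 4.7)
The plan is to use the defining equation $F(x+\ri v_\alpha(x))=1/\alpha$ (for $x>-c_\alpha$) throughout, exploiting the Poisson-kernel structure of $F$ for parts (ii) and (iii). Writing $f(t)=t\e^{-t}1_{(0,\infty)}(t)$ and $P_y(u)=y/(\pi(u^2+y^2))$, the equation becomes $v_\alpha(x)/(\alpha\pi)=(P_{v_\alpha(x)}*f)(x)$.

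For (i), the function $(x,y)\mapsto F(x+\ri y)$ is real-analytic on $\R\times(0,\infty)$, and by Lemma~\ref{H' ikke 0} we have $\partial_y F(x+\ri v_\alpha(x))\ne 0$ for $x>-c_\alpha$. The real-analytic implicit function theorem then yields real-analyticity of $v_\alpha$ on $(-c_\alpha,\infty)$; on $(-\infty,-c_\alpha)$ it is identically zero. Continuity at $-c_\alpha$ follows from the crude bound $F(x+\ri y)\le 1/y^2$ (so $v_\alpha$ remains bounded near $-c_\alpha$) combined with a standard compactness argument using strict monotonicity of $y\mapsto F(x+\ri y)$ to rule out positive accumulation values at $-c_\alpha$.

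For (ii) and (iii), the key is the asymptotic expansion
\[
F(x+\ri v)=\frac{\pi x\e^{-x}}{v}+O\big(x\e^{-x}\log(1/v)+1/x\big)\qquad(\text{$x$ large, $v\in(0,1]$}),
\]
obtained by the substitution $s=t-x$ in the defining integral, separating the dominant contribution near $s=0$ (via $\int \6s/(s^2+v^2)=\pi/v$) from the tails $|s|>\eta$ (handled by integration by parts). Two a priori bounds on $v_\alpha$ are also needed: the uniform upper bound $v_\alpha(x)\le\alpha\pi/\e$ (immediate from $(P_{v_\alpha}*f)(x)\le\|f\|_\infty=1/\e$), and a matching lower bound $v_\alpha(x)\ge c\alpha x\e^{-x}$ on $[\delta,\infty)$ for $\alpha$ small (from the elementary estimate $(P_v*f)(x)\ge(1/2)\min_{[x-1,x+1]}f\ge c' f(x)$ valid when $v\le 1$). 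Substituting the expansion into $\alpha F(x+\ri v_\alpha(x))=1$ gives, with $w_\alpha(x)=v_\alpha(x)/(\alpha\pi x\e^{-x})$, the error estimate
\[
\big|1-1/w_\alpha(x)\big|=O\big(\alpha x\e^{-x}\log(1/v_\alpha(x))+\alpha/x\big).
\]

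For (ii), fixing $\alpha$ and letting $x\to\infty$, the lower bound yields $\log(1/v_\alpha(x))=O(x)$ so the right-hand side is $O(\alpha x^2\e^{-x}+\alpha/x)\to 0$, giving $v_\alpha(x)/(x\e^{-x})\to\alpha\pi$. For (iii), the uniform bounds on $[\delta,\infty)$ give $\log(1/v_\alpha(x))=O(x+|\log\alpha|)$, so the error is $O(\alpha|\log\alpha|)$ uniformly, whence
\[
\sup_{x\in[\delta,\infty)}\Big|\frac{v_\alpha(x)}{\alpha}-\pi x\e^{-x}\Big|=\pi\sup_{x\in[\delta,\infty)}x\e^{-x}\,|w_\alpha(x)-1|=O(\alpha|\log\alpha|),
\]
which tends to $0$ as $\alpha\downarrow 0$, proving (iii) upon choosing $\alpha_0$ small enough. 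The main technical obstacle is deriving the sharp asymptotic expansion of $F$ with error terms uniformly controllable in both $x$ and $\alpha$; securing the uniform lower bound on $v_\alpha$ (essential to bound $\log(1/v_\alpha)$ appropriately) is the other delicate point, but it follows from elementary Poisson-kernel estimates once $v_\alpha\le 1$ is ensured.
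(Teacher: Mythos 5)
Your proposal is correct in substance, and its part (i) coincides with the paper's argument (real-analyticity of $(x,y)\mapsto F(x+\ri y)$, strict negativity of $\partial_yF$, implicit function theorem, plus an easy argument for continuity at $-c_\alpha$). For (ii) and (iii) you take a recognizably different route in organization, though the underlying mechanism is the same localization of the defining integral near $t=x$: the paper sandwiches $\int_{x-\epsilon}^{x+\epsilon}\frac{t\e^{-t}}{(x-t)^2+v_\alpha(x)^2}\6t$ between two arctan expressions (Lemma~\ref{egensk v-alpha II}), proves (ii) by a liminf/limsup squeeze in $\epsilon$, and proves (iii) by redoing these bounds uniformly in $x\in[\epsilon,\infty)$ and small $\alpha$ (the estimates \eqref{addeq2}--\eqref{addeq5}), never needing a lower bound on $v_\alpha$ beyond what the arctan factors supply. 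You instead recast the defining equation as a Poisson-integral identity $v_\alpha(x)/(\alpha\pi)=(P_{v_\alpha(x)}*f)(x)$, prove the clean a priori bounds $v_\alpha\le\pi\alpha/\e$ and $v_\alpha\gtrsim\alpha x\e^{-x}$, and derive a single quantitative expansion of $F$ with error $O(x\e^{-x}\log(1/v)+1/x)$; this treats (ii) and (iii) by one mechanism and even yields a rate $O(\alpha|\log\alpha|)$ in (iii), which the paper's qualitative squeeze does not give. The price is exactly what you flag: you must control $\log(1/v_\alpha)$, hence you genuinely need the lower bound on $v_\alpha$, which the paper's argument avoids.

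Three small repairs are needed to make your sketch airtight, all routine. First, your expansion is stated for ``$x$ large'', but (iii) takes a supremum over $[\delta,\infty)$; you must either rerun the derivation with a window of width $\min(1,x/2)$ so that the error constants merely depend on $\delta$, or treat the compact range $[\delta,X_0]$ separately (easy, since $v_\alpha\le\pi\alpha/\e\to0$ uniformly there). Second, the lower-bound estimate via $\min_{[x-1,x+1]}f$ fails for $x<1$ because $f$ vanishes on part of that window; use a window of width comparable to $\delta$ instead. Third, in (ii) the parameter $\alpha$ is fixed and possibly large, so before invoking any ``$v\le1$'' estimate you need the one-line observation that $v_\alpha(x)\to0$ as $x\to\infty$ (if $v_\alpha(x_n)\ge c>0$ along $x_n\to\infty$, dominated convergence in \eqref{eq3.6} gives $F(x_n+\ri v_\alpha(x_n))\to0\ne1/\alpha$), which the paper obtains from Lemma~\ref{egensk v-alpha II}(ii). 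None of these affect the viability of your approach.
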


\begin{lemma}\label{graense langs kurve}
Consider a fixed positive number $\alpha$. 

\begin{enumerate}[i]

\item For any $z$ in $(-\infty,-c_\alpha)$ we have that 
$z+\ri t\in\CG_\alpha^+$ for all positive $t$, and that
$H_\alpha(z+\ri t)\to H_\alpha(z)\in\R$
  non-tangentially (from $\C^+$ to $\R$), as $t\downarrow0$.

\item For any point $z$ in $\CG_\alpha'\setminus\{-c_\alpha\}$
there exists a vector $\gamma_z$ in $\C$
and a number $\epsilon_z$ in $(0,\infty)$, such that

\begin{enumerate}[a]

\item $z+t\gamma_z\in\CG_\alpha^+$ for all $t$ in
  $(0,\epsilon_z)$.

\item $H_\alpha(z+t\gamma_z)\to H_\alpha(z)\in\R$
  non-tangentially (from $\C^+$ to $\R$), as $t\downarrow0$.

\end{enumerate}
\end{enumerate}
\end{lemma}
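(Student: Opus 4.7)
The plan is to take, in both parts, the purely vertical approach direction $\gamma_z = \ri$ (so for (i) the path is simply $z + \ri t$, and for (ii) I take $\gamma_z = \ri$), and to analyse the image under $H_\alpha$ by a first-order Taylor expansion at $z$. The non-tangential claim will reduce, in either case, to the statement that $\ri H_\alpha'(z)$ has strictly positive imaginary part, i.e., $\re H_\alpha'(z) > 0$: for (ii) this is exactly Lemma~\ref{H' ikke 0}, and for (i) it will follow from a direct monotonicity argument on $F$.

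For part (i), I would fix $z = x_0 \in (-\infty, -c_\alpha)$ and note $v_\alpha(x_0) = 0$ by \eqref{eq3.6a}; then $\im(x_0 + \ri t) = t > 0 = v_\alpha(\re(x_0 + \ri t))$, which places $x_0 + \ri t$ in $\CG_\alpha^+$ for every $t > 0$ via \eqref{eq3.4b}. Differentiating under the integral sign in \eqref{eq3.0} shows $F$ is strictly increasing on $(-\infty,0)$, so $F(x_0) < F(-c_\alpha) = 1/\alpha$, yielding $H_\alpha'(x_0) = 1 - \alpha F(x_0) > 0$; this derivative is real because $H_\alpha$ is real on $(-\infty, 0)$. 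The Taylor expansion
\[
H_\alpha(x_0 + \ri t) - H_\alpha(x_0) = \ri t\, H_\alpha'(x_0) + O(t^2)
\]
then exhibits the image approaching $H_\alpha(x_0) \in \R$ tangentially to the positive imaginary axis, which is certainly non-tangential to $\R$. That the image lies in $\C^+$ is, alternatively, immediate from Lemma~\ref{intro v-alpha}(ii).

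For part (ii), I would again take $\gamma_z = \ri$. With $z = x + \ri v_\alpha(x)$ and $x \in (-c_\alpha, \infty)$, the same observation gives $\im(z + \ri t) = v_\alpha(x) + t > v_\alpha(x) = v_\alpha(\re(z + \ri t))$, so $z + \ri t \in \CG_\alpha^+$ for all $t > 0$. Writing $H_\alpha'(z) = a + \ri b$ with $a > 0$ from Lemma~\ref{H' ikke 0} and recalling $H_\alpha(z) \in \R$ from Lemma~\ref{intro v-alpha}(iii), Taylor expansion gives
\[
H_\alpha(z + \ri t) - H_\alpha(z) = (-b t + \ri a t) + O(t^2),
\]
so the ratio of imaginary to absolute value of real part tends to $a/|b| \in (0, +\infty]$ (read as $+\infty$ when $b = 0$), and is therefore bounded below by some positive $\delta_z$ for $t$ small. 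That is precisely the non-tangential condition.

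I do not foresee a serious obstacle: the argument is essentially a first-order Taylor expansion, and the only non-trivial ingredient, the strict positivity of $\re H_\alpha'$ on $\CG_\alpha'\setminus\{-c_\alpha\}$, is already in hand from Lemma~\ref{H' ikke 0}. The only delicate point is picking an approach direction that simultaneously keeps the path inside $\CG_\alpha^+$ and sends the image into $\C^+$ at a controlled angle, and the vertical choice $\gamma_z = \ri$ handles both requirements uniformly since the staying-in-$\CG_\alpha^+$ condition is transparent from \eqref{eq3.4b}.
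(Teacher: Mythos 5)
Your proof is correct, and for part (i) it coincides with the paper's argument (vertical approach, plus the observation that $H_\alpha'(z)=1-\alpha F(z)>0$ for real $z<-c_\alpha$). For part (ii) you deviate from the paper in a way that genuinely simplifies it: the paper approaches $z=x+\ri v_\alpha(x)$ along a tilted direction $\gamma_z=1+\ri r$, where $r$ must be chosen both larger than $v_\alpha'(x)$ (to keep the path above the graph of $v_\alpha$, verified there by a contradiction argument with a sequence $t_n\downarrow0$, and requiring the differentiability of $v_\alpha$ from Lemma~\ref{egensk v-alpha}(i)) and so that $(1,r)$ is not perpendicular to $(\im H_\alpha'(z),\re H_\alpha'(z))$ (to make $\im\bigl(H_\alpha'(z)\gamma_z\bigr)\neq0$). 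Your vertical choice $\gamma_z=\ri$ makes both requirements immediate: membership in $\CG_\alpha^+$ is trivial because $\re(z+\ri t)=x$ and $\im(z+\ri t)=v_\alpha(x)+t>v_\alpha(x)$, and the first-order term of the image path is $\ri t H_\alpha'(z)$, whose imaginary part $t\,\re H_\alpha'(z)$ is strictly positive by Lemma~\ref{H' ikke 0}, so the limit is automatically non-tangential (ratio of imaginary part to distance from $\xi$ in the real direction tending to $\re H_\alpha'(z)/|\im H_\alpha'(z)|>0$, or $+\infty$). Thus you use the same key ingredient (Lemma~\ref{H' ikke 0}) but dispense with the dependence on $v_\alpha'$ and the perpendicularity bookkeeping; the paper's more general direction buys nothing essential here beyond flexibility in the choice of approach curve. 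The only small points to keep explicit are that $H_\alpha$ is analytic at $z$ (so the $O(t^2)$ Taylor remainder is legitimate) and that the image path lies in $\C^+$ for small $t$, which you correctly note follows either from Lemma~\ref{intro v-alpha}(ii) or from the expansion itself.
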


\begin{proof} 

(i) \ Assume that $z\in(-\infty,-c_\alpha)$. According to
  Lemma~\ref{intro v-alpha}(ii) we have that $z+\ri t\in\CG_\alpha^+$
  and hence $H_\alpha(z+\ri t)\in\C^+$ for all
  positive $t$. It remains then to show that $\im(\frac{\d}{\d t}H_\alpha(z+\ri
  t))\ne0$ at $t=0$. Using \eqref{eq3.9} we find that
\begin{equation*}
\begin{split}
\frac{\d}{\d t}H_\alpha(z+\ri t)=\ri H_\alpha'(z+\ri t)
&=\ri\Big(1-\alpha\int_0^\infty\frac{s\e^{-s}}{(z+\ri t-s)^2}\6s\Big)
\\[.2cm]
&=\ri\Big(1-\alpha\int_0^\infty
\frac{(z-s-\ri t)^2s\e^{-s}}{((z-s)^2+t^2)^2}\6s\Big),
\end{split}
\end{equation*}
and hence at $t=0$ we have that
\[
\im\Big(\frac{\d}{\d t}H_\alpha(z+\ri t)\Big)
=1-\alpha\int_0^\infty
\frac{s\e^{-s}}{(z-s)^2}\6s>0,
\]
since $z<-c_\alpha$ (cf.\ \eqref{eq3.10}).

(ii) \ Assume that $z=x+\ri v_\alpha(x)$ for some $x$ in
$(-c_\alpha,\infty)$. We choose then a real number $r$, such that

\begin{enumerate}[1]

\item $r>v_\alpha'(x)$ (cf.\ Lemma~\ref{egensk v-alpha}(i)).

\item The vector $(1,r)$ is not perpendicular to
  the vector $(\im(H_\alpha'(z)),\re(H_\alpha'(z)))$ in $\R^2$
  (cf.\ Lemma~\ref{H' ikke 0}).

\end{enumerate}
We then put $\gamma_z=1+\ri r$. Condition (1) ensures that
we may choose a positive number $\epsilon_z$, such that claim (a) in
(ii) is satisfied. Indeed, otherwise we could choose a sequence $(t_n)$
of positive numbers, such that $t_n\to0$ as $n\to\infty$, and
$v_\alpha(x)+t_nr=\im(z+t_n\gamma_z)\le
v_\alpha(\re(z+t_n\gamma_z))=v_\alpha(x+t_n)$ for all $n$. This implies that
\[
r=\frac{v_\alpha(x)+t_nr-v_\alpha(x)}{t_n}
\le\frac{v_\alpha(x+t_n)-v_\alpha(x)}{t_n},
\]
for all $n$, which contradicts (1) and the fact that the right hand
side converges to $v_\alpha'(x)$ as $n\to\infty$.

Regarding assertion (b) in (ii), we remark first that
statement (ii) in Lemma~\ref{intro v-alpha} ensures that
$H_\alpha(z+t\gamma_z)\in\C^+$ for all $t$ in $(0,\epsilon_z)$.
We note next that
\begin{equation*}
\begin{split}
\im\Big(\diff H_\alpha(z+t\gamma_z)\Big)=\im\big(H_\alpha'(z)\gamma_z)=
\big\langle(\im(H_\alpha'(z)),\re(H_\alpha'(z))),(1,r)\big\rangle.
\end{split}
\end{equation*}
Condition (2) thus ensures that $\im(\frac{\d}{\d
  t}H_\alpha(z+t\gamma_z))\ne0$ at $t=0$, which implies (b).
This completes the proof.
\end{proof}

\begin{lemma}\label{non-tangential limit}
Let $\alpha$ be a strictly positive number, let $z$ be a
  point in $\CG_\alpha\setminus\{-c_\alpha\}$, and put $\xi=H_\alpha(z)=z+\alpha
  zG_{\mu_1}(z)\in\R$ (cf.\ Lemma~\ref{intro v-alpha}(iii)).

Then the Cauchy transform $G_{\nu_{\alpha}}$ of $\nu_{\alpha}$ has the
non-tangential limit $\frac{1}{z}$ at $\xi$. More precisely, for any
positive number $\delta$ we have that
\[
\lim_{w\to0\atop w\in\triangle_\delta}G_{\nu_\alpha}(\xi+w)=\frac{1}{z},
\]
where $\triangle_\delta$ is given by \eqref{eq1.1}.
\end{lemma}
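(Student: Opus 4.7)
The plan is to apply Lemma~\ref{BV key lemma} to the function $u(w):=-G_{\nu_\alpha}(\xi+w)$, $w\in\C^+$. This $u$ is analytic from $\C^+$ into $\C^+$: since $\xi\in\R$ we have $\xi+w\in\C^+$ for $w\in\C^+$, and since $G_{\nu_\alpha}\colon\C^+\to\C^-$ (as the Cauchy transform of a probability measure on $\R$), the sign flip places the values in $\C^+$. Showing that $u$ has non-tangential limit $-1/z$ at $0$ is exactly the conclusion of the lemma. Note that $1/z$ is meaningful: by Lemma~\ref{intro v-alpha}(iv) one has $v_\alpha(0)>0$, so $0\notin\CG_\alpha$ and hence $z\ne 0$.

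By Lemma~\ref{BV key lemma}, it suffices to exhibit a single curve $\Gamma\subseteq\C^+$ approaching $0$ non-tangentially along which $u\to -1/z$. The construction is already encoded in Lemma~\ref{graense langs kurve}: split into the two cases allowed by $z\in\CG_\alpha\setminus\{-c_\alpha\}$, namely either $z\in(-\infty,-c_\alpha)$, and set $\tilde\Gamma=\{z+\ri t\mid t\in(0,\infty)\}$; or $z=x+\ri v_\alpha(x)$ for some $x\in(-c_\alpha,\infty)$, and set $\tilde\Gamma=\{z+t\gamma_z\mid t\in(0,\epsilon_z)\}$, with $\gamma_z,\epsilon_z$ supplied by Lemma~\ref{graense langs kurve}(ii). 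In either case Lemma~\ref{graense langs kurve} gives $\tilde\Gamma\subseteq\CG_\alpha^+$, and crucially that
\[
\Gamma:=\{H_\alpha(w)-\xi\mid w\in\tilde\Gamma\}\subseteq\C^+
\]
approaches $0$ non-tangentially as $t\downarrow 0$.

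Along $\Gamma$, Proposition~\ref{key formel}(ii) evaluates $u$ directly: for $w\in\tilde\Gamma\subseteq\CG_\alpha^+$,
\[
u\big(H_\alpha(w)-\xi\big)=-G_{\nu_\alpha}(H_\alpha(w))=-\frac{1}{w}\longrightarrow -\frac{1}{z}\quad\text{as }w\to z,
\]
so $u$ tends to $-1/z$ along $\Gamma$. Lemma~\ref{BV key lemma} then promotes this sectorial limit to the desired non-tangential limit at $0$.

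The substantive work is already concentrated in Lemma~\ref{graense langs kurve}, which was designed precisely to furnish approach curves whose $H_\alpha$-image lands non-tangentially in $\C^+$ at $\xi$; without that non-tangentiality the hypotheses of Lemma~\ref{BV key lemma} would not be met. I do not expect any further obstacle beyond invoking the appropriate case. The hypothesis $z\ne -c_\alpha$ is essential for case (ii): it is exactly at $-c_\alpha$ where $H_\alpha'$ vanishes (Lemma~\ref{H' ikke 0}), so the transversality condition underlying the choice of $\gamma_z$ in Lemma~\ref{graense langs kurve}(ii) would break down.
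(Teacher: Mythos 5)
Your proposal is correct and follows essentially the same route as the paper: use Lemma~\ref{graense langs kurve} to produce an approach curve in $\CG_\alpha^+$ whose $H_\alpha$-image tends to $\xi$ non-tangentially, evaluate $G_{\nu_\alpha}$ along it via Proposition~\ref{key formel}(ii) to get the limit $1/z$, and then upgrade to a non-tangential limit by applying Lemma~\ref{BV key lemma} to $w\mapsto -G_{\nu_\alpha}(\xi+w)$. Your explicit case split between $z\in(-\infty,-c_\alpha)$ and $z=x+\ri v_\alpha(x)$, $x>-c_\alpha$, is only a slightly more detailed bookkeeping of what the paper does implicitly.
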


\begin{proof} By Lemma~\ref{graense langs kurve} we may choose
  $\gamma_z$ in $\C$ and $\epsilon_z$ in $(0,\infty)$, such that
  $z+t\gamma_z\in\CG_\alpha^+$ for all $t$ in $(0,\epsilon_z)$, and such
  that $H_\alpha(z+t\gamma_z)\to H_\alpha(z)=\xi$ non-tangentially (from
  $\C^+$ to $\R$) as $t\downarrow0$. Using Proposition~\ref{key
    formel}(ii) it follows that 
\[
\lim_{t\downarrow0}G_{\nu_\alpha}(H_\alpha(z+t\gamma_z))
=\lim_{t\downarrow0}\frac{1}{z+t\gamma_z}=\frac{1}{z}
\]
(note that $z\ne0$, since $0\notin\CG_\alpha$).
Applying then Lemma~\ref{BV key lemma} (to the function $w\mapsto
-G_{\nu_\alpha}(\xi+w)$), we may conclude that actually
\[
\lim_{w\to0\atop
  w\in\triangle_\delta}G_{\nu_\alpha}(\xi+w)=\frac{1}{z}
\]
for any positive number $\delta$, as desired.
\end{proof}

For any $\alpha$ in $(0,\infty)$ we introduce next the function
$P_\alpha\colon\R\to\R$ (cf.\ Lemma~\ref{intro v-alpha}(iii))
given by
\begin{equation}
P_\alpha(x)=H_\alpha(x+\ri v_\alpha(x)), \qquad(x\in\R).
\label{eq3.13}
\end{equation}
In particular we put
\begin{equation}
s_\alpha=P_\alpha(-c_\alpha)=H_\alpha(-c_\alpha).
\label{eq3.13a}
\end{equation}
In the following lemma we collect some properties of $P_\alpha$ that
will be needed in the sequel. We defer the rather technical proof
to Appendix~\ref{tekniske beviser}.

\begin{lemma}\label{egensk P-alpha}
For any positive number $\alpha$ the function
$P_\alpha\colon\R\to\R$ has the following properties:

\begin{enumerate}[i]

\item $P_\alpha$ is continuous on $\R$ and analytic on
  $\R\setminus\{-c_\alpha\}$.

\item $P_\alpha$ satisfies that
\begin{equation}
P_\alpha(x)=
\begin{cases}
x+\alpha+\alpha\int_0^\infty\frac{t\e^{-t}}{x-t}\6t, &\text{if
  $x<-c_\alpha$},
\\
2x+\alpha-\alpha\int_0^\infty\frac{t^2\e^{-t}}{(x-t)^2+v_\alpha(x)^2}\6t,
&\text{if $x\ge-c_\alpha$}.
\end{cases}
\label{eq3.14}
\end{equation}

\item The number $s_\alpha:=P_\alpha(-c_\alpha)$ is strictly positive.

\item $\displaystyle{
\lim_{x\to\infty}\big(x+\alpha-P_\alpha(x)\big)=0}$.

\item $P_\alpha$ is a strictly increasing bijection of $\R$ onto $\R$,
  and $P_\alpha'(x)>0$ for all $x$ in $\R\setminus\{-c_\alpha\}$.

\end{enumerate}
\end{lemma}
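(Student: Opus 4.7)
The strategy is to exploit the defining relation $P_\alpha(x)=H_\alpha(x+\ri v_\alpha(x))$, the fact that $H_\alpha(x+\ri v_\alpha(x))\in\R$ (Lemma~\ref{intro v-alpha}(iii)), and the identity $\alpha F(x+\ri v_\alpha(x))=1$ for $x>-c_\alpha$ from \eqref{eq3.6}. For (i), the map $x\mapsto x+\ri v_\alpha(x)$ takes $\R$ into $\C\setminus[0,\infty)$: when $x>-c_\alpha$, $v_\alpha(x)>0$, and when $x\le-c_\alpha$, $v_\alpha(x)=0$ while $x<0$. Combining this with Lemma~\ref{egensk v-alpha}(i) and the analyticity of $H_\alpha$ on $\C\setminus[0,\infty)$, one obtains continuity on $\R$ and real-analyticity on $\R\setminus\{-c_\alpha\}$. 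For (ii), the case $x<-c_\alpha$ is immediate from \eqref{eq3.12} since $v_\alpha$ vanishes. For $x\ge-c_\alpha$, put $z=x+\ri v_\alpha(x)$ and use $\frac{1}{z-t}=\frac{x-t-\ri v_\alpha(x)}{(x-t)^2+v_\alpha(x)^2}$ to compute $\re(H_\alpha(z))=P_\alpha(x)$; substituting $(x-t)t=xt-t^2$ and invoking $\alpha\int\frac{t\e^{-t}}{(x-t)^2+v_\alpha(x)^2}\6t=1$ rewrites the resulting integral as the second line of \eqref{eq3.14}.

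Part (iii) is obtained by evaluating at $x=-c_\alpha$: $s_\alpha=H_\alpha(-c_\alpha)=-c_\alpha+\alpha-\alpha\int_0^\infty\frac{t\e^{-t}}{c_\alpha+t}\6t$. Writing $\frac{t}{c_\alpha+t}=1-\frac{c_\alpha}{c_\alpha+t}$ reduces this to $s_\alpha=c_\alpha(\alpha J-1)$ with $J=\int_0^\infty\frac{\e^{-t}}{c_\alpha+t}\6t$; rewriting the defining identity \eqref{eq3.10} via $\frac{t}{(c_\alpha+t)^2}=\frac{1}{c_\alpha+t}-\frac{c_\alpha}{(c_\alpha+t)^2}$ then gives $\alpha J-1=\alpha c_\alpha\int_0^\infty\frac{\e^{-t}}{(c_\alpha+t)^2}\6t>0$, whence $s_\alpha=\alpha c_\alpha^2\int_0^\infty\frac{\e^{-t}}{(c_\alpha+t)^2}\6t>0$. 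For (v), on $(-\infty,-c_\alpha)$ direct differentiation of the first line of \eqref{eq3.14} gives $P_\alpha'(x)=1-\alpha F(x)$, positive because $F(x)<F(-c_\alpha)=1/\alpha$ for $x<-c_\alpha$ by the monotonicity established in the proof of Lemma~\ref{intro v-alpha}(i). On $(-c_\alpha,\infty)$, the chain rule yields $P_\alpha'(x)=H_\alpha'(z)(1+\ri v_\alpha'(x))$; reality of $P_\alpha(x)$ forces $\im(H_\alpha'(z))=-v_\alpha'(x)\re(H_\alpha'(z))$, so $P_\alpha'(x)=(1+v_\alpha'(x)^2)\re(H_\alpha'(z))$, positive by Lemma~\ref{H' ikke 0}. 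Continuity at $-c_\alpha$ combined with strict monotonicity on each side then yields global strict monotonicity on $\R$. The limit $P_\alpha(x)\to-\infty$ as $x\to-\infty$ is clear from the first line of \eqref{eq3.14}, and (iv) yields $P_\alpha(x)\to\infty$ as $x\to\infty$, so $P_\alpha$ is a bijection.

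The main obstacle is (iv). Starting from (ii), $P_\alpha(x)-x-\alpha=\alpha\int_0^\infty\frac{(x-t)t\e^{-t}}{(x-t)^2+v_\alpha(x)^2}\6t$, and the task is to show this vanishes as $x\to\infty$. The cleanest route is via the alternative representation $P_\alpha(x)=\frac{(x^2+v_\alpha(x)^2)(1+\alpha\re(G_{\mu_1}(z)))}{x}$, obtained by splitting the reality condition for $H_\alpha(z)=z(1+\alpha G_{\mu_1}(z))$ into real and imaginary parts (the imaginary equation giving $x\alpha\im(G_{\mu_1}(z))=-v_\alpha(x)(1+\alpha\re(G_{\mu_1}(z)))$). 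Combined with the moment-type expansion $G_{\mu_1}(z)=\frac{1}{z}+\frac{1}{z^2}+O(1/|z|^3)$ (so $\re(G_{\mu_1}(z))=\frac{1}{x}+\frac{1}{x^2}+O(1/x^3)$ since $v_\alpha(x)\to0$ very quickly) and the rapid decay $v_\alpha(x)\sim\alpha\pi x\e^{-x}$ from Lemma~\ref{egensk v-alpha}(ii), this yields $P_\alpha(x)=x+\alpha+O(1/x)$. The technical subtlety is to make the asymptotic expansion of $G_{\mu_1}(z)$ quantitative for $z=x+\ri v_\alpha(x)$ despite the near-singular behaviour of $1/(z-t)$ at $t=x$; the issue is handled by splitting the integral away from and near $t=x$ and using $|z-t|\ge|x-t|$ on the far part together with strict positivity of $v_\alpha(x)$ on the near part.
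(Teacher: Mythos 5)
Your proposal is correct, and for parts (i), (ii) and (v) it is essentially the paper's own argument: analyticity and continuity are inherited from $H_\alpha$ and $v_\alpha$, formula \eqref{eq3.14} comes from taking the real part of $H_\alpha(x+\ri v_\alpha(x))$ and invoking $\alpha F(x+\ri v_\alpha(x))=1$, and your identity $P_\alpha'(x)=(1+v_\alpha'(x)^2)\,\re H_\alpha'(x+\ri v_\alpha(x))$ is the same computation as the paper's $P_\alpha'=|H_\alpha'|^2/\re H_\alpha'$, both resting on Lemma~\ref{H' ikke 0}. Where you genuinely diverge is in (iii) and (iv). For (iii) the paper argues qualitatively ($H_\alpha'<0$ on $(-c_\alpha,0)$ and $H_\alpha(z)\to0$ as $z\uparrow0$, hence $s_\alpha=H_\alpha(-c_\alpha)>0$), whereas your explicit computation $s_\alpha=c_\alpha(\alpha J-1)=\alpha c_\alpha^2\int_0^\infty\e^{-t}(c_\alpha+t)^{-2}\6t>0$ is a correct alternative which in fact recovers the closed form $s_\alpha=\frac{c_\alpha(\alpha-c_\alpha)}{1+c_\alpha}$ of Proposition~\ref{egensk s-alpha}(ii) without needing to know $\alpha>c_\alpha$. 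For (iv) the paper works directly with the second line of \eqref{eq3.14} and controls $\int_0^\infty\frac{t^2\e^{-t}}{(x-t)^2+v_\alpha(x)^2}\6t$ by the two-sided bounds of Lemma~\ref{asympt lemma for P-alpha} over the shrinking window $(x-\frac{\epsilon}{x},x+\frac{\epsilon}{x})$; you instead use the reality condition to derive $P_\alpha(x)=\frac{(x^2+v_\alpha(x)^2)(1+\alpha\re G_{\mu_1}(z))}{x}$ (this identity is correct) and thereby reduce (iv) to the first-moment statement $x\,\re G_{\mu_1}(x+\ri v_\alpha(x))\to1$, which is somewhat lighter than the paper's $t^2$-estimates. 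Two caveats on your final step: mere strict positivity of $v_\alpha(x)$ does not control the window $|t-x|\le\epsilon$ — you need the quantitative lower bound $v_\alpha(x)\ge(1+o(1))\,\alpha\pi x\e^{-x}$ (Lemma~\ref{egensk v-alpha}(ii), or Lemma~\ref{egensk v-alpha II}(i)), which you do cite for the decay, or else the oddness of $(x-t)/((x-t)^2+v_\alpha(x)^2)$ about $t=x$; and the claimed precision $\re G_{\mu_1}(z)=\frac1x+\frac1{x^2}+O(x^{-3})$ is more than (iv) requires and would itself need a shrinking window or the cancellation trick — it suffices that $x\,\re G_{\mu_1}(z)\to1$, which your fixed-$\epsilon$ near/far splitting delivers after letting $\epsilon\downarrow0$, exactly parallel in spirit to the paper's limsup/liminf argument.
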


\bigskip
\noindent
\[
\includegraphics[height=8.0cm]{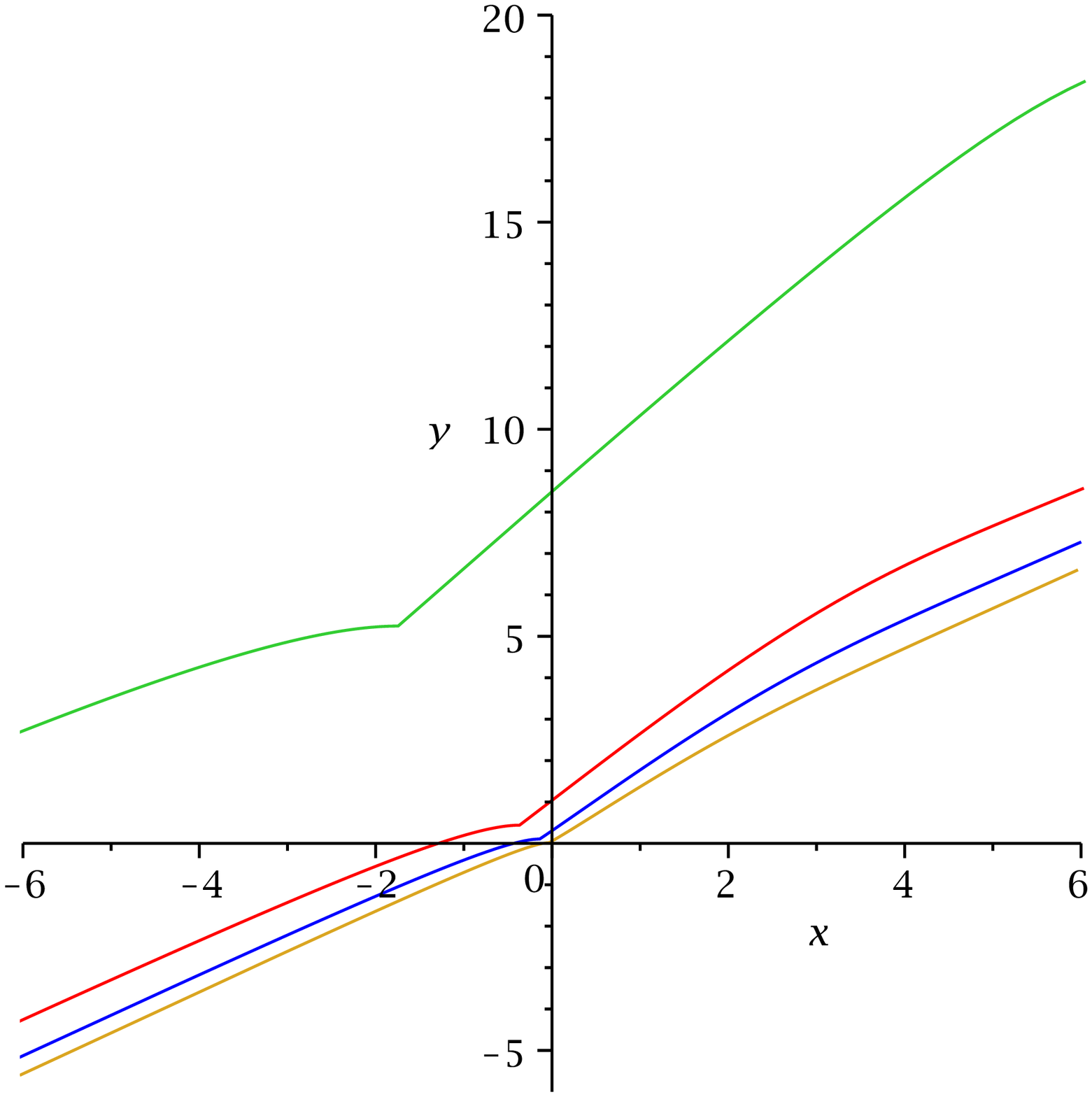}
\]
\begin{center}
{\footnotesize The graphs of the functions $P_{1/2}$, $P_1$, $P_2$ and
  $P_{10}$.}
\end{center}

\begin{theorem}\label{thm abs kont}
Let $\alpha$ be a positive number, and consider the
  function $Q_\alpha\colon\R\to[0,\infty)$
defined by 
\begin{equation}
Q_\alpha(x)=\frac{v_\alpha(x)}{x^2+v_\alpha(x)^2}, \qquad(x\in\R).
\label{eq3.1}
\end{equation}
We then have

\begin{enumerate}[i]

\item The free Gamma distribution $\nu_\alpha$ with parameter $\alpha$
  is absolutely continuous (with respect to Lebesgue measure).

\item The density $f_\alpha$ of $\nu_\alpha$ is given by
\begin{equation}
f_\alpha(\xi)=
\begin{cases}
0, &\text{if $\xi\in(-\infty,s_\alpha]$},\\
\frac{1}{\pi}Q_\alpha(P_\alpha^{\brinv}(\xi)), &\text{if
  $\xi\in(s_\alpha,\infty)$},
\end{cases}
\label{eq3.15}
\end{equation}
where $P_\alpha^{\brinv}$ denotes the inverse of $P_\alpha$ (cf.\ (v)
in Lemma~\ref{egensk P-alpha}).

\item The support of $\nu_\alpha$ is $[s_\alpha,\infty)$.

\item $f_\alpha$ is analytic on $(s_\alpha,\infty)$.

\end{enumerate}
\end{theorem}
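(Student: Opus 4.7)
My plan is to combine Proposition~\ref{key formel}(ii) with the Stieltjes inversion recalled in Section~\ref{PL Stieltjes inversion} and Lemma~\ref{non-tangential limit} to compute the boundary values of $G_{\nu_\alpha}$ on the real axis. For any $\xi\in\R\setminus\{s_\alpha\}$, set $x=P_\alpha^{\brinv}(\xi)\in\R\setminus\{-c_\alpha\}$ (well-defined by Lemma~\ref{egensk P-alpha}(v)) and $z=x+\ri v_\alpha(x)$; then $z\in\CG_\alpha\setminus\{-c_\alpha\}$ and $H_\alpha(z)=\xi$ by \eqref{eq3.13}. Lemma~\ref{non-tangential limit} then yields
\[
\lim_{y\downarrow0}G_{\nu_\alpha}(\xi+\ri y)=\frac{1}{z}
=\frac{x-\ri v_\alpha(x)}{x^2+v_\alpha(x)^2},
\]
whose imaginary part is $-Q_\alpha(x)$. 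By the Stieltjes inversion theory of Section~\ref{PL Stieltjes inversion}, the singular part of $\nu_\alpha$ is concentrated on the set where $|G_{\nu_\alpha}(\cdot+\ri y)|$ diverges as $y\downarrow0$, which is thus contained in $\{s_\alpha\}$, and the density of the absolutely continuous part at such $\xi$ equals $-\tfrac{1}{\pi}\im(1/z)=\tfrac{1}{\pi}Q_\alpha(P_\alpha^{\brinv}(\xi))$. Since $v_\alpha$ vanishes on $(-\infty,-c_\alpha]$ by \eqref{eq3.6a} and is strictly positive on $(-c_\alpha,\infty)$ by Lemma~\ref{intro v-alpha}(iv), this density vanishes on $(-\infty,s_\alpha)$ and is strictly positive on $(s_\alpha,\infty)$, giving formula~\eqref{eq3.15} together with the support statement~(iii).

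It remains to exclude an atom at $s_\alpha$, which would be the only possible singular contribution. I would show that $G_{\nu_\alpha}(s_\alpha+\ri y)$ stays bounded as $y\downarrow0$; the classical identity $\nu_\alpha(\{s_\alpha\})=-\lim_{y\downarrow0}y\im G_{\nu_\alpha}(s_\alpha+\ri y)$ then forces the atom to vanish. By Lemma~\ref{H' ikke 0} we have $H_\alpha'(-c_\alpha)=0$, while formula \eqref{eq3.9} at $k=2$ gives $H_\alpha''(-c_\alpha)=-2\alpha\int_0^\infty t\e^{-t}(c_\alpha+t)^{-3}\6t=:-K<0$. The Taylor expansion $H_\alpha(-c_\alpha+w)=s_\alpha-\tfrac{K}{2}w^2+O(w^3)$ lets me solve $H_\alpha(-c_\alpha+w)=s_\alpha+\ri y$ for small $y>0$, yielding the root $w=\sqrt{y/K}\,(\ri-1)+O(y)$; this $w$ satisfies $\re w<0$, so $\re(-c_\alpha+w)<-c_\alpha$, where $v_\alpha$ vanishes, and hence $-c_\alpha+w\in\CG_\alpha^+$. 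Proposition~\ref{key formel}(ii) then gives $G_{\nu_\alpha}(s_\alpha+\ri y)=(-c_\alpha+w)^{-1}\to-1/c_\alpha$, establishing the required boundedness.

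Analyticity of $f_\alpha$ on $(s_\alpha,\infty)$ (assertion~(iv)) is then routine: $v_\alpha$ is analytic on $\R\setminus\{-c_\alpha\}$ by Lemma~\ref{egensk v-alpha}(i), the denominator $x^2+v_\alpha(x)^2$ is strictly positive on $(-c_\alpha,\infty)$, and $P_\alpha^{\brinv}$ is analytic on $\R\setminus\{s_\alpha\}$ by Lemma~\ref{egensk P-alpha}(i),(v), since $P_\alpha'$ is analytic and nonvanishing off $\{-c_\alpha\}$. The main obstacle is the atom-exclusion step: because $H_\alpha'$ vanishes at the critical point $-c_\alpha$, Lemma~\ref{non-tangential limit} is unavailable at $\xi=s_\alpha$, and one must replace it with the second-order local analysis of $H_\alpha$ at $-c_\alpha$, taking care to verify that the constructed preimage of $s_\alpha+\ri y$ lies in $\CG_\alpha^+$ so that Proposition~\ref{key formel}(ii) can be invoked.
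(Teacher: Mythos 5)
Your treatment of (ii)--(iv), and your localization of the singular part to $\{s_\alpha\}$ via Stieltjes inversion, Lemma~\ref{non-tangential limit} and the bijectivity of $P_\alpha$ (Lemma~\ref{egensk P-alpha}(v)), follows the paper's proof essentially verbatim, and is correct. Where you genuinely diverge is the exclusion of an atom at $s_\alpha$. The paper argues softly through free infinite divisibility: by (the proof of) \cite[Proposition~5.12]{BV}, a measure in $\ID(\boxplus)$ has at most one atom, necessarily equal to the non-tangential limit of $\phi_{\nu_\alpha}$ at $0$; since $\phi_{\nu_\alpha}(z)=\alpha zG_{\mu_1}(z)$ by Proposition~\ref{key formel}(i), that limit is $0$, and $s_\alpha>0$ finishes the argument. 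You instead work directly at the double critical point: since $H_\alpha'(-c_\alpha)=0$ (Lemma~\ref{H' ikke 0}) and $H_\alpha''(-c_\alpha)=-K<0$, the equation $H_\alpha(-c_\alpha+w)=s_\alpha+\ri y$ has for small $y>0$ an exact root $w=\sqrt{y/K}(\ri-1)+O(y)$ with $\re w<0<\im w$; this can be made rigorous by writing $H_\alpha(z)-s_\alpha=-\tfrac{K}{2}\phi(z)^2$ with $\phi(z)=(z+c_\alpha)\sqrt{g(z)}$, $g(-c_\alpha)=1$, a local biholomorphism, so the sketch you flag as the ``main obstacle'' is indeed fillable by the standard normal form at a nondegenerate critical point. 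The root lies in $\CG_\alpha^+$ because $v_\alpha\equiv0$ to the left of $-c_\alpha$, so Proposition~\ref{key formel}(ii) gives $G_{\nu_\alpha}(s_\alpha+\ri y)=(-c_\alpha+w)^{-1}\to-1/c_\alpha$, whence $y\,\im G_{\nu_\alpha}(s_\alpha+\ri y)\to0$ and the atom vanishes. Both routes are valid: the paper's is shorter and exploits free-probabilistic structure already available, while yours is more hands-on and yields slightly more, namely a finite boundary value of $G_{\nu_\alpha}$ at $s_\alpha$ itself (so the set $S$ is actually empty), which is consistent with the square-root vanishing of $f_\alpha$ at $s_\alpha$ proved later in Section~\ref{sec asympt opf}.
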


\bigskip
\noindent
\begin{minipage}[c]{0.5\textwidth}
\[
\includegraphics[height=8.0cm]{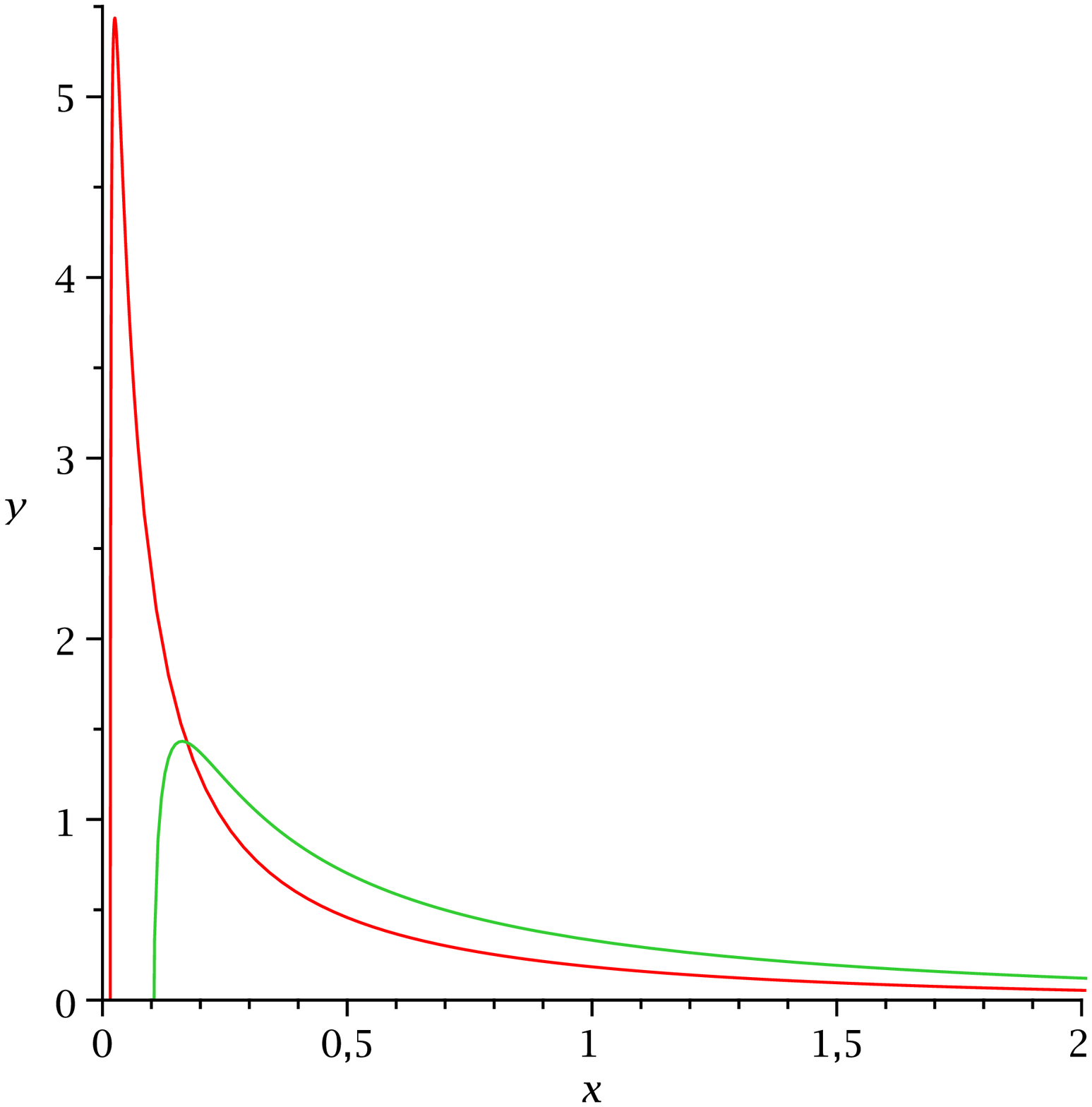}
\]
\end{minipage}
\hfill
\begin{minipage}[c]{0.5\textwidth}
\[
\includegraphics[height=8.0cm]{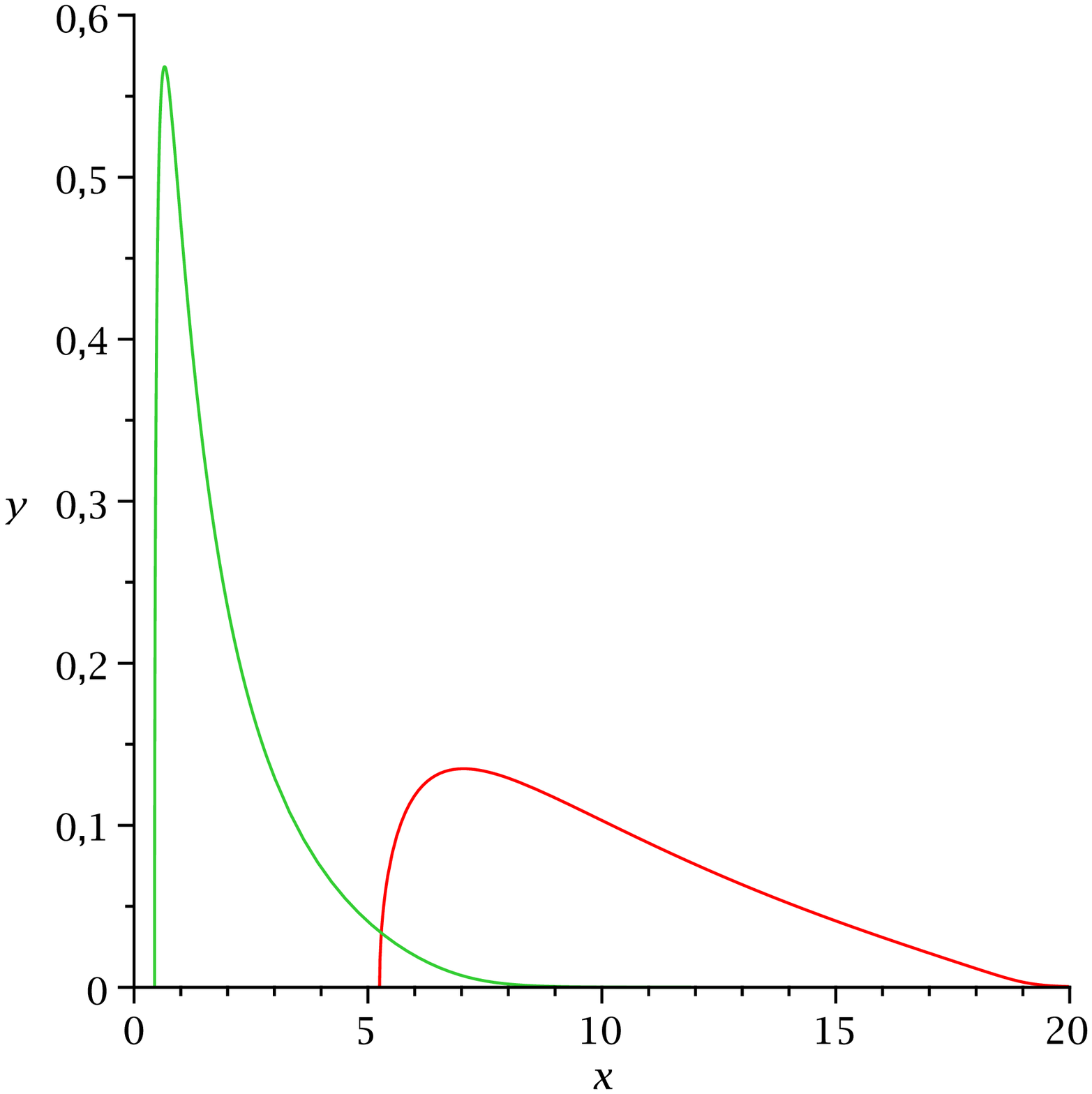}
\]
\end{minipage}
\begin{center}
{\footnotesize The graphs of the densities of the free gamma
  distributions $\nu_{1/2},\nu_1$ and,
  respectively, $\nu_2,\nu_{10}$.}
\end{center}

\begin{proofof}[Proof of Theorem~\ref{thm abs kont}.] 
(i) \ As described in Subsection~\ref{PL Stieltjes
    inversion}, the singular part of $\nu_\alpha$ (with respect to
  Lebesgue measure) is concentrated on the set
\begin{equation*}
S=\big\{\xi\in\R\bigm| 
\textstyle{\lim_{y\downarrow0}|G_{\nu_\alpha}(\xi+\ri y)|
=\infty\big\}}.
\end{equation*}
From Lemma~\ref{egensk P-alpha} it follows that $P_\alpha$ is a
continuous, increasing bijection of $\R$ onto $\R$. By \eqref{eq3.13}
this implies that $H_\alpha$ maps $\CG_\alpha$ bijectively onto
$\R$. For any $\xi$ in $\R\setminus\{s_{\alpha}\}$ it follows then
by Lemma~\ref{non-tangential limit} that
\begin{equation}
\lim_{y\downarrow0}G_{\nu_\alpha}(\xi+\ri y)=\frac{1}{z},
\label{eq3.17}
\end{equation}
where $z$ is the unique point on $\CG_\alpha\setminus\{-c_\alpha\}$,
such that $H_\alpha(z)=\xi$. We may therefore conclude that
$S\subseteq\{s_\alpha\}$, and the proof of (i) is completed, if
we verify that $\nu_\alpha$ has no atom at $s_\alpha$. As
mentioned in Subsection~\ref{PL free ID} it follows from (the proof of)
\cite[Proposition~5.12]{BV} that $\nu_\alpha$ has at most one atom,
which, if it exists, is necessarily equal to the non-tangential limit of
the Voiculescu transform $\phi_{\nu_\alpha}(z)$ as $z\to0$, $z\in\C^+$.
Note here for $z$ in $\C^+$ that by \eqref{eqPL.6b} and 
Proposition~\ref{key formel}(i),
\[
\phi_{\nu_\alpha}(z)=z\CC_{\nu_\alpha}(\tfrac{1}{z})=\alpha
zG_{\mu_1}(z)=\alpha+\alpha\int_0^{\infty}\frac{t\e^{-t}}{z-t}\6t
\]
(cf.\ formula \eqref{eq3.12}).
Hence, by dominated convergence,
\[
\lim_{y\downarrow0}\phi_{\nu_\alpha}(\ri y)
=\lim_{y\downarrow0}\Big(\alpha
+\alpha\int_0^{\infty}\frac{t\e^{-t}}{\ri y-t}\6t\Big)
=\alpha-\alpha=0.
\]
It follows that the only possible atom for $\nu_\alpha$ is 0, and since
$s_\alpha>0$ (cf.\ Lemma~\ref{egensk P-alpha}(ii)), we
conclude that $\nu_\alpha$ has no singular part.

(ii) \ By Stieltjes inversion 
(cf.\ Subsection~\ref{PL Stieltjes inversion}), the formula 
\[
f_{\alpha}(\xi)=-\frac{1}{\pi}\lim_{y\downarrow0}\im(G_{\nu_\alpha}(\xi+\ri
y)),
\]
produces an almost everywhere defined density for $\nu_\alpha$ with
respect to Lebesgue 
measure. According to \eqref{eq3.17} we have for all $\xi$ in
$\R\setminus\{s_\alpha\}$ that
\[
f_{\alpha}(\xi)=-\frac{1}{\pi}\im\Big(\frac{1}{z}\Big),
\]
where $z$ is the unique point in $\CG_\alpha\setminus\{-c_\alpha\}$, such
that $H_\alpha(z)=\xi$. Writing $z=x+\ri v_\alpha(x)$ for some
(unique) $x$ in $\R\setminus\{-c_{\alpha}\}$, we have that
\[
\xi=H_\alpha(x+\ri v_\alpha(x))=P_\alpha(x), \quad\text{so that}\quad
x=P_\alpha^{\brinv}(\xi),
\]
and therefore
\[
f_{\alpha}(\xi)
=-\frac{1}{\pi}\im\Big(\frac{1}{x+\ri v_\alpha(x)}\Big)
=\frac{1}{\pi}\frac{v_\alpha(x)}{x^2+v_\alpha(x)^2}
=\frac{1}{\pi}Q_\alpha(x)=\frac{1}{\pi}Q_\alpha(P_\alpha^{\brinv}(\xi)).
\]
The proof of (ii) is completed by noting that if
$\xi<s_\alpha$, then $x<-c_{\alpha}$, so that $v_\alpha(x)=0$, and
therefore $f_\alpha(\xi)=0$ by the previous calculation.

(iii) \ This is an immediate consequence of (i), (ii) and the fact
that $v_\alpha$ (and hence $Q_\alpha$) is strictly positive on
$(-c_\alpha,\infty)$ (cf.\ Lemma~\ref{intro v-alpha}(iv)).

(iv) \ Since $v_\alpha$ is analytic on $(-c_\alpha,\infty)$
(cf.\ Lemma~\ref{egensk v-alpha}(i)), it follows immediately from
\eqref{eq3.1} that so is $Q_\alpha$. By (i) and (v) of Lemma~\ref{egensk
  P-alpha} the function $P_\alpha$ is analytic on $(-c_\alpha,\infty)$
with strictly positive derivative. This implies that $P^\brinv_\alpha$
is analytic on $(s_\alpha,\infty)$, and altogether we thus
conclude that 
$f_\alpha=\tfrac{1}{\pi}Q_\alpha\circ P_\alpha^\brinv$ is analytic on
$(s_\alpha,\infty)$. 
\end{proofof}

\section{Behavior at the limits of the support}\label{sec asympt opf}

In this section we study the behavior of the density $f_\alpha$ of
$\nu_\alpha$ around the lower bound $s_\alpha$ of the support
and at infinity. We start with the latter aspect.

\begin{proposition}\label{asympt opfoersel af taethed}
Let $\alpha$ be a positive number, and consider 
the density $f_\alpha$ of $\nu_\alpha$ (cf.\ \eqref{eq3.15}). We then have
\[
\lim_{\xi\to\infty}\frac{f_{\alpha}(\xi)}{\xi^{-1}\e^{-\xi}}
=\alpha\e^{\alpha}.
\]
\end{proposition}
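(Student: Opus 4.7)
The plan is to unwind the formula
\[
f_\alpha(\xi)=\frac{1}{\pi}Q_\alpha\bigl(P_\alpha^{\brinv}(\xi)\bigr)
=\frac{1}{\pi}\,\frac{v_\alpha(x)}{x^2+v_\alpha(x)^2}
\qquad\text{with } x=P_\alpha^{\brinv}(\xi),
\]
from Theorem~\ref{thm abs kont}(ii) and track $\xi\to\infty$ through the corresponding limit $x\to\infty$.

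First I would note that Lemma~\ref{egensk P-alpha}(v) guarantees that $P_\alpha^{\brinv}$ is a continuous bijection from $\R$ onto $\R$, so $x=P_\alpha^{\brinv}(\xi)\to\infty$ as $\xi\to\infty$. By Lemma~\ref{egensk P-alpha}(iv) we have $P_\alpha(x)=x+\alpha+o(1)$ for large $x$, which after inversion gives $x=\xi-\alpha+o(1)$. In particular $e^{-x}=e^{\alpha}e^{-\xi}(1+o(1))$ and $1/x=\xi^{-1}(1+o(1))$.

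Next, I would use Lemma~\ref{egensk v-alpha}(ii), i.e.\ $v_\alpha(x)\sim \alpha\pi\,xe^{-x}$ as $x\to\infty$. In particular $v_\alpha(x)\to 0$ and $v_\alpha(x)/x\to 0$, so
\[
\frac{v_\alpha(x)}{x^2+v_\alpha(x)^2}
=\frac{1}{x^2}\cdot\frac{v_\alpha(x)}{1+(v_\alpha(x)/x)^2}
\sim\frac{v_\alpha(x)}{x^2}\sim\frac{\alpha\pi\,e^{-x}}{x}.
\]
Combining this with the asymptotic relations for $x$, $e^{-x}$ and $1/x$ in terms of $\xi$ derived above,
\[
f_\alpha(\xi)=\frac{1}{\pi}\cdot\frac{v_\alpha(x)}{x^2+v_\alpha(x)^2}
\sim\frac{\alpha e^{-x}}{x}
\sim\frac{\alpha\,e^{\alpha}\,e^{-\xi}}{\xi},
\]
which yields the claimed limit $\alpha e^{\alpha}$.

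I do not anticipate any real obstacle here, since the two asymptotic ingredients (Lemma~\ref{egensk v-alpha}(ii) and Lemma~\ref{egensk P-alpha}(iv)) are exactly what is needed; the only mildly delicate point is to make sure that the $o(1)$ term in $x=\xi-\alpha+o(1)$ is really negligible inside $e^{-x}$, which follows because $e^{-x}=e^{-\xi+\alpha}\cdot e^{-o(1)}=e^{-\xi+\alpha}(1+o(1))$. Everything else is routine limit arithmetic on continuous functions, so the proof will be a short direct computation along the lines sketched above.
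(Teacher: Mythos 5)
Your proposal is correct and follows essentially the same route as the paper: substitute $x=P_\alpha^{\brinv}(\xi)$ and combine the asymptotics $v_\alpha(x)\sim\alpha\pi x\e^{-x}$ (Lemma~\ref{egensk v-alpha}(ii)) with $P_\alpha(x)=x+\alpha+o(1)$ (Lemma~\ref{egensk P-alpha}(iv)). The paper merely phrases the same computation as a limit of a product of factors evaluated at $\xi=P_\alpha(x)$ rather than inverting to write $x=\xi-\alpha+o(1)$, so the two arguments are equivalent.
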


\begin{proof} Consider the function $P_\alpha$ introduced in \eqref{eq3.13}.
Since $P_\alpha$ is a strictly increasing bijection of $\R$ onto $\R$,
it suffices to prove that 
\[
\lim_{x\to\infty}f_{\alpha}(P_\alpha(x))P_\alpha(x)\e^{P_\alpha(x)}=\alpha\e^{\alpha}.
\]
Using Theorem~\ref{thm abs kont}(ii), Lemma~\ref{egensk v-alpha}(iv)
and Lemma~\ref{egensk P-alpha}(iv), we find that
\begin{equation*}
\begin{split}
\lim_{x\to\infty}f_{\alpha}(P_\alpha(x))P_\alpha(x)\e^{P_\alpha(x)}
&=
\lim_{x\to\infty}\frac{1}{\pi}\frac{v_\alpha(x)}{x^2+v_\alpha(x)^2}
P_\alpha(x)\e^{P_\alpha(x)}
\\[.2cm]&=
\lim_{x\to\infty}\frac{1}{\pi}\Big(\frac{x^2}{x^2+v_\alpha(x)^2}\Big)
\Big(\frac{v_{\alpha}(x)}{x\e^{-x}}\Big)
\Big(\frac{P_\alpha(x)}{x}\Big)\e^{-x+P_\alpha(x)}
\\[.2cm]&=
\frac{1}{\pi}\cdot1\cdot\alpha\pi\cdot1\cdot\e^{\alpha}
=
\alpha\e^{\alpha},
\end{split}
\end{equation*}
as desired.
\end{proof}

We turn next to the behavior of $f_\alpha(\xi)$ as $\xi\downarrow
s_\alpha$. We study initially how $s_\alpha$ varies as a function of
$\alpha$.

\begin{proposition}\label{egensk s-alpha}
For any positive number $\alpha$ consider the
  function $H_\alpha$ and the quantities $c_\alpha$ and $s_\alpha$
  defined by \eqref{eq3.12},\eqref{eq3.10} and \eqref{eq3.13a},
  respectively. We then have 

\begin{enumerate}[i]

\item $H_\alpha$ satisfies the differential equation:
\[
H'_\alpha(z)=
\alpha+z+(z^{-1}-1)H_\alpha(z),\qquad(z\in\C\setminus[0,\infty)).
\]

\item $s_\alpha=\frac{c_\alpha}{1+c_\alpha}(\alpha-c_\alpha)$.

\item $H_\alpha''(-c_\alpha)=1-\frac{s_\alpha}{c_{\alpha}^2}<0$.

\item $\lim_{\alpha\to0}s_\alpha=0$, and
  $\lim_{\alpha\to\infty}s_\alpha=\infty$. 

\item $c_\alpha$ is an analytic function of $\alpha$, and
$\frac{\d c_\alpha}{\d\alpha}
=\frac{c_\alpha(1+c_\alpha)}{\alpha(\alpha-2c_\alpha-c_\alpha^2)}$.

\item $s_\alpha$ is an analytic function of $\alpha$, and
$\frac{\d s_\alpha}{\d\alpha}
=\frac{c_\alpha(\alpha+1)}{\alpha(1+c_\alpha)}$. In particular
$s_\alpha$ is a strictly increasing function of $\alpha$.

\end{enumerate}
\end{proposition}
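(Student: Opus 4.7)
The plan is to prove (i) first and then deduce everything else from it by algebraic manipulation combined with elementary one-variable calculus. For (i), I would start from the representation $H_\alpha(z)=z+\alpha+\alpha\int_0^\infty\frac{t\e^{-t}}{z-t}\6t$ in \eqref{eq3.12}, differentiate under the integral sign to get $H_\alpha'(z)-1=-\alpha\int_0^\infty\frac{t\e^{-t}}{(z-t)^2}\6t$, and then integrate by parts in this last integral with $u=t\e^{-t}$ and $\d v=(z-t)^{-2}\6t$. The boundary terms vanish and the remainder splits into $G_{\mu_1}(z)$ minus $\alpha^{-1}(H_\alpha(z)-z-\alpha)$. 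Substituting $\alpha G_{\mu_1}(z)=z^{-1}H_\alpha(z)-1$ (again from \eqref{eq3.12}) and collecting terms then yields the stated ODE.

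Parts (ii) and (iii) are then formal consequences. For (ii), I would evaluate (i) at $z=-c_\alpha$, where $H_\alpha'(-c_\alpha)=0$ by Lemma~\ref{H' ikke 0}, and solve for $s_\alpha=H_\alpha(-c_\alpha)$. For (iii), differentiating the ODE in (i) once more and evaluating at $-c_\alpha$ (again using $H_\alpha'(-c_\alpha)=0$) gives $H_\alpha''(-c_\alpha)=1-s_\alpha/c_\alpha^2$. Separately, evaluating the formula for $H_\alpha''$ from \eqref{eq3.9} at $z=-c_\alpha$ yields $H_\alpha''(-c_\alpha)=-2\alpha\int_0^\infty t\e^{-t}(c_\alpha+t)^{-3}\6t<0$, and combining the two expressions forces $s_\alpha>c_\alpha^2$ strictly.

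For (iv), the $\alpha\to\infty$ limit follows at once from (iii) combined with $c_\alpha\to\infty$ (Lemma~\ref{intro v-alpha}(i)), while the $\alpha\downarrow0$ limit is immediate from the explicit expression in (ii) together with $c_\alpha\to0$. For (v), I would apply the implicit function theorem to the defining equation $\alpha F(-c_\alpha)=1$; its partial derivative in $c$ equals $-2\alpha\int_0^\infty t\e^{-t}(c+t)^{-3}\6t\neq 0$, so $c_\alpha$ depends real-analytically on $\alpha$. Implicit differentiation expresses $\d c_\alpha/\d\alpha$ as the reciprocal of $2\alpha^2$ times this cubic integral, which by (iii) equals $(s_\alpha-c_\alpha^2)/(2\alpha c_\alpha^2)$; inserting (ii) for $s_\alpha-c_\alpha^2$ then produces the stated formula. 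Finally, (vi) is a straightforward quotient-rule differentiation of $s_\alpha=c_\alpha(\alpha-c_\alpha)/(1+c_\alpha)$ using (v); the factor $\alpha-2c_\alpha-c_\alpha^2$ in the denominator of $\d c_\alpha/\d\alpha$ cancels cleanly against the numerator, leaving $\d s_\alpha/\d\alpha=c_\alpha(\alpha+1)/[\alpha(1+c_\alpha)]$, which is manifestly positive. The only non-routine step is the integration by parts in (i); everything else is bookkeeping, helped by the fact that $H_\alpha''(-c_\alpha)<0$ comes for free from the integral representation in \eqref{eq3.9}.
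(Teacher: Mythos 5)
Your proposal is correct and follows essentially the same route as the paper: differentiate an integral representation of $H_\alpha$ and integrate by parts to obtain the ODE in (i), evaluate it and its derivative at $-c_\alpha$ for (ii)--(iii), use (ii)/(iii) with $c_\alpha\to0,\infty$ for (iv), and apply the (analytic) implicit function theorem to the defining equation of $c_\alpha$ together with a chain-rule computation for (v)--(vi). The only cosmetic differences are which representation in \eqref{eq3.12} you differentiate before integrating by parts and that you express the derivative of $F(-c)$ through the cubic integral and (iii) rather than directly via $H_\alpha''(-c_\alpha)$ as in the paper's \eqref{eq4.7}.
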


\begin{proof} (i) \ Differentiation in the first equality in
  \eqref{eq3.12} and partial integration leads to
\begin{equation*}
\begin{split}
H_\alpha'(z)&=1+\alpha\int_0^{\infty}\frac{\e^{-t}}{z-t}\6t
-\alpha z\int_0^\infty\frac{\e^{-t}}{(z-t)^2}\6t
\\[.2cm]
&=z^{-1}H_\alpha(z)-\alpha z
\Big(\Big[\frac{\e^{-t}}{z-t}\Big]_0^{\infty}
+\int_0^\infty\frac{\e^{-t}}{z-t}\6t\Big)
\\[.2cm]
&=z^{-1}H_\alpha(z)+\alpha-(H_\alpha(z)-z)
=\alpha+z+(z^{-1}-1)H_\alpha(z)
\end{split}
\end{equation*}
for all $z$ in $\C\setminus[0,\infty)$.

(ii) \ From Lemma~\ref{H' ikke 0}, (i) and \eqref{eq3.13a} it follows that
\[
0=H_{\alpha}'(-c_\alpha)=\alpha-c_\alpha+(-c_{\alpha}^{-1}-1)
H_\alpha(-c_{\alpha}) 
=\alpha-c_\alpha-\tfrac{1+c_\alpha}{c_\alpha}s_\alpha,
\]
from which (i) follows immediately.

(iii) \ Differentiation in (i) leads to the
formula:
\[
H_\alpha''(z)=1-z^{-2}H_\alpha(z)+(z^{-1}-1)H'_\alpha(z),
\qquad(z\in\C\setminus[0,\infty)).
\]
Combining this with Lemma~\ref{H' ikke 0}, we find that
\[
H_\alpha''(-c_\alpha)=1-c_\alpha^{-2}H_\alpha(-c_\alpha)+0
=1-c_\alpha^{-2}s_\alpha.
\]
At the same time it follows from \eqref{eq3.9} that
\[
H_\alpha''(-c_\alpha)=2\alpha\int_0^{\infty}\frac{t\e^{-t}}{(-c_\alpha-t)^3}\6t
=-2\alpha\int_0^{\infty}\frac{t\e^{-t}}{(c_\alpha+t)^3}\6t<0,
\]
and thus (iii) is established.

(iv) \ From Lemma~\ref{intro v-alpha}(i) we know that $c_\alpha\to0$
as $\alpha\to0$, and that $c_\alpha\to\infty$ as $\alpha\to\infty$. In
combination with (ii) and (iii), respectively, it follows that
$s_\alpha$ has the same properties.

(v) \  For any $x$ in $(0,\infty)$ it follows from
\eqref{eq3.9}-\eqref{eq3.0} that
\begin{equation*}
H_\alpha'(-x)=1-\alpha\int_0^{\infty}\frac{t\e^{-t}}{(x+t)^2}\6t
=1-\alpha F(-x), 
\end{equation*}
so that
\begin{equation}
F(-x)=\frac{1}{\alpha}(1-H'_\alpha(-x)), \qand
F'(-x)=-\frac{1}{\alpha}H''_{\alpha}(-x), \qquad(x\in(0,\infty)).
\label{eq4.7}
\end{equation}
Using (iii) and (ii) we find thus that
\begin{equation*}
F'(-c_\alpha)=-\frac{1}{\alpha}H''_{\alpha}(-c_\alpha)
=\frac{1}{\alpha}(c_\alpha^{-2}s_\alpha-1)
=\frac{1}{\alpha}
\Big(\frac{\alpha-c_{\alpha}}{c_{\alpha}(1+c_\alpha)}-1\Big)
=\frac{\alpha-2c_\alpha-c_\alpha^2}{\alpha c_\alpha(1+c_\alpha)}.
\end{equation*}
In particular we see from (iii) that $F'(-c_\alpha)>0$, and \eqref{eq4.7}
shows that $F$ is analytic on $(-\infty,0)$. From the defining formula:
$F(-c_\alpha)=\frac{1}{\alpha}$
and the implicit function theorem (for analytic functions; see
\cite[Theorem~7.6]{FG}) it follows thus 
that $c_\alpha$ is an analytic function of $\alpha$ with
derivative given by
\[
\frac{\d c_\alpha}{\d\alpha}
=\frac{1}{\alpha^2F'(-c_\alpha)} 
=\frac{c_\alpha(1+c_\alpha)}{\alpha(\alpha-2c_\alpha-c_\alpha^2)},
\qquad(\alpha\in(0,\infty)).
\]

(vi) \ From (ii) and (v) it is clear that $s_\alpha$ is an analytic
function of $\alpha$. Consider now the function
$S\colon(0,\infty)\times(0,\infty)\to\R$ given by
\[
S(c,\alpha)=\frac{c}{1+c}(\alpha-c), \qquad(c,\alpha\in(0,\infty)),
\]
and note that $s_\alpha=S(c_\alpha,\alpha)$ for all positive
$\alpha$, and that
\[
\frac{\partial S}{\partial c}(c,\alpha)
=\frac{\alpha-2c-c^2}{(1+c)^2}, \qand
\frac{\partial S}{\partial\alpha}(c,\alpha)=\frac{c}{1+c},
\qquad (c,\alpha\in(0,\infty)).
\]
Using the chain rule and (v) it follows thus that
\begin{equation*}
\begin{split}
\frac{\d s_\alpha}{\d\alpha}
&=\frac{\partial S}{\partial c}(c_\alpha,\alpha)\frac{\d c_\alpha}{\d\alpha}
+\frac{\partial S}{\partial\alpha}(c_\alpha,\alpha)
=\frac{\alpha-2c_\alpha-c_\alpha^2}{(1+c_\alpha)^2}\cdot
\frac{c_\alpha(1+c_\alpha)}{\alpha(\alpha-2c_\alpha-c_\alpha^2)}
+\frac{c_\alpha}{1+c_\alpha}
\\[.2cm]
&=\frac{c_\alpha}{\alpha(1+c_\alpha)}+\frac{c_\alpha}{1+c_\alpha}
=\frac{c_\alpha(\alpha+1)}{\alpha(1+c_\alpha)},
\end{split}
\end{equation*}
and this completes the proof.
\end{proof}

Let $a$ be a real number contained in an interval $I$.
For functions $g,h\colon I\to\C$, such that $0\notin h(I)$,
we use in the following proposition the notation: ``$g(x)\sim h(x)$ as
$x\to a$'' to express that $\lim_{x\to a}\frac{g(x)}{h(x)}=1$.

\begin{proposition}
For any positive number $\alpha$ we put
\[
\gamma_\alpha=\frac{6H_\alpha''(-c_\alpha)}{H_\alpha'''(-c_\alpha)}.
\]
Then $\gamma_\alpha>0$, and we have that

\begin{enumerate}[i]

\item $v_\alpha(x)\sim \gamma_\alpha^{1/2}(x+c_\alpha)^{1/2}$, as
  $x\downarrow -c_\alpha$.

\item $\displaystyle{\lim_{x\downarrow
    -c_\alpha}\frac{P_\alpha(x)-s_\alpha}{x+c_\alpha}
=-\tfrac{1}{2}\gamma_\alpha H_\alpha''(-c_\alpha)>0}$.

\item $f_\alpha(\xi)\sim\frac{\sqrt{2}}
{\pi c_\alpha\sqrt{s_\alpha-c_\alpha^2}} 
(\xi-s_\alpha)^{1/2}$, as $\xi\downarrow s_\alpha$.

\end{enumerate}
\end{proposition}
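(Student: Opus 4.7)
The plan is to obtain (i) and (ii) by Taylor expanding $H_\alpha$ around $-c_\alpha$ and using the two defining conditions for $v_\alpha$ and $P_\alpha$ near that point, and then combine (i), (ii) with the formula for $f_\alpha$ from Theorem~\ref{thm abs kont}(ii) to get (iii).

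First I would verify that $\gamma_\alpha > 0$. From formula \eqref{eq3.9} we have
\[
H_\alpha'''(-c_\alpha)=-6\alpha\int_0^\infty\frac{t\e^{-t}}{(c_\alpha+t)^4}\6t<0,
\]
while $H_\alpha''(-c_\alpha)<0$ by Proposition~\ref{egensk s-alpha}(iii); hence $\gamma_\alpha>0$. Next, set $h=x+c_\alpha\downarrow 0$ and $v=v_\alpha(x)$, so that $w:=h+\ri v$ satisfies $w\to 0$. Since $H_\alpha'(-c_\alpha)=0$ (Lemma~\ref{H' ikke 0}), Taylor's formula gives
\[
H_\alpha(-c_\alpha+w)-s_\alpha
=\tfrac{1}{2}H_\alpha''(-c_\alpha)w^2+\tfrac{1}{6}H_\alpha'''(-c_\alpha)w^3+O(|w|^4).
\]
By Lemma~\ref{intro v-alpha}(iii) the left hand side is real, so taking imaginary parts and using $\im w^2=2hv$, $\im w^3=3h^2v-v^3$, we may divide by $v>0$ (by Lemma~\ref{intro v-alpha}(iv)) and rearrange to obtain
\[
\tfrac{1}{6}H_\alpha'''(-c_\alpha)v^2
=H_\alpha''(-c_\alpha)h+\tfrac{1}{2}H_\alpha'''(-c_\alpha)h^2+O(|w|^3).
\]
A priori this forces $v=O(h^{1/2})$, hence $|w|=O(h^{1/2})$, so the remainder is $O(h^{3/2})$ and division by $\tfrac{1}{6}H_\alpha'''(-c_\alpha)$ yields $v^2=\gamma_\alpha h+O(h^{3/2})$, which proves (i).

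For (ii), take real parts of the same Taylor expansion: using $\re w^2=h^2-v^2$, $\re w^3=h^3-3hv^2$ we get
\[
P_\alpha(x)-s_\alpha
=-\tfrac{1}{2}H_\alpha''(-c_\alpha)v^2+O(h^2),
\]
since every other displayed term is $O(h^2)$ and $|w|^4=O(h^2)$. Dividing by $h$ and applying (i) gives
\[
\lim_{x\downarrow -c_\alpha}\frac{P_\alpha(x)-s_\alpha}{h}
=-\tfrac{1}{2}H_\alpha''(-c_\alpha)\gamma_\alpha,
\]
which is strictly positive because $\gamma_\alpha>0$ and $H_\alpha''(-c_\alpha)<0$; this is (ii).

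Finally, for (iii), write $x=P_\alpha^{\brinv}(\xi)$, so that $x\downarrow -c_\alpha$ as $\xi\downarrow s_\alpha$. By Theorem~\ref{thm abs kont}(ii),
\[
f_\alpha(\xi)=\frac{1}{\pi}\,\frac{v_\alpha(x)}{x^2+v_\alpha(x)^2},
\]
and $x^2+v_\alpha(x)^2\to c_\alpha^2$. Combining (i) and (ii) gives
\[
v_\alpha(x)\sim\gamma_\alpha^{1/2}h^{1/2}
\sim\gamma_\alpha^{1/2}\Big(\frac{\xi-s_\alpha}{-\tfrac{1}{2}\gamma_\alpha H_\alpha''(-c_\alpha)}\Big)^{1/2}
=\frac{\sqrt{2}}{\sqrt{-H_\alpha''(-c_\alpha)}}\,(\xi-s_\alpha)^{1/2}.
\]
Thus $f_\alpha(\xi)\sim\frac{\sqrt{2}}{\pi c_\alpha^2\sqrt{-H_\alpha''(-c_\alpha)}}\,(\xi-s_\alpha)^{1/2}$, and the last step is to rewrite the constant using Proposition~\ref{egensk s-alpha}(iii), which gives $-H_\alpha''(-c_\alpha)=(s_\alpha-c_\alpha^2)/c_\alpha^2$ (in particular $s_\alpha>c_\alpha^2$), so that $c_\alpha^2\sqrt{-H_\alpha''(-c_\alpha)}=c_\alpha\sqrt{s_\alpha-c_\alpha^2}$ and (iii) follows. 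The main subtlety is the bootstrapping step at the end of the proof of (i): one must first deduce the a priori bound $v=O(h^{1/2})$ from the order-of-magnitude structure of the equation before the error term $O(|w|^3)$ can be controlled to yield the sharp asymptotic; everything after that is routine.
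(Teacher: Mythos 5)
Your parts (ii) and (iii) coincide with the paper's argument (Taylor expansion of the real part, then the density formula and Proposition~\ref{egensk s-alpha}(iii) to rewrite the constant), but your part (i) is a genuinely different route: the paper never expands to third order, it instead uses the explicit formula \eqref{eq4.6} for $v_\alpha'$ to compute $\lim_{x\downarrow-c_\alpha}\frac{\d}{\d x}v_\alpha(x)^2=\gamma_\alpha$ and concludes by the mean value theorem, which requires no remainder estimates at all. Your Taylor/imaginary-part approach can be made to work, but as written it has one unjustified step, and it is precisely the step that carries the whole weight of (i). Writing $h=x+c_\alpha$, $v=v_\alpha(x)$, $w=h+\ri v$ and $R(w)$ for the $O(|w|^4)$ remainder, taking imaginary parts and dividing by $v$ produces the error $\im R(w)/v$, which a priori is only $O(|w|^4/v)$; your claimed bound $O(|w|^3)$ would require $v$ to be at least of the order of $|w|$, which is not available at this stage --- it is essentially what you are proving. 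With only $|\im R(w)|\le C|w|^4$ your bootstrap does give $v=O(h^{1/2})$, but then the equation (multiplied back by $v$ and divided by $h^{3/2}$) only shows that every subsequential limit $c$ of $v/h^{1/2}$ satisfies $c\big(H_\alpha''(-c_\alpha)-\tfrac16H_\alpha'''(-c_\alpha)c^2\big)=0$, i.e.\ $c=0$ or $c=\gamma_\alpha^{1/2}$; the degenerate possibility $v=o(h^{1/2})$ is not excluded, so the asymptotic in (i) does not yet follow.

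The repair is short but must be stated. Since $H_\alpha$ is real-valued on $(-\infty,0)$, its Taylor coefficients at $-c_\alpha$ are real, so $\overline{R(w)}=R(\bar w)$ and $\im R(w)=\tfrac{1}{2\ri}\big(R(w)-R(\bar w)\big)$; estimating the difference along the segment from $\bar w$ to $w$ gives $|\im R(w)|\le v\sup_{|\zeta|\le|w|}|R'(\zeta)|=O\big(v\,|w|^3\big)$, which after division by $v$ is exactly the $O(|w|^3)$ you used, and then your bootstrap (the part you call the main subtlety, which is in fact the easy half) closes the argument. Alternatively, within your own framework you could rule out the branch $c=0$ by the real part: if $v=o(h^{1/2})$ along a sequence, then $P_\alpha(x)-s_\alpha=\tfrac12H_\alpha''(-c_\alpha)h^2(1+o(1))<0$ for small $h$, contradicting the strict monotonicity of $P_\alpha$ from Lemma~\ref{egensk P-alpha}(v); or simply prove (i) as the paper does, via \eqref{eq4.6}.
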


\begin{proof} Using formula \eqref{eq3.9} we note first for any $k$ in
  $\{2,3,4,\ldots\}$ that
\begin{equation}
H_\alpha^{(k)}(-c_\alpha)
=-\alpha k!\int_0^{\infty}\frac{t\e^{-t}}{(t+c_\alpha)^{k+1}}\6t<0,
\label{eq4.7a}
\end{equation}
and in particular this verifies that $\gamma_\alpha>0$. 

(i) \ Using formula \eqref{eq4.6} (in Appendix~\ref{tekniske beviser})
we find that
\[
\frac{\d}{\d x}\big(v_\alpha(x)^2\big)=2v_\alpha(x)v_\alpha'(x)
\underset{x\downarrow -c_\alpha}{\longrightarrow}
\frac{2\int_0^{\infty}\frac{t\e^{-t}}{(t+c_\alpha)^3}\6t}
{\int_0^{\infty}\frac{t\e^{-t}}{(t+c_\alpha)^4}\6t}=\gamma_\alpha,
\]
where the last equality results from \eqref{eq4.7a}. Since
$v_\alpha(-c_\alpha)=0$, it follows from the above calculation and the
mean value theorem that $v_\alpha(x)^2\sim\gamma_\alpha(x+c_\alpha)$
as $x\downarrow -c_\alpha$, and this proves (i).

(ii) \ Using that $H_{\alpha}(-c_\alpha)=s_\alpha$ and
$H_\alpha(-c_\alpha)=0$ (cf.\ Lemma~\ref{H' ikke 0}) we find by Taylor
expansion that
\[
P_{\alpha}(x)=\re\big(H_\alpha(x+\ri v_\alpha(x))\big)
=\re\Big(s_\alpha+\tfrac{1}{2}H_\alpha''(-c_\alpha)
\big(x+c_\alpha+\ri v_\alpha(x)\big)^2+
o\big(|x+c_\alpha+\ri v_\alpha(x)|^2\big)\Big)
\]
and hence by application of (i),
\[
\frac{P_\alpha(x)-s_\alpha}{x+c_\alpha}
=\frac{1}{2}H_\alpha''(-c_\alpha)\Big(x+c_\alpha
-\frac{v_\alpha(x)^2}{x+c_\alpha}\Big)
+\frac{o((x+c_\alpha)^2+v_\alpha(x)^2)}{x+c_\alpha}
\longrightarrow
-\tfrac{1}{2}H_\alpha''(-c_\alpha)\gamma_\alpha,
\]
as $x\downarrow -c_\alpha$. Since $\gamma_\alpha>0$, formula
\eqref{eq4.7a} shows that the resulting expression above is positive,
and hence (ii) is established.

(iii) \ Recall from Theorem~\ref{thm abs kont} that
\[
f_\alpha(P_\alpha(x))=\frac{1}{\pi}\frac{v_\alpha(x)}{x^2+v_\alpha(x)^2},
\qquad(x\in[-c_\alpha,\infty)).
\]
By application of (i) it follows thus that
\begin{equation}
f_\alpha(P_\alpha(x))\sim\frac{\gamma_\alpha^{1/2}}{\pi
  c_\alpha^2}(x+c_\alpha)^{1/2}, \quad\text{as $x\downarrow
  -c_\alpha$}.
\label{eq4.7b}
\end{equation}
Using (ii) we have also that
\[
\lim_{\xi\downarrow s_\alpha}
\Big(\frac{\xi-s_\alpha}{P_\alpha^\brinv(\xi)+c_\alpha}\Big)
=\Big(\frac{P_\alpha(P_\alpha^\brinv(\xi))-s_\alpha}
{P_\alpha^\brinv(\xi)+c_\alpha}\Big)
=-\tfrac{1}{2}\gamma_\alpha H_\alpha''(-c_\alpha),
\]
and hence
\[
P_\alpha^\brinv(\xi)+c_\alpha\sim 
-\frac{2}{\gamma_\alpha H_\alpha''(-c_\alpha)}(\xi-s_\alpha), 
\quad\text{as $\xi\downarrow -s_\alpha$.}
\]
Combining this with \eqref{eq4.7b} we find that
\[
f_\alpha(\xi)
\sim\frac{\gamma_\alpha^{1/2}(P_\alpha^\brinv(\xi)+c_\alpha)^{1/2}}
{\pi c_\alpha^2}
\sim\frac{\sqrt{2}}{\pi c_\alpha^2(-H_\alpha''(-c_\alpha))^{1/2}}
(\xi-s_\alpha)^{1/2},
\]
as $\xi\downarrow s_\alpha$. Applying finally Proposition~\ref{egensk
  s-alpha}(iii), we obtain (iii).
\end{proof}

\section{Unimodality}\label{sec unimodal}

In this section we establish unimodality of the densities
$f_\alpha$. We start with a few preparatory results.

\begin{lemma}\label{key lemma}
For each positive number $R$, let $\Phi_R\colon(0,\pi)\to(0,\infty)$ be
the function given by
\[
\Phi_R(\theta)=F(R\sin(\theta)\e^{\ri\theta}), \qquad(\theta\in(0,\pi)),
\]
where $F$ is the function introduced in \eqref{eq3.0}.
Then for any $R$ in $(0,\infty)$ there exists a unique number
$\theta_R$ in $(0,\pi)$ such that $\Phi_R$ is strictly decreasing on
$(0,\theta_R]$ and strictly increasing on $[\theta_R,\pi)$.
\end{lemma}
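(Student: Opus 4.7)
The approach is to study the derivative $\Phi_R'$ directly after a convenient reformulation. The substitution $t = R\sin\theta\cdot s$ in the defining integral \eqref{eq3.0} transforms $\Phi_R$ into
\begin{equation*}
\Phi_R(\theta) = \int_0^\infty\frac{s\,e^{-Rs\sin\theta}}{|s-e^{\ri\theta}|^2}\,\d s = \int_0^\infty\frac{s\,e^{-Rs\sin\theta}}{s^2 - 2s\cos\theta + 1}\,\d s.
\end{equation*}
Since $|s-e^{\ri\theta}|^2 = (s-\cos\theta)^2+\sin^2\theta \ge \sin^2\theta>0$ on $(0,\pi)$, the integrand is real-analytic in $\theta$ and differentiation under the integral sign is justified, so $\Phi_R$ itself is real-analytic on $(0,\pi)$. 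As $\theta\downarrow 0$ or $\theta\uparrow\pi$ the denominator degenerates while the exponential weight stays bounded, producing a non-integrable singularity; consequently $\Phi_R(\theta)\to\infty$ at both endpoints and $\Phi_R$ attains its infimum in the interior.

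A direct computation of the derivative gives
\begin{equation*}
\Phi_R'(\theta) = -\int_0^\infty\frac{s^2 e^{-Rs\sin\theta}}{(s^2-2s\cos\theta+1)^2}\bigl[R\cos\theta\,(s^2-2s\cos\theta+1) + 2\sin\theta\bigr]\,\d s.
\end{equation*}
For $\theta\in(0,\pi/2]$ both $R\cos\theta$ and $2\sin\theta$ are non-negative and not simultaneously zero, so the bracket is strictly positive and $\Phi_R'(\theta)<0$. Hence $\Phi_R$ is strictly decreasing on $(0,\pi/2]$, every critical point lies in $(\pi/2,\pi)$, and the existence of at least one such critical point follows from the blow-up at $\pi$.

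The principal obstacle, which I expect to be the hardest step, is the \emph{uniqueness} of the critical point in $(\pi/2,\pi)$. My plan is to prove the stronger assertion that $\Phi_R''(\theta^*)>0$ at every critical point $\theta^*\in(\pi/2,\pi)$. Combined with real-analyticity and the blow-up at the endpoints, this forces uniqueness: two distinct strict local minima of a real-analytic function with this boundary behaviour would have to be separated by a local maximum, contradicting the strict-local-minimum property at every critical point. To establish $\Phi_R''(\theta^*)>0$ I will differentiate the formula for $\Phi_R'$ once more and use the critical-point identity
\[
R\cos\theta^*\int_0^\infty\frac{s^2 e^{-Rs\sin\theta^*}}{s^2-2s\cos\theta^*+1}\,\d s = -2\sin\theta^*\int_0^\infty\frac{s^2 e^{-Rs\sin\theta^*}}{(s^2-2s\cos\theta^*+1)^2}\,\d s
\]
to eliminate one of the integrals appearing in $\Phi_R''(\theta^*)$. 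The positivity of the remaining combination should emerge from a Cauchy--Schwarz inequality applied to the positive weight $s^2 e^{-Rs\sin\theta^*}\,\d s$ against the two non-proportional reciprocals $(s^2-2s\cos\theta^*+1)^{-1}$ and $(s^2-2s\cos\theta^*+1)^{-2}$. Once $\Phi_R'$ has been shown to admit a unique zero $\theta_R$, the asserted strict monotonicity of $\Phi_R$ on $(0,\theta_R]$ and $[\theta_R,\pi)$ is immediate, since $\Phi_R'$ is then of constant sign on each of the open intervals $(0,\theta_R)$ and $(\theta_R,\pi)$, negative on the first by the bracket analysis above and positive on the second by the blow-up at $\pi$.
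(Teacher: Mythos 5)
Your reformulation of $\Phi_R$, the formula for $\Phi_R'$, and the observation that the bracket $R\cos\theta\,(s^2-2s\cos\theta+1)+2\sin\theta$ is strictly positive for $\theta\in(0,\pi/2]$ are all correct, and the blow-up of $\Phi_R$ at the endpoints is true (though your stated mechanism at $\theta\uparrow\pi$ is off: there the denominator tends to the harmless $(s+1)^2$, and the divergence comes from the loss of exponential decay, the integrand tending to $s/(s+1)^2$, which fails to be integrable at $s=\infty$; Fatou still gives the conclusion). The genuine gap is that the heart of the lemma --- uniqueness of the critical point, equivalently the claim that $\Phi_R$ switches sign of monotonicity only once --- is never proved: it is announced as a plan (``My plan is to prove\dots'', ``should emerge from a Cauchy--Schwarz inequality''), not carried out. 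Moreover the sketch does not line up as stated: differentiating your formula for $\Phi_R'$ produces, with $\sigma=\sin\theta$, $c=\cos\theta$, $A_k=\int_0^\infty s^2\e^{-Rs\sigma}(s^2-2sc+1)^{-k}\,\d s$ and $B_k=\int_0^\infty s^3\e^{-Rs\sigma}(s^2-2sc+1)^{-k}\,\d s$, the expression $\Phi_R''=R\sigma A_1-2cA_2+R^2c^2B_1+4Rc\sigma B_2+8\sigma^2B_3$, in which the problematic term involves the weight $s^3$, whereas your critical-point identity and your proposed Cauchy--Schwarz pairing both involve the weight $s^2$; so ``eliminating one integral'' as described does not directly produce the positivity you need. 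The plan is salvageable --- Cauchy--Schwarz with weight $s^3\e^{-Rs\sigma}\,\d s$ gives $B_2\le\sqrt{B_1B_3}$, and since $x^2-4xy+8y^2=(x-2y)^2+4y^2>0$ one gets $\Phi_R''>0$ on all of $[\pi/2,\pi)$ (where $c\le0$), with no critical-point identity needed --- but until such a computation is actually written out, the proposal does not prove the lemma.

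For comparison, the paper avoids second derivatives altogether: after the same change of variables it substitutes $s=\cos\theta$, so that $\Phi_R(\theta)=\Psi_R(\cos\theta)$ with $\Psi_R(s)=\int_0^\infty u\e^{-uR\sqrt{1-s^2}}(1-2us+u^2)^{-1}\,\d u$, and then shows each integrand is strictly log-convex in $s$ by an explicit computation of $\frac{\d^2}{\d s^2}\ln\psi_{R,u}(s)=uR(1-s^2)^{-3/2}+4u^2(1-2us+u^2)^{-2}>0$. Hence $\Psi_R'$ is strictly increasing, so $\Psi_R$ is either monotone or strictly decreasing then strictly increasing, and the monotone cases are excluded by Fatou's lemma at $s\to\pm1$. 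That route replaces your hardest step by a one-line convexity check of a single integrand, which is why the paper's proof needs neither the sign analysis on $(0,\pi/2]$ nor any identity at critical points; if you complete your argument as indicated above you obtain a correct but somewhat more computational alternative.
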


\begin{proof} We note first that for any $r$ in $(0,\infty)$ and
  $\theta$ in $(-\pi,\pi]$ we have, using the change of variables
  $t=ru$, that
\begin{equation*}
\begin{split}
F(r\e^{\ri\theta})&=\int_0^{\infty}\frac{t\e^{-t}}{(r\cos(\theta)-t)^2
+r^2\sin^2(\theta)}\6t
=\int_0^{\infty}\frac{ru\e^{-ru}}{r^2(\cos(\theta)-u)^2
+r^2\sin^2(\theta)}r\6u
\\[.2cm]
&=\int_0^{\infty}\frac{u\e^{-ru}}{1-2u\cos(\theta)+u^2}\6u.
\end{split}
\end{equation*}
Hence, for a fixed positive number $R$ we have that
\[
\Phi_R(\theta)=F(R\sin(\theta)\e^{\ri\theta})=
\int_0^\infty\frac{u\e^{-uR\sin(\theta)}}{1-2u\cos(\theta)+u^2}\6u,
\qquad(\theta\in(0,\pi)).
\]
Then define the function $\Psi_R\colon(-1,1)\to(0,\infty)$ by
\[
\Psi_R(s)=\int_0^\infty\frac{u\e^{-uR\sqrt{1-s^2}}}{1-2us+u^2}\6u,
\qquad(s\in(-1,1)),
\]
so that $\Phi_R(\theta)=\Psi_R(\cos(\theta))$ for $\theta$ in
$(0,\pi)$. 
Since the function $\theta\mapsto\cos(\theta)$ is strictly decreasing
on $(0,\pi)$, it suffices then to show that $\Psi_R$ is strictly
decreasing on $(-1,\eta_R]$ and strictly increasing on $[\eta_R,1)$
for some number $\eta_R$ in $(-1,1)$. For this we consider for any
$u$ in $(0,\infty)$ the function
$\psi_{R,u}\colon(-1,1)\to(0,\infty)$ given by
\[
\psi_{R,u}(s)=\frac{u\e^{-uR\sqrt{1-s^2}}}{1-2us+u^2},
\qquad(s\in(-1,1)).
\]
By a standard application of the theorem on differentiation under the
integral sign, it follows that $\Psi_R$ is differentiable on $(-1,1)$
with derivative
\begin{equation}
\Psi_{R}'(s)=\int_0^{\infty}\frac{\d}{\d s}\psi_{R,u}(s)\6u,
\qquad(s\in(-1,1)).
\label{eq3.3a}
\end{equation}
For any $u$ in $(0,\infty)$ and $s$ in $(-1,1)$ we note further that
\begin{equation*}
\begin{split}
\tfrac{\d}{\d s}\ln(\psi_{R,u}(s))
&=\tfrac{\d}{\d s}\big(\ln(u)-uR\sqrt{1-s^2}-\ln(1-2us+u^2)\big)
\\[.2cm]
&=uRs(1-s^2)^{-1/2}+2u(1-2us+u^2)^{-1},
\end{split}
\end{equation*}
so that
\begin{equation*}
\begin{split}
\tfrac{\d^2}{\d s^2}\ln(\psi_{R,u}(s))
&=uR(1-s^2)^{-1/2}+uRs^2(1-s^2)^{-3/2}+4u^2(1-2us+u^2)^{-2}
\\[.2cm]
&=uR(1-s^2)^{-3/2}+4u^2(1-2us+u^2)^{-2}>0.
\end{split}
\end{equation*}
Since
\[
\frac{\d^2}{\d s^2}\ln(\psi_{R,u}(s))
=\frac{\psi_{R,u}''(s)}{\psi_{R,u}(s)}
-\frac{\psi_{R,u}'(s)^2}{\psi_{R,u}(s)^2},
\]
we may thus conclude that $\psi_{R,u}''>0$ and hence that
$\psi_{R,u}'$ is strictly increasing on $(-1,1)$. Since this holds for
all $u$ in $(0,\infty)$, it follows further from \eqref{eq3.3a} that
$\Psi_R'$ is \emph{strictly} increasing on $(-1,1)$. Thus, $\Psi_R$ is
either strictly increasing, strictly decreasing or of the form
asserted above. However, by Fatou's Lemma,
\[
\liminf_{s\uparrow 1}\Psi_R(s)\ge\int_0^\infty\frac{u}{(1-u)^2}\6u=\infty,
\qand 
\liminf_{s\downarrow -1}\Psi_R(s)\ge\int_0^{\infty}\frac{u}{(1+u)^2}\6u=\infty,
\]
and hence $\Psi_R$ must have the claimed form.
\end{proof}

\begin{lemma}\label{ligninger}
Let $\alpha$ be a strictly positive number, and consider
  the functions $Q_\alpha$, $P_\alpha$ and $f_\alpha$ given in
  \eqref{eq3.1}, \eqref{eq3.14} and \eqref{eq3.15}. We then have

\begin{enumerate}[i]

\item For any $\rho$ in $(0,\infty)$ the equation:
\[
Q_{\alpha}(x)=\rho
\]
has at most two solutions in $(-c_{\alpha},\infty)$.

\item For any $\rho$ in $(0,\infty)$ the equation:
\[
f_\alpha(\xi)=\rho
\]
has at most two solutions in $(s_{\alpha},\infty)$.

\end{enumerate}
\end{lemma}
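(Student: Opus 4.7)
The plan is to reduce part (i) directly to Lemma~\ref{key lemma} by identifying the level sets of $Q_\alpha$ geometrically with those of the function $\Phi_R$ appearing there, and then to obtain (ii) as an immediate corollary of (i) via the bijection $P_\alpha^{\brinv}\colon(s_\alpha,\infty)\to(-c_\alpha,\infty)$ furnished by Lemma~\ref{egensk P-alpha}(v).

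First, I would identify the level set $\{Q_\alpha=\rho\}$ geometrically. Writing $z=x+\ri v_\alpha(x)$, one has $Q_\alpha(x)=\im(z)/|z|^2=-\im(1/z)$, so the equation $Q_\alpha(x)=\rho$ is equivalent to requiring that $z$ lie on the circle
\[
C_\rho=\bigl\{w\in\C\bigm| \bigl|w-\tfrac{\ri}{2\rho}\bigr|=\tfrac{1}{2\rho}\bigr\},
\]
which is tangent to the real axis at $0$ and otherwise sits in $\C^+$. Setting $R=1/\rho$, the direct computation $|R\sin(\theta)\e^{\ri\theta}-\ri R/2|^2=R^2/4$ shows that $\theta\mapsto R\sin(\theta)\e^{\ri\theta}$ parametrises $C_\rho\setminus\{0\}$ bijectively from $(0,\pi)$, with strictly positive imaginary part $R\sin^2(\theta)$.

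Next I translate the equation $Q_\alpha(x)=\rho$ into one involving $\Phi_R$ and apply Lemma~\ref{key lemma}. Since $v_\alpha(x)>0$ on $(-c_\alpha,\infty)$ (Lemma~\ref{intro v-alpha}(iv)), any solution $x$ of $Q_\alpha(x)=\rho$ determines a unique $\theta\in(0,\pi)$ with $x+\ri v_\alpha(x)=R\sin(\theta)\e^{\ri\theta}$, and \eqref{eq3.6} gives $\Phi_R(\theta)=F(x+\ri v_\alpha(x))=1/\alpha$. Conversely, if $\theta\in(0,\pi)$ satisfies $\Phi_R(\theta)=1/\alpha$, then $z:=R\sin(\theta)\e^{\ri\theta}$ lies in $\C^+$ and satisfies $F(z)=1/\alpha$; the uniqueness established in the proof of Lemma~\ref{intro v-alpha}(ii) then forces $\re(z)\in(-c_\alpha,\infty)$ and $\im(z)=v_\alpha(\re(z))$, so that $x:=\re(z)$ is a solution of $Q_\alpha(x)=\rho$. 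Thus the two solution sets are in bijection, and Lemma~\ref{key lemma} yields at most two solutions on the $\theta$-side, proving (i).

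Part (ii) is then immediate: by Lemma~\ref{egensk P-alpha}(v) together with $P_\alpha(-c_\alpha)=s_\alpha$, the map $P_\alpha^{\brinv}$ is a bijection from $(s_\alpha,\infty)$ onto $(-c_\alpha,\infty)$, and Theorem~\ref{thm abs kont}(ii) gives $f_\alpha(\xi)=\pi^{-1}Q_\alpha(P_\alpha^{\brinv}(\xi))$ on $(s_\alpha,\infty)$. Hence the equation $f_\alpha(\xi)=\rho$ there has the same number of solutions as $Q_\alpha(x)=\pi\rho$ on $(-c_\alpha,\infty)$, which by (i) is at most two. The only substantial point of the argument is the geometric identification of $\{Q_\alpha=\rho\}$ with the circle $C_\rho$ and the matching of its parametrisation with the one appearing in Lemma~\ref{key lemma}; once that identification is in place, the monotonicity result of Lemma~\ref{key lemma} does all the work.
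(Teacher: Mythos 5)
Your proposal is correct and follows essentially the same route as the paper: both identify the level set $\{Q_\alpha=\rho\}$ with the circle of centre $\ri/(2\rho)$ and radius $1/(2\rho)$, parametrise it as $\theta\mapsto\tfrac{1}{\rho}\sin(\theta)\e^{\ri\theta}$, and invoke Lemma~\ref{key lemma} to bound the number of solutions of $F=1/\alpha$ on the circle, with (ii) reduced to (i) via $f_\alpha=\pi^{-1}Q_\alpha\circ P_\alpha^{\brinv}$. The only cosmetic difference is that you set up a full bijection between the two solution sets (the converse direction using the uniqueness in Lemma~\ref{intro v-alpha}(ii)), whereas the paper argues by contradiction and only needs the injective direction.
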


\begin{proof} \ (i) \ Let $\rho$ be a strictly positive number, and
assume that there exist three distinct points $x_1,x_2,x_3$ in
$(-c_{\alpha},\infty)$ such that 
\[
\rho=Q_\alpha(x_j)=-\im\Big(\frac{1}{x_j+\ri v_\alpha(x_j)}\Big),
\quad(j=1,2,3).
\]
It is elementary to check that the points $z$ in
$\C\setminus\{0\}$, for which $-\im(1/z)=\rho$, constitute the circle
$C_\rho$ in $\C$ with center $\frac{1}{2\rho}\ri$ and radius
$\frac{1}{2\rho}$ (except for the origin). Thus our assumption is
that $C_\rho$ intersects the set $\CG_\alpha'$ (given in \eqref{eq3.4a}) at
three distinct points. Note that
\[
C_\rho=\big\{\tfrac{1}{2\rho}(\ri+\e^{\ri\beta})\bigm|\beta\in(-\pi,\pi]\big\}
=\big\{\tfrac{1}{2\rho}(\cos(\beta)+\ri(1+\sin(\beta))
\bigm|\beta\in(-\pi,\pi]\big\}.
\]
Writing a point $\frac{1}{2\rho}(\cos(\beta)+\ri(1+\sin(\beta))$ from
$C_\rho\setminus\{0\}$ in polar coordinates $r\e^{\ri\theta}$ ($r>0$,
$\theta\in(0,\pi)$), it follows that
\[
r\sin(\theta)=\tfrac{1}{2\rho}(1+\sin(\beta)), \qand
r^2=\tfrac{1}{4\rho^2}(\cos^2(\beta)+1+\sin^2(\beta)+2\sin(\beta))
=\tfrac{1}{2\rho^2}(1+\sin(\beta)),
\]
so that
\[
r=\frac{1+\sin(\beta)}{2\rho^2r}=\frac{r\sin(\theta)}{\rho
  r}=\frac{1}{\rho}\sin(\theta).
\]
Hence,
\[
C_\rho=\big\{\tfrac{1}{\rho}\sin(\theta)\e^{\ri\theta}
\bigm|\theta\in(0,\pi]\big\},
\]
and our assumption thus implies that there are three distinct points
$\theta_1,\theta_2,\theta_3$ in $(0,\pi)$, such that
$\frac{1}{\rho}\sin(\theta_j)\e^{\ri\theta_j}\in\CG_\alpha'$,
$j=1,2,3$. According to \eqref{eq3.4a} and \eqref{eq3.6}, this means
that the equation
\begin{equation}
F(\tfrac{1}{\rho}\sin(\theta)\e^{\ri\theta})=\frac{1}{\alpha}
\label{eq3.3}
\end{equation}
has (at least) three distinct solutions in $(0,\pi)$. However,
Lemma~\ref{key lemma} asserts that the function
\[
\Phi_{1/\rho}(\theta)=F(\tfrac{1}{\rho}\sin(\theta)\e^{\ri\theta}),
\qquad(\theta\in(0,\pi)),
\]
is strictly decreasing on $(0,\theta_{1/\rho}]$ and strictly
increasing on $[\theta_{1/\rho},\pi)$ for some $\theta_{1/\rho}$ in
  $(0,\pi)$. Hence the equation \eqref{eq3.3} has at most two
  solutions in $(0,\pi)$, and we have reached the desired
  contradiction.

(ii) \ Let $\rho$ be a strictly positive number, and assume that there
  exist three distinct $\xi_1,\xi_2,\xi_3$ in $(s_\alpha,\infty)$
    such that $f_\alpha(\xi_j)=\rho$, $j=1,2,3$. Then there exist three
    distinct points $x_1,x_2,x_3$ in $(-c_\alpha,\infty)$, such that
    $P_\alpha(x_j)=\xi_j$, $j=1,2,3$, and it follows from formula
    \eqref{eq3.15} that
\[
\rho=f_\alpha(P_{\alpha}(x_j))=\tfrac{1}{\pi}Q_\alpha(x_j), \quad(j=1,2,3).
\]
This contradicts (i), and the proof is completed. 
\end{proof}

\begin{theorem}
For each $\alpha$ in $(0,\infty)$ the density $f_\alpha$ of the free
Gamma distribution $\nu_\alpha$ is unimodal. In fact, there
exists a number $\omega_\alpha$ in $(s_\alpha,\infty)$ such
that $f_\alpha$ is strictly increasing on
$[s_\alpha,\omega_\alpha]$ and strictly decreasing on
$[\omega_\alpha,\infty)$.
\end{theorem}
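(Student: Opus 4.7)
The plan is to deduce unimodality directly from the key constraint in Lemma~\ref{ligninger}(ii) that for every $\rho>0$ the equation $f_\alpha(\xi)=\rho$ has at most two solutions in $(s_\alpha,\infty)$, combined with the boundary behavior: $f_\alpha$ extends continuously by $0$ at $s_\alpha$ (as follows from formula \eqref{eq3.15} and the fact that $v_\alpha(-c_\alpha)=0$), while $f_\alpha(\xi)\to 0$ as $\xi\to\infty$ by Proposition~\ref{asympt opfoersel af taethed}. Hence the supremum $M:=\sup_{\xi>s_\alpha}f_\alpha(\xi)$ is a positive real number attained on a closed, bounded subset $A\subseteq(s_\alpha,\infty)$.

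First I would show $|A|=1$. By Theorem~\ref{thm abs kont}(iv) $f_\alpha$ is analytic and non-constant on $(s_\alpha,\infty)$, so $A$ is discrete. If $|A|\ge 2$, then near each local maximum $\omega\in A$ one has $f_\alpha(\xi)=M-c(\xi-\omega)^{2k}+o((\xi-\omega)^{2k})$ for some $k\ge1$ and $c>0$, so for every $\rho$ slightly below $M$ there are at least two solutions near $\omega$. Two local maxima then produce at least four solutions, contradicting Lemma~\ref{ligninger}(ii). Denote the unique maximizer by $\omega_\alpha$.

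Next I would verify strict monotonicity on $[s_\alpha,\omega_\alpha]$. Suppose for contradiction there exist $s_\alpha<\xi_1<\xi_2\le\omega_\alpha$ with $f_\alpha(\xi_1)\ge f_\alpha(\xi_2)$. The case $\xi_2=\omega_\alpha$ forces $f_\alpha(\xi_1)\ge M$, contradicting uniqueness of $\omega_\alpha$, so $\xi_2<\omega_\alpha$. If $f_\alpha(\xi_1)>f_\alpha(\xi_2)$, pick $\rho\in(f_\alpha(\xi_2),f_\alpha(\xi_1))$; the intermediate value theorem yields solutions in $(s_\alpha,\xi_1)$, $(\xi_1,\xi_2)$ and $(\xi_2,\omega_\alpha)$ (using that $f_\alpha$ takes the values $0$ at $s_\alpha$ and $M>\rho$ at $\omega_\alpha$), giving three solutions and contradicting Lemma~\ref{ligninger}(ii). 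If instead $f_\alpha(\xi_1)=f_\alpha(\xi_2)=:\rho_0<M$, then $\xi_1$ and $\xi_2$ are two solutions of $f_\alpha(\xi)=\rho_0$, and since $f_\alpha$ descends from $M$ to $0$ on $[\omega_\alpha,\infty)$ the intermediate value theorem provides a third solution in $(\omega_\alpha,\infty)$, again a contradiction.

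Finally, the strictly decreasing assertion on $[\omega_\alpha,\infty)$ is proved by a completely symmetric argument, replacing the role of $s_\alpha$ by $\infty$ and using the decay of $f_\alpha$ at infinity. I expect no serious obstacle beyond bookkeeping of the edge cases $\xi_2=\omega_\alpha$, $\xi_1=\omega_\alpha$ and equality $f_\alpha(\xi_1)=f_\alpha(\xi_2)$; all of them either reduce to the generic case or contradict the uniqueness of $\omega_\alpha$, and the at-most-two-preimages constraint does the rest.
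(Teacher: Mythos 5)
Your proposal is correct and takes essentially the same route as the paper: both arguments deduce unimodality from Lemma~\ref{ligninger}(ii) together with continuity of $f_\alpha$, its strict positivity on $(s_\alpha,\infty)$ and its vanishing at $s_\alpha$ and at infinity, using the intermediate value theorem to produce three preimages of a suitably chosen level $\rho$ and thereby a contradiction. The only (harmless) difference is the treatment of strictness: you first establish uniqueness of the maximizer via analyticity and handle the case $f_\alpha(\xi_1)=f_\alpha(\xi_2)$ by finding a third preimage on the other side of $\omega_\alpha$, whereas the paper shows $f_\alpha$ is non-decreasing and then excludes constancy on a subinterval by the same two-solution bound, so analyticity is not actually needed.
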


\begin{proof} The proof is an elementary consequence of
  Lemma~\ref{ligninger}(ii), but for completeness we provide the
  details.
We know that that $f_\alpha$ is continuous, that $f_\alpha(\xi)>0$
whenever $\xi>s_\alpha$, and that
\[
f_\alpha(s_\alpha)=0=\lim_{\xi\to\infty}f_\alpha(\xi)
\]
(cf.\ Lemma~\ref{intro v-alpha}(iv), Theorem~\ref{thm abs kont} and
Proposition~\ref{asympt opfoersel af taethed}).
In particular it follows that $f_\alpha$ attains a strictly positive
maximum at some point $\omega_\alpha$ in
$(s_\alpha,\infty)$. We show next that $f_\alpha$ is
non-decreasing on $[s_\alpha,\omega_\alpha]$. Indeed,
if this was not the case, we could choose $\xi_1,\xi_2$
in $(s_\alpha,\omega_\alpha)$ such that
\[
\xi_1<\xi_2, \qand f_\alpha(\xi_1)>f_\alpha(\xi_2).
\]
Choosing an arbitrary number $\rho$ in $(f_\alpha(\xi_2),f_\alpha(\xi_1))$,
it follows then by continuity of $f_\alpha$ that there must exist
$s_1$ in $(s_\alpha,\xi_1)$, $s_2$ in $(\xi_1,\xi_2)$ and $s_3$
in $(\xi_2,\omega_\alpha)$ such that 
\[
f_\alpha(s_i)=\rho, \quad (i=1,2,3).
\]
Since this contradicts Lemma~\ref{ligninger}(ii), we conclude that
$f_\alpha$ is non-decreasing on
$[s_\alpha,\omega_\alpha]$. This further implies that
$f_\alpha$ is strictly increasing on that same interval, since
otherwise $f_\alpha$ would be constant on a non-degenerate
sub-interval, which is precluded by Lemma~\ref{ligninger}(ii).

Similar (symmetric) arguments show that $f_\alpha$ is strictly
decreasing on $[\omega_\alpha,\infty)$, and this completes the proof.
\end{proof}

\section{Asymptotic behavior as $\alpha\to0$}\label{sec_alpha_mod_0}

In this section we study the asymptotic behavior of the free Gamma
distributions $\nu_\alpha$, as $\alpha\downarrow0$. We start by
considering convergence in moments.

\begin{proposition} The measures $\frac{1}{\alpha}\nu_{\alpha}$
  converge in moments to the measure
  $t^{-1}\e^{-t}1_{(0,\infty)}(t)\6t$ as $\alpha\downarrow0$. 
More precisely we have for any $p$ in $\N$ that
\[
\frac{1}{\alpha}\int_0^{\infty}t^p\,\nu_\alpha(\d t)\longrightarrow
\int_0^{\infty}t^{p-1}\e^{-t}\6t, \quad\text{as $\alpha\downarrow0$}.
\]
\end{proposition}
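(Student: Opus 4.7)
The plan is to work with the free cumulants of $\nu_\alpha$, which the formula $\CC_{\nu_\alpha}(1/z)=\alpha G_{\mu_1}(z)$ from Proposition~\ref{key formel}(i) makes completely explicit. Substituting $w=1/z$ gives
\[
\CC_{\nu_\alpha}(w)=\alpha G_{\mu_1}(1/w)=\alpha\int_0^\infty\frac{w\e^{-t}}{1-wt}\6t
\]
for $w$ in a suitable neighborhood of $0$. Expanding $1/(1-wt)=\sum_{k\ge0}(wt)^k$ and integrating term-by-term (justified for $|w|$ small since $\int_0^\infty t^k\e^{-t}\6t=k!$ is finite) yields the Taylor expansion
\[
\CC_{\nu_\alpha}(w)=\alpha\sum_{n\ge1}(n-1)!\,w^n,
\]
so the free cumulants of $\nu_\alpha$ are $\kappa_n(\nu_\alpha)=\alpha(n-1)!$ for every $n\ge1$.

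Since $\nu_\alpha$ has moments of all orders (its support is bounded below and its density decays like $\xi^{-1}\e^{-\xi}$ at infinity, cf.\ \eqref{eqI.3} and Proposition~\ref{asympt opfoersel af taethed}), the free moment-cumulant relation gives, for every $p\ge1$,
\[
m_p(\nu_\alpha):=\int_0^\infty t^p\,\nu_\alpha(\d t)
=\sum_{\pi\in \mathrm{NC}(p)}\prod_{B\in\pi}\kappa_{|B|}(\nu_\alpha)
=\sum_{\pi\in \mathrm{NC}(p)}\alpha^{|\pi|}\prod_{B\in\pi}(|B|-1)!,
\]
where $\mathrm{NC}(p)$ is the lattice of non-crossing partitions of $\{1,\dots,p\}$. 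Dividing by $\alpha$,
\[
\frac{m_p(\nu_\alpha)}{\alpha}=\sum_{\pi\in \mathrm{NC}(p)}\alpha^{|\pi|-1}\prod_{B\in\pi}(|B|-1)!.
\]
As $\alpha\downarrow0$, every term with $|\pi|\ge2$ vanishes (the sum is over the finite set $\mathrm{NC}(p)$, so uniform convergence is automatic), whereas the unique partition with $|\pi|=1$ contributes $(p-1)!=\int_0^\infty t^{p-1}\e^{-t}\6t$. This yields the stated limit.

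The only non-routine step is the appeal to the free moment-cumulant relation, since in this setting $\nu_\alpha$ has unbounded support. It is justified as follows: the explicit asymptotic expansion $G_{\nu_\alpha}(z)=\sum_{p\ge0}m_p(\nu_\alpha)z^{-p-1}$ as $z\to\infty$ in $\C^-$ is valid because all moments are finite; combining this with Proposition~\ref{key formel}(ii), $G_{\nu_\alpha}(H_\alpha(z))=1/z$, and the analogous expansion of $H_\alpha(z)$ obtained from \eqref{eq3.12} by the same geometric series trick, one recovers $m_p(\nu_\alpha)$ from the coefficients of $\CC_{\nu_\alpha}$ via the standard combinatorial inversion producing precisely the displayed non-crossing partition sum. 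Alternatively, one may simply cite the general result that formal inversion of the relation $\CC_{\nu_\alpha}(1/z)=zG_{\nu_\alpha}^{\brinv}(1/z)-1$ encodes exactly the free moment-cumulant formula.
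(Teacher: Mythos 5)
Your overall strategy---identify the free cumulants of $\nu_\alpha$ as $\alpha(p-1)!$, feed them into the free moment--cumulant formula, and observe that after dividing by $\alpha$ only the one-block (linear-in-$\alpha$) contribution survives as $\alpha\downarrow0$---is essentially the paper's proof. The only structural difference is how the cumulants are identified: the paper cites \cite{An} for the fact that the free cumulants of $\nu_\alpha=\Lambda(\mu_\alpha)$ coincide with the classical cumulants of $\mu_\alpha$ and computes the latter from $\log\hat{\mu}_\alpha$, whereas you read them off the identity $\CC_{\nu_\alpha}(1/z)=\alpha G_{\mu_1}(z)$ of Proposition~\ref{key formel}(i). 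Both routes are legitimate and lead to the same polynomial structure of $m_p(\alpha)$ in $\alpha$ with vanishing constant term and linear term $\alpha(p-1)!$.

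There is, however, a genuine flaw in the step where you extract the cumulants: the series $\alpha\sum_{n\ge1}(n-1)!\,w^n$ has radius of convergence $0$, so the claimed ``Taylor expansion'' of $\CC_{\nu_\alpha}(w)$ cannot hold as a convergent series, and the term-by-term integration of $\sum_{k\ge0}(wt)^k$ against $\e^{-t}\,\d t$ is not justified for any $w\ne0$ (it produces $\sum_{k\ge0}k!\,w^{k+1}$, which diverges everywhere). What is true, and what you actually need, is an asymptotic expansion: writing $\frac{1}{1-wt}=\sum_{k=0}^{N-1}(wt)^k+\frac{(wt)^N}{1-wt}$ and estimating the remainder as $w\to0$ non-tangentially gives $\CC_{\nu_\alpha}(w)=\alpha\sum_{n=1}^{N}(n-1)!\,w^n+o(|w|^N)$ for each fixed $N$. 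One must then invoke the fact that, for a measure with moments of all orders, the coefficients of this asymptotic expansion are precisely the free cumulants and that the moment--cumulant formula over non-crossing partitions is valid---this is exactly the content of the results the paper cites (\cite{BG}, \cite{An}, \cite{NiSp}), and it is what your closing paragraph gestures at, but it should carry the argument rather than appear as an afterthought, since the convergent-series route you lead with is unavailable. With that repair, your limit computation (the sum over $\mathrm{NC}(p)$ is finite, so only the partition with one block survives after dividing by $\alpha$) is correct and coincides with the paper's conclusion.
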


\begin{proof}
It follows from Proposition~\ref{asympt opfoersel af taethed} that
$\nu_\alpha$ has moments of all orders (cf.\ also \cite{BG}). It
follows moreover from \cite[Lemma~6.5]{An} that for all $p$ in $\N$
the free cumulant $r_p(\alpha)$ of $\nu_\alpha$ equals the classical
cumulant $c_p(\alpha)$ of $\mu_\alpha$ (the classical Gamma
distribution with parameter $\alpha$). The latter cumulants may be
identified by considering the Taylor expansion at 0 of
$\log(\hat{\mu}_{\alpha}(u))$. Using dominated convergence, it follows
that for any $u$ in $(-1,1)$ we have that (cf.\ \eqref{eq3.19a})
\begin{equation*}
\log(\hat{\mu}_\alpha(u))=\alpha\int_0^{\infty}
\big(\e^{\ri ut}-1\big)\frac{\e^{-t}}{t}\6t
=\alpha\int_0^{\infty}\Big(\sum_{p=1}^{\infty}\frac{\ri^pu^pt^{p-1}}{p!}\Big)
\e^{-t}\6t
=\alpha\sum_{p=1}^{\infty}\frac{\ri^p(p-1)!}{p!}u^p,
\end{equation*}
from which we may deduce that
\[
r_p(\alpha)=c_p(\alpha)=\alpha(p-1)!\quad\text{for all $p$ in $\N$.}
\]
Using the Moment-Cumulant Formula (cf.\ \cite{NiSp}) it follows further
that the $p$'th moment $m_p(\alpha)$ of $\nu_\alpha$ is given by
\[
m_p(\alpha)=r_p(\alpha)+
\sum_{k=2}^p\frac{1}{k}\binom{p}{k-1}
\sum_{q_1,\ldots,q_k\ge1\atop q_1+\cdots+q_k=p}
r_{q_1}(\alpha)r_{q_2}(\alpha)\cdots r_{q_k}(\alpha)
\]
for any $p$ in $\N$. In particular we see that $m_p(\alpha)$ is a
polynomial in $\alpha$ of degree $p$ with no constant term and linear
term $\alpha(p-1)!$. For any $p$ in $\N$ we may thus conclude that
\[
\tfrac{1}{\alpha}\int_0^{\infty}t^p\,\nu_\alpha(\d t)
=\tfrac{1}{\alpha}m_p(\alpha)\underset{\alpha\to0}{\longrightarrow}
(p-1)!=\int_0^{\infty}t^{p-1}\e^{-t}\6t,
\]
as desired.
\end{proof}

We show next that the densities of $\frac{1}{\alpha}\nu_{\alpha}$
actually converge point-wise to $t^{-1}\e^{-t}1_{(0,\infty)}(t)$ as
$\alpha\downarrow0$.

\begin{lemma}\label{lemmaB} Consider the functions $P_\alpha$ defined
  in \eqref{eq3.13}.

\begin{enumerate}[i]

\item For any $x$ in $(0,\infty)$ we have that $P_\alpha(x)\to
  x$, as $\alpha\downarrow0$.

\item For any $y$ in $(0,\infty)$ we have that
  $P_\alpha^\brinv(y)\to y$, as $\alpha\downarrow0$.

\end{enumerate}
\end{lemma}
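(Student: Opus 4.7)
My plan is to establish (i) by directly analyzing the formula for $P_\alpha$ given in Lemma~\ref{egensk P-alpha}(ii), and then deduce (ii) from (i) by strict monotonicity of $P_\alpha$.

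For (i), fix $x\in(0,\infty)$. Since $c_\alpha\downarrow 0$ as $\alpha\downarrow 0$ by Lemma~\ref{intro v-alpha}(i), we have $x>-c_\alpha$ for all sufficiently small $\alpha$, so Lemma~\ref{egensk P-alpha}(ii) gives
\[
P_\alpha(x)-x=x+\alpha-\alpha\int_0^\infty\frac{t^2\e^{-t}}{(x-t)^2+v_\alpha(x)^2}\6t.
\]
It therefore suffices to show that the integral in question tends to $x$ as $\alpha\downarrow 0$. Setting $v=v_\alpha(x)$ and extending $g(t)=t^2\e^{-t}\unit_{[0,\infty)}(t)$ by zero to all of $\R$, the integral equals $(\alpha/v)\int_\R vg(t)/[(x-t)^2+v^2]\6t$. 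The extension $g$ is continuous on $\R$ (the value $g(0)=0$ takes care of the boundary point) and bounded, and $v=v_\alpha(x)\to 0$ by Lemma~\ref{egensk v-alpha}(iii). Standard Poisson-kernel theory then yields
\[
\int_\R\frac{vg(t)}{(x-t)^2+v^2}\6t\longrightarrow\pi g(x)=\pi x^2\e^{-x},
\]
while Lemma~\ref{egensk v-alpha}(iii) simultaneously gives $\alpha/v_\alpha(x)\to 1/(\pi x\e^{-x})$. Multiplying these two limits produces $x$, so $P_\alpha(x)\to 2x+0-x=x$, as required.

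For (ii), fix $y\in(0,\infty)$ and $\epsilon\in(0,y)$. Applying (i) at the positive points $y\pm\epsilon$ yields $P_\alpha(y-\epsilon)\to y-\epsilon$ and $P_\alpha(y+\epsilon)\to y+\epsilon$, so for $\alpha$ small enough we have $P_\alpha(y-\epsilon)<y<P_\alpha(y+\epsilon)$. The strict monotonicity of $P_\alpha$ from Lemma~\ref{egensk P-alpha}(v) then forces $y-\epsilon<P_\alpha^{\brinv}(y)<y+\epsilon$, and letting $\epsilon\downarrow 0$ gives $P_\alpha^{\brinv}(y)\to y$.

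The only real obstacle is the Poisson-kernel limit, which is classical but needs a moment of care because $g$ is supported on the half-line; the equality $g(0)=0$ is precisely what makes the extension continuous at $0$, so the limit is attained at the continuity point $x>0$. If one wishes to sidestep general Poisson-kernel theory, one can instead split the integral at $t=x/2$: on $[0,x/2]$ the integrand is dominated by $4t^2\e^{-t}/x^2$, so the prefactor $\alpha$ sends that piece to $0$, while on $[x/2,\infty)$ the approximate-identity property of $v/((x-t)^2+v^2)$ together with $v_\alpha(x)/\alpha\to\pi x\e^{-x}$ from Lemma~\ref{egensk v-alpha}(iii) delivers the same limit $x$.
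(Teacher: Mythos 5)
Your proposal is correct; part (ii) is exactly the paper's argument, but your part (i) takes a genuinely different route. The paper starts from the complex representation $P_\alpha(x)=H_\alpha(x+\ri v_\alpha(x))=x+\ri v_\alpha(x)+\alpha+\alpha\int_0^\infty\frac{t\e^{-t}}{x-t+\ri v_\alpha(x)}\,\6t$, so it only has to show that the last term tends to $0$: the one-sided bound $v_\alpha(x)/\alpha\ge\tfrac{\pi}{2}x\e^{-x}$ (from Lemma~\ref{egensk v-alpha}(iii)) gives the $\alpha$-independent dominating function $\tfrac{2}{\pi}x^{-1}\e^{x}t\e^{-t}$, and since the integrand tends to $0$ pointwise for $t\ne x$, dominated convergence finishes. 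You instead start from the real formula of Lemma~\ref{egensk P-alpha}(ii), which forces you to evaluate the genuinely nontrivial limit $\alpha\int_0^\infty\frac{t^2\e^{-t}}{(x-t)^2+v_\alpha(x)^2}\,\6t\to x$; this needs the full two-sided asymptotics $v_\alpha(x)/\alpha\to\pi x\e^{-x}$ together with the Poisson-kernel (approximate identity) limit, both of which you justify correctly, including the continuity at $0$ of the zero-extension of $t^2\e^{-t}1_{[0,\infty)}(t)$ and the alternative splitting at $t=x/2$. The trade-off: the paper's route is more economical (a vanishing limit plus plain domination, no rate beyond a lower bound), while yours is a natural use of the explicit density-side formula, mirrors the approximate-identity estimates carried out in the appendix (cf.\ Lemma~\ref{asympt lemma for P-alpha} and the proof of Lemma~\ref{egensk P-alpha}(iv)), and makes visible where the cancellation $2x-x=x$ comes from.
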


\begin{proof} (i) \ Let $x$ be a fixed number in $(0,\infty)$. From
  \eqref{eq3.12} and \eqref{eq3.13} it follows that
\[
P_\alpha(x)=x+\ri
v_\alpha(x)+\alpha+\alpha\int_0^{\infty}
\frac{t\e^{-t}}{x-t+\ri v_\alpha(x)}\6t, \qquad(\alpha\in(0,\infty)).
\]
Lemma~\ref{egensk v-alpha}(iii) clearly implies that $v_\alpha(x)\to0$, as
$\alpha\to0$, and hence it suffices to show that
\begin{equation}
\alpha\int_0^{\infty}\frac{t\e^{-t}}{x-t+\ri v_\alpha(x)}\6t
\longrightarrow0, \quad\text{as $\alpha\to0$.}
\label{addeq7}
\end{equation}
From Lemma~\ref{egensk v-alpha}(iii) it follows furthermore that we may choose
$\alpha_1$ in $(0,\infty)$, such that
$\frac{v_\alpha(x)}{\alpha}\ge\frac{\pi}{2}x\e^{-x}$, whenever
$\alpha\in(0,\alpha_1]$. Then for all $t$ in $(0,\infty)$ and $\alpha$
in $(0,\alpha_1]$ we have that
\begin{equation}
\alpha\Big|\frac{t\e^{-t}}{x-t+\ri v_\alpha(x)}\Big|
\le\frac{t\e^{-t}}{v_\alpha(x)/\alpha}
\le\frac{t\e^{-t}}{\frac{\pi}{2}x\e^{-x}}
=\tfrac{2}{\pi}x^{-1}\e^xt\e^{-t}.
\label{addeq6}
\end{equation}
For any $t$ in $(0,\infty)\setminus\{x\}$ we note further that
\begin{equation}
\alpha\Big|\frac{t\e^{-t}}{x-t+\ri v_\alpha(x)}\Big|
\le\alpha\frac{t\e^{-t}}{|x-t|}
\longrightarrow0, \quad\text{as $\alpha\to0$.}
\label{addeq8}
\end{equation}
Combining \eqref{addeq6} and \eqref{addeq8} it follows by
dominated convergence that \eqref{addeq7} holds, as
desired.

(ii) \ Let $y$ in $(0,\infty)$ and $\epsilon$ in $(0,y)$ be
given. From (i) we know that $P_\alpha(y-\epsilon)\to y-\epsilon$,
and $P_\alpha(y+\epsilon)\to y+\epsilon$, as $\alpha\to0$. Hence we
may choose $\alpha_2$ in $(0,\infty)$ such that
\begin{equation*}
P_\alpha(y-\epsilon)<y, \qand
P_\alpha(y+\epsilon)>y, \quad\text{whenever
  $\alpha\in(0,\alpha_2]$}.
\end{equation*}
Then for any $\alpha$ in $(0,\alpha_2]$ we have that
\begin{equation}
y\in[P_\alpha(y-\epsilon),P_\alpha(y+\epsilon)]
=P_\alpha\big([y-\epsilon,y+\epsilon]\big),
\label{addeq9}
\end{equation}
since $P_\alpha$ is increasing and continuous. It follows from
\eqref{addeq9} that
\[
P_\alpha^\brinv(y)\in[y-\epsilon,y+\epsilon], \quad\text{whenever
  $\alpha\in(0,\alpha_2]$},
\]
and since $\epsilon$ was chosen arbitrarily in $(0,y)$, this
establishes (ii).
\end{proof}

\begin{proposition}\label{konv af taethed for alpha->0}
For any $x$ in $(0,\infty)$ we have that
\[
\tfrac{1}{\alpha}f_{\alpha}(x)\to x^{-1}\e^{-x}, \quad\text{as
  $\alpha\to0$.}
\]
\end{proposition}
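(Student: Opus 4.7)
The plan is to combine the explicit density formula from Theorem~\ref{thm abs kont}(ii) with the two convergence results already at hand: the pointwise convergence $P_\alpha^{\brinv}(x)\to x$ from Lemma~\ref{lemmaB}(ii), and the uniform asymptotic $v_\alpha(y)/\alpha\to\pi y\e^{-y}$ on half-lines $[\delta,\infty)$ from Lemma~\ref{egensk v-alpha}(iii). Fix $x\in(0,\infty)$. Since $s_\alpha\to0$ as $\alpha\downarrow0$ (Proposition~\ref{egensk s-alpha}(iv)), we have $x>s_\alpha$ for all sufficiently small $\alpha$, and hence, writing $y_\alpha:=P_\alpha^{\brinv}(x)$,
\[
\frac{1}{\alpha}f_\alpha(x)
=\frac{1}{\pi\alpha}\,Q_\alpha(y_\alpha)
=\frac{1}{\pi}\cdot\frac{v_\alpha(y_\alpha)/\alpha}{y_\alpha^2+v_\alpha(y_\alpha)^2}.
\]

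First I would analyze the numerator. By Lemma~\ref{lemmaB}(ii), $y_\alpha\to x>0$, so for all small enough $\alpha$ we have $y_\alpha\in[x/2,\infty)$. Applying Lemma~\ref{egensk v-alpha}(iii) with $\delta=x/2$ and any $\gamma>0$ gives
\[
\Big|\frac{v_\alpha(y_\alpha)}{\alpha}-\pi y_\alpha\e^{-y_\alpha}\Big|\le\gamma
\qquad\text{for $\alpha$ small,}
\]
and combining this with the continuity of $y\mapsto\pi y\e^{-y}$ at $x$ yields
$v_\alpha(y_\alpha)/\alpha\to\pi x\e^{-x}$.

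Next I would handle the denominator. The estimate above shows in particular that $v_\alpha(y_\alpha)=O(\alpha)$, so $v_\alpha(y_\alpha)\to0$, and therefore $y_\alpha^2+v_\alpha(y_\alpha)^2\to x^2$. Putting the two pieces together,
\[
\frac{1}{\alpha}f_\alpha(x)
=\frac{1}{\pi}\cdot\frac{v_\alpha(y_\alpha)/\alpha}{y_\alpha^2+v_\alpha(y_\alpha)^2}
\longrightarrow\frac{1}{\pi}\cdot\frac{\pi x\e^{-x}}{x^2}
=x^{-1}\e^{-x},
\]
which is the desired limit.

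There is no real obstacle here since the heavy lifting is already done in Lemmas~\ref{egensk v-alpha} and~\ref{lemmaB}; the only point requiring a touch of care is that we evaluate $v_\alpha/\alpha$ at the moving point $y_\alpha$ rather than at a fixed point. This is precisely why the uniform version of the asymptotic (Lemma~\ref{egensk v-alpha}(iii)) is needed, and not merely the pointwise statement (Lemma~\ref{egensk v-alpha}(ii)); the argument above uses it exactly once, on the half-line $[x/2,\infty)$.
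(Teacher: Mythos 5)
Your proof is correct, and it rests on exactly the same two pillars as the paper's: the uniform asymptotic $v_\alpha/\alpha\approx\pi x\e^{-x}$ on half-lines (Lemma~\ref{egensk v-alpha}(iii)) and the convergence $P_\alpha^{\brinv}(x)\to x$ (Lemma~\ref{lemmaB}(ii)). The difference is organizational: the paper first proves $\tfrac{1}{\alpha}f_\alpha(P_\alpha(x))\to x^{-1}\e^{-x}$ at the fixed point $x$ and then separately estimates $\tfrac{1}{\alpha}\bigl|f_\alpha(P_\alpha(x))-f_\alpha(x)\bigr|$ via a triangle-inequality decomposition involving both $v_\alpha(x)$ and $v_\alpha(y_\alpha)$, whereas you substitute the moving point $y_\alpha=P_\alpha^{\brinv}(x)$ directly into the density formula and pass to the limit in one step. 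Your route is shorter and avoids the two-term bookkeeping, precisely because you invoke the uniform estimate together with the continuity of $y\mapsto\pi y\e^{-y}$ at $x$, which is the same mechanism the paper uses inside its difference estimate; you also correctly note the minor point the paper handles implicitly, namely that $s_\alpha\to0$ guarantees $x>s_\alpha$ (hence $y_\alpha>-c_\alpha$ and $y_\alpha\in[x/2,\infty)$) for small $\alpha$, so the uniform lemma is applicable at the moving point. Nothing is missing.
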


\begin{proof} Let $x$ be a fixed number in $(0,\infty)$, and note that  
Lemma~\ref{egensk v-alpha}(iii) implies that
$\frac{v_\alpha(x)}{\alpha}\to\pi x\e^{-x}$, as $\alpha\to0$. 
Using then formula \eqref{eq3.15} we find that
\begin{equation*}
\tfrac{1}{\alpha}f_{\alpha}(P_\alpha(x))
=\frac{v_\alpha(x)/\alpha}{\pi(x^2+v_\alpha(x)^2)}
\longrightarrow\frac{x\e^{-x}}{x^2+0}=x^{-1}\e^{-x},
\quad\text{as $\alpha\to0$}.
\end{equation*}
It suffices thus to show that
\[
\frac{1}{\alpha}\big|f_{\alpha}(P_{\alpha}(x))-f_{\alpha}(x)\big|
\longrightarrow0, 
\quad\text{as $\alpha\to0$.}
\]
For all positive $\alpha$ we put $y_\alpha:=P_\alpha^\brinv(x)$, and
Lemma~\ref{lemmaB}(ii) then asserts that $y_\alpha\to x$, as
$\alpha\to0$. Given any number $\delta$ in $(0,x)$ we may
thus choose $\alpha_1$ in $(0,\infty)$ such that
\begin{equation}
y_\alpha\in[\delta,\infty), \quad\text{whenever $\alpha\in(0,\alpha_1]$}.
\label{addeq10}
\end{equation}
For any $\alpha$ in $(0,\alpha_1]$ we find then by application of
\eqref{eq3.15} that
\begin{equation}
\begin{split}
\frac{1}{\alpha}\big|f_{\alpha}(P_\alpha(x))&-f_{\alpha}(x)\big|
=\frac{1}{\alpha}
\big|f_{\alpha}(P_\alpha(x))-f_{\alpha}(P_\alpha(y_\alpha))\big|
\\[.2cm]
&=\frac{1}{\pi}\Big|\frac{v_\alpha(x)/\alpha}{x^2+v_\alpha(x)^2}
-\frac{v_\alpha(y_\alpha)/\alpha}{y_\alpha^2+v_\alpha(y_\alpha)^2}\Big|
\\[.2cm]
&\le
\frac{1}{\pi}
\Big|\frac{v_\alpha(x)/\alpha-v_\alpha(y_\alpha)/\alpha}{x^2+v_\alpha(x)^2}\Big|
+\frac{v_\alpha(y_\alpha)}{\pi\alpha}
\Big|\frac{1}{x^2+v_\alpha(x)^2}-
\frac{1}{y_\alpha^2+v_\alpha(y_\alpha)^2}\Big|.
\label{addeq13}
\end{split}
\end{equation}
Consider now in addition an arbitrary number $\gamma$ in $(0,1)$.
By Lemma~\ref{egensk v-alpha}(iii) we may then choose $\alpha_2$ in
$(0,\alpha_1]$, such that
\begin{equation}
\sup_{u\in[\delta,\infty)}
\Big|\frac{v_\alpha(u)}{\alpha}-\pi u\e^{-u}\Big|\le\gamma, 
\quad\text{whenever $\alpha\in(0,\alpha_2]$}.
\label{addeq11}
\end{equation}
Using \eqref{addeq10} and \eqref{addeq11} we find that
\begin{equation}
\frac{v_\alpha(y_\alpha)}{\alpha}\le \pi y_\alpha\e^{-y_\alpha}+\gamma
\le \pi\sup_{u\in(0,\infty)}u\e^{-u}+1<\infty,
\quad\text{whenever $\alpha\in(0,\alpha_2]$}.
\label{addeq12}
\end{equation}
Together with the fact that $y_\alpha\to x$ as $\alpha\to0$, this
implies that
\[
\frac{1}{y_\alpha^2+v_\alpha(y_\alpha)^2}
=\frac{1}{y_\alpha^2+\alpha^2(\frac{v_\alpha(y_\alpha)}{\alpha})^2}
\longrightarrow\frac{1}{x^2}, \quad\text{as $\alpha\to0$}.
\]
Since also $\frac{1}{x^2+v_\alpha(x)^2}\to\frac{1}{x^2}$, as
$\alpha\to0$, another application of \eqref{addeq12} then yields that
\begin{equation*}
\frac{v_\alpha(y_\alpha)}{\alpha}
\Big|\frac{1}{x^2+v_\alpha(x)^2}-
\frac{1}{y_\alpha^2+v_\alpha(y_\alpha)^2}\Big|\longrightarrow0,
\quad\text{as $\alpha\to0$.}
\end{equation*}
In view of \eqref{addeq13} it remains thus to show that
\begin{equation}
\Big|\frac{v_\alpha(x)/\alpha-v_\alpha(y_\alpha)/\alpha}
{x^2+v_\alpha(x)^2}\Big|
\longrightarrow0, \quad\text{as $\alpha\to0$}.
\label{addeq14}
\end{equation}
For this, note that for any $\alpha$ in $(0,\alpha_2]$ we have by
new applications of \eqref{addeq10} and \eqref{addeq11} that
\[
\Big|\frac{v_\alpha(x)/\alpha-v_\alpha(y_\alpha)/\alpha}
{x^2+v_\alpha(x)^2}\Big|
\le\frac{2\gamma+\pi|x\e^{-x}-y_\alpha\e^{-y_\alpha}|}{x^2}.
\]
Since $u\mapsto ue^{-u}$ is continuous at $x$, we may choose
$\alpha_3$ in $(0,\alpha_2]$ such that
$|x\e^{-x}-y_\alpha\e^{-y_\alpha}|\le\pi^{-1}\gamma$, whenever
$\alpha\in(0,\alpha_3]$, and then
\[
\Big|\frac{v_\alpha(x)/\alpha-v_\alpha(y_\alpha)/\alpha}
{x^2+v_\alpha(x)^2}\Big|
\le\frac{3\gamma}{x^2}, \quad\text{whenever $\alpha\in(0,\alpha_3]$.}
\]
Since $\gamma$ was chosen arbitrarily in $(0,1)$, this verifies
\eqref{addeq14} and completes the proof.
\end{proof}

\appendix
\section{Proofs of Lemmas~\ref{egensk v-alpha} and \ref{egensk
    P-alpha}}\label{tekniske beviser}

In this appendix we provide detailed (but rather technical) proofs of
Lemma~\ref{egensk v-alpha} and Lemma~\ref{egensk P-alpha}. We
start with the following preparatory result: 

\begin{lemma}\label{egensk v-alpha II}
Let $\alpha$ be a positive number and consider the
  function $v_\alpha\colon\R\to[0,\infty)$ given by
      \eqref{eq3.6a}-\eqref{eq3.6}. We then have

\begin{enumerate}[i]

\item If $0<\epsilon<x$, then
\[
v_\alpha(x)\ge
  2\alpha(x-\epsilon)\e^{-x-\epsilon}\arctan(\tfrac{\epsilon}{v_{\alpha}(x)}).
\]

\item For any $\epsilon$ in $(0,1)$ we have for all sufficiently
  large $x$ that
\[
v_\alpha(x)\le\frac{2\alpha(x+\epsilon)}{1-\epsilon}
\e^{-x+\epsilon}\arctan(\tfrac{\epsilon}{v_\alpha(x)}).
\]
\end{enumerate}
\end{lemma}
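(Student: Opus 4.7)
The plan is to work directly from the defining relation $\alpha F(x+\ri v_\alpha(x))=1$ in the form
\[
v_\alpha(x)=\alpha\int_0^\infty\frac{v_\alpha(x)\,t\e^{-t}}{(x-t)^2+v_\alpha(x)^2}\6t,
\]
split the range of integration into $A=[x-\epsilon,x+\epsilon]$ and $B=[0,\infty)\setminus A$, and exploit that on $A$ one has the exact evaluation
\[
\int_A\frac{v_\alpha(x)}{(x-t)^2+v_\alpha(x)^2}\6t
=2\arctan\!\bigl(\epsilon/v_\alpha(x)\bigr).
\]

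For (i), the condition $0<\epsilon<x$ ensures $A\subseteq(0,\infty)$. On $A$ I bound $t\e^{-t}\ge(x-\epsilon)\e^{-(x+\epsilon)}$ (using $t\ge x-\epsilon>0$ and $t\le x+\epsilon$), discard the $B$-contribution (which is non-negative), and the displayed $\arctan$ identity gives (i) immediately.

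For (ii), I first bound the $A$-contribution from above, now using $t\le x+\epsilon$ and $t\ge x-\epsilon$, to obtain
\[
\alpha\!\int_A\frac{v_\alpha(x)\,t\e^{-t}}{(x-t)^2+v_\alpha(x)^2}\6t
\;\le\;2\alpha(x+\epsilon)\e^{-(x-\epsilon)}\arctan(\epsilon/v_\alpha(x))=:U.
\]
For the $B$-contribution I factor out $v_\alpha(x)$ and use the crude bound $(x-t)^2+v_\alpha(x)^2\ge(x-t)^2$, so that
\[
\alpha\!\int_B\frac{v_\alpha(x)\,t\e^{-t}}{(x-t)^2+v_\alpha(x)^2}\6t
\;\le\;v_\alpha(x)\cdot\alpha\!\int_B\frac{t\e^{-t}}{(x-t)^2}\6t.
\]
The whole game then reduces to showing that the integral on the right, which no longer involves $v_\alpha(x)$, tends to $0$ as $x\to\infty$. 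Granting that, for $x$ large enough it is below $\epsilon/\alpha$, which gives $v_\alpha(x)\le U+\epsilon v_\alpha(x)$, i.e.\ $v_\alpha(x)(1-\epsilon)\le U$, which is exactly (ii).

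The only real technical step is therefore the vanishing $\int_B t\e^{-t}(x-t)^{-2}\6t\to0$. I would handle this by splitting $B$ into three pieces: on $(x+\epsilon,\infty)$ use $(x-t)^2\ge\epsilon^2$ and the explicit tail integral $\int_{x+\epsilon}^\infty t\e^{-t}\6t=(x+\epsilon+1)\e^{-(x+\epsilon)}$, which decays exponentially; on $[x/2,x-\epsilon]$ use $\e^{-t}\le\e^{-x/2}$ together with $\int_{x/2}^{x-\epsilon}(x-t)^{-2}\6t=\epsilon^{-1}-2x^{-1}$, producing an $O(x\e^{-x/2})$ bound; on $[0,x/2]$ use $(x-t)^2\ge x^2/4$ so that the piece is dominated by $4x^{-2}\int_0^\infty t\e^{-t}\6t=4/x^2$. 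All three terms vanish as $x\to\infty$, which is the main obstacle but is entirely routine, and this completes the proof.
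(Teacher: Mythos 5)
Your proof is correct and follows essentially the same route as the paper's: both restrict the defining identity $\alpha F(x+\ri v_\alpha(x))=1$ to the window $[x-\epsilon,x+\epsilon]$, bound $t\e^{-t}$ there by its extreme values $(x\mp\epsilon)\e^{-(x\pm\epsilon)}$, use the exact evaluation of the Poisson-kernel integral as $2\arctan(\epsilon/v_\alpha(x))$, and for (ii) show that the contribution from outside the window is at most an $\epsilon$-fraction for large $x$. The only (harmless) deviation is in how that tail is controlled: you drop $v_\alpha(x)^2$ and estimate $\int_{B}t\e^{-t}(x-t)^{-2}\,\d t\to0$ by an explicit three-piece split, whereas the paper dominates the tail integrand by $\epsilon^{-2}t\e^{-t}$ and invokes dominated convergence.
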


\begin{proof} (i) \ Recall first (cf.\ \eqref{eq3.6}) that
\begin{equation}
\frac{1}{\alpha}=\int_0^{\infty}\frac{t\e^{-t}}{(x-t)^2+v_\alpha(x)^2}\6t, 
\qquad(x\in(-c_\alpha,\infty)),
\label{eq3.22}
\end{equation}
Assume next that $0<\epsilon<x$. Using \eqref{eq3.22}
and the change of variables $u=\frac{t-x}{v_{\alpha}(x)}$, we find that
\begin{equation}
\begin{split}
\frac{1}{\alpha}&\ge
\int_{x-\epsilon}^{x+\epsilon}\frac{t\e^{-t}}{(t-x)^2+v_{\alpha}(x)^2}\6t 
\ge\frac{(x-\epsilon)\e^{-x-\epsilon}}{v_{\alpha}(x)^2}
\int_{x-\epsilon}^{x+\epsilon}\frac{1}{1+(\frac{t-x}{v_{\alpha}(x)})^2}\6t
\\[.2cm]
&=\frac{(x-\epsilon)\e^{-x-\epsilon}}{v_{\alpha}(x)^2}
\int_{-\epsilon/v_\alpha(x)}^{\epsilon/v_\alpha(x)}\frac{1}{1+u^2}v_\alpha(x)\6u
=\frac{2(x-\epsilon)\e^{-x-\epsilon}}{v_{\alpha}(x)}
\arctan(\tfrac{\epsilon}{v_\alpha(x)}),
\label{eq3.22a}
\end{split}
\end{equation}
from which the desired estimate follows immediately. 

(ii) \ Let $\epsilon$ be a given number in $(0,1)$, and note that for
any $t$ in $(0,\infty)$ and $x$ in $(\epsilon,\infty)$,
\[
\frac{t\e^{-t}}{(t-x)^2+v_\alpha(x)^2}1_{[0,x-\epsilon]}(t), \ 
\frac{t\e^{-t}}{(t-x)^2+v_\alpha(x)^2}1_{[x+\epsilon,\infty)}(t)
\le\epsilon^{-2}t\e^{-t}.
\]
Hence, by dominated convergence,
\[
\int_{0}^{x-\epsilon}\frac{t\e^{-t}}{(t-x)^2+v_{\alpha}(x)^2}\6t, \ 
\int_{x+\epsilon}^{\infty}\frac{t\e^{-t}}{(t-x)^2+v_{\alpha}(x)^2}\6t 
\longrightarrow0, \quad\text{as $x\to\infty$}.
\]
Thus, for all sufficiently large $x$ we have by \eqref{eq3.22} that
\begin{equation}
\begin{split}
(1-\epsilon)\frac{1}{\alpha}&\le
\int_{x-\epsilon}^{x+\epsilon}\frac{t\e^{-t}}{(t-x)^2+v_{\alpha}(x)^2}\6t 
\le\frac{(x+\epsilon)\e^{-x+\epsilon}}{v_{\alpha}(x)^2}
\int_{x-\epsilon}^{x+\epsilon}\frac{1}{1+(\frac{t-x}{v_{\alpha}(x)})^2}\6t
\\[.2cm]
&=\frac{2(x+\epsilon)\e^{-x+\epsilon}}{v_{\alpha}(x)}
\arctan(\tfrac{\epsilon}{v_\alpha(x)}),
\label{eq3.22b}
\end{split}
\end{equation}
which yields the desired estimate.
\end{proof}

\begin{proofof}[Proof of Lemma~\ref{egensk v-alpha}.] \ (i) \ Consider
    the function 
  $\tilde{F}\colon\R\times(0,\infty)\to\R$ given by
\[
\tilde{F}(x,y)=F(x+\ri y)
=\int_0^\infty\frac{t\e^{-t}}{(x-t)^2+y^2}\6t,
\qquad(x\in\R, \ y>0).
\]
Using formula \eqref{eq3.21} in the case $\alpha=1$, it follows that
\[
\tilde{F}(x,y)=1-y^{-1}\im\big(H_1(x+\ri y)\big),
\qquad((x,y)\in\R\times(0,\infty)),
\]
and since the imaginary part of an analytic function is
analytic (as a function of two real variables), we may conclude
from this that $\tilde{F}$ is analytic on $\R\times(0,\infty)$.
By differentiation under the integral sign we find in particular that
\[
\frac{\partial\tilde{F}}{\partial x}(x,y)
=-2\int_0^\infty\frac{(x-t)t\e^{-t}}{((x-t)^2+y^2)^2}\6t,
\]
and
\[
\frac{\partial\tilde{F}}{\partial y}(x,y)
=-2y\int_0^\infty\frac{t\e^{-t}}{((x-t)^2+y^2)^2}\6t.
\]
Since $v_\alpha(x)>0$ and
$\tilde{F}(x,v_{\alpha}(x))=\frac{1}{\alpha}$ for all $x$ in 
$(-c_\alpha,\infty)$, and since $\frac{\partial\tilde{F}}{\partial
  y}(x,y)<0$ for all $(x,y)$ in $\R\times(0,\infty)$,
it follows then from the implicit function theorem (for analytic
functions; see \cite[Theorem~7.6]{FG}), that $v_\alpha$ is
analytic on $(-c_{\alpha},\infty)$ with derivative given by
\begin{equation}
v_\alpha'(x)=
\frac{-\frac{\partial\tilde{F}}{\partial x}(x,v_\alpha(x))}
{\frac{\partial\tilde{F}}{\partial y}(x,v_\alpha(x))}
=\frac{-\int_0^\infty\frac{(x-t)t\e^{-t}}{((x-t)^2+v_\alpha(x)^2)^2}\6t}
{v_\alpha(x)\int_0^\infty\frac{t\e^{-t}}{((x-t)^2+v_\alpha(x)^2)^2}\6t},
\qquad(x\in(-c_\alpha,\infty)).
\label{eq4.6}
\end{equation}
In particular $v_\alpha$ is continuous on $(-c_\alpha,\infty)$. From the
defining relations \eqref{eq3.22} and \eqref{eq3.10}
it is standard to check that $v_\alpha(x)\to0$ as
$x\downarrow-c_\alpha$. Thus, $v_\alpha$ is continuous at $-c_\alpha$
as well and hence on all of $\R$.

(ii) \ Using Lemma~\ref{egensk v-alpha II}(i) we find for any positive
$\epsilon$ that 
\begin{equation*}
\liminf_{x\to\infty}\frac{v_{\alpha}(x)}{x\e^{-x}}
\ge\liminf_{x\to\infty}
\frac{2\alpha(x-\epsilon)\e^{-x-\epsilon}\arctan(\tfrac{\epsilon}{v_\alpha(x)})}
{x\e^{-x}}
=2\alpha\e^{-\epsilon}\frac{\pi}{2}
=\e^{-\epsilon}\alpha\pi,
\end{equation*}
where we have used that $v_\alpha(x)\to0$ as $x\to\infty$
(cf.\ Lemma~\ref{egensk v-alpha II}(ii)). Letting then $\epsilon\to0$,
it follows that 
\begin{equation}
\liminf_{x\to\infty}\frac{v_{\alpha}(x)}{x\e^{-x}}\ge\alpha\pi.
\label{eq3.7}
\end{equation}
Using Lemma~\ref{egensk v-alpha II}(ii) we find similarly for
$\epsilon$ in $(0,1)$ that 
\begin{equation*}
\limsup_{x\to\infty}\frac{v_{\alpha}(x)}{x\e^{-x}}
\le\limsup_{x\to\infty}
\frac{2\alpha(x+\epsilon)\e^{-x+\epsilon}\arctan(\tfrac{\epsilon}{v_\alpha(x)})}
{(1-\epsilon)x\e^{-x}}=\frac{\e^{\epsilon}}{1-\epsilon}\alpha\pi,
\end{equation*}
and letting then $\epsilon\to0$, we conclude that
\begin{equation}
\limsup_{x\to\infty}\frac{v_{\alpha}(x)}{x\e^{-x}}
\le\alpha\pi.
\label{eq3.8}
\end{equation}
Combining \eqref{eq3.7} and \eqref{eq3.8} completes the proof of (ii).

(iii) \ Let $\epsilon$ be a fixed number in
  $(0,\frac{1}{2}]$. For any $x$ in $[\epsilon,\infty)$, we note then
  that
\begin{equation*}
\begin{split}
\alpha\Big(\int_0^{x-\epsilon}&\frac{t\e^{-t}}{(x-t)^2+v_\alpha(x)^2}\6t
+\int_{x+\epsilon}^{\infty}\frac{t\e^{-t}}{(x-t)^2+v_\alpha(x)^2}\6t\Big)
\\[.2cm]
&\le\alpha\epsilon^{-2}
\Big(\int_0^{x-\epsilon}t\e^{-t}\6t
+\int_{x+\epsilon}^{\infty}t\e^{-t}\6t\Big)
\le \alpha\epsilon^{-2}\int_0^{\infty}t\e^{-t}\6t=\alpha\epsilon^{-2},
\end{split}
\end{equation*}
and thus
\[
\sup_{x\in[\epsilon,\infty)}
\alpha\Big(\int_0^{x-\epsilon}\frac{t\e^{-t}}{(x-t)^2+v_\alpha(x)^2}\6t
+\int_{x+\epsilon}^{\infty}\frac{t\e^{-t}}{(x-t)^2+v_\alpha(x)^2}\6t\Big)
\longrightarrow0, \quad\text{as $\alpha\to0$}.
\]
In combination with \eqref{eq3.22} this implies that we may choose
$\alpha_1$ in $(0,\infty)$ such that for all $\alpha$ in
$(0,\alpha_1]$ and all $x$ in $[\epsilon,\infty)$ we have that
\begin{equation*}
\begin{split}
1-\epsilon&\le\alpha
\int_{x-\epsilon}^{x+\epsilon}\frac{t\e^{-t}}{(x-t)^2+v_\alpha(x)^2}\6t
\\[.2cm]
&=2\alpha(x+\epsilon)\e^{-x+\epsilon}v_\alpha(x)^{-1}
\arctan\big(\tfrac{\epsilon}{v_\alpha(x)}\big)
\le\pi\alpha(x+\epsilon)\e^{-x+\epsilon}v_\alpha(x)^{-1},
\end{split}
\end{equation*}
where we have re-used the calculation \eqref{eq3.22b}.
Hence, it follows that
\begin{equation}
\frac{v_\alpha(x)}{\alpha}
\le\frac{\pi(x+\epsilon)\e^{-x+\epsilon}}{1-\epsilon}
\quad\text{for all $x$ in $[\epsilon,\infty)$ and $\alpha$ in
  $(0,\alpha_1]$}.
\label{addeq2}
\end{equation}
Since $\epsilon\le\frac{1}{2}$, we find in particular for all $\alpha$
in $(0,\alpha_1]$ that
\begin{equation}
\sup_{x\in[\epsilon,\infty)}v_\alpha(x)\le K_\epsilon\alpha,
\quad\text{where}\quad
K_\epsilon:=2\pi\sqrt{\e}\sup_{x\in[\epsilon,\infty)}(x+\tfrac{1}{2})\e^{-x}
<\infty.
\label{addeq1}
\end{equation}
For any $x$ in $[\epsilon,\infty)$ and $\alpha$ in $(0,\infty)$, we
note next that
\begin{equation*}
\begin{split}
1&\ge\alpha\int_{x-\epsilon}^{x+\epsilon}\frac{t\e^{-t}}{(x-t)^2+v_\alpha(x)^2}\6t
\ge\alpha(x-\epsilon)\e^{-x-\epsilon}v_\alpha(x)^{-2}
\int_{x-\epsilon}^{x+\epsilon}\frac{1}{1+(\frac{t-x}{v_\alpha(x)})^2}\6t
\\[.2cm]
&=2\alpha(x-\epsilon)\e^{-x-\epsilon}v_\alpha(x)^{-1}
\arctan\big(\tfrac{\epsilon}{v_\alpha(x)}\big).
\end{split} 
\end{equation*}
In combination with \eqref{addeq1} this shows that for all $x$ in
$[\epsilon,\infty)$ and $\alpha$ in $(0,\alpha_1]$ we have that
\[
\frac{v_\alpha(x)}{\alpha}\ge2(x-\epsilon)\e^{-x-\epsilon}\arctan(\epsilon
K_\epsilon^{-1}\alpha^{-1}).
\]
Hence, we may choose $\alpha_2$ in $(0,\alpha_1]$, such that
\begin{equation}
\frac{v_\alpha(x)}{\alpha}\ge2(x-\epsilon)\e^{-x-\epsilon}
(1-\epsilon)\tfrac{\pi}{2}
=(1-\epsilon)\pi(x-\epsilon)\e^{-x-\epsilon}
\label{addeq3}
\end{equation}
for all $x$ in $[\epsilon,\infty)$ and $\alpha$ in $(0,\alpha_2]$.
Combining now \eqref{addeq2} and \eqref{addeq3}, it follows for any
$\alpha$ in $(0,\alpha_2]$ that
\begin{equation}
\begin{split}
\sup_{x\in[\epsilon,\infty)}&\Big|\frac{v_\alpha(x)}{\alpha}-\pi
x\e^{-x}\Big|
\\[.2cm]
&\le\pi\sup_{x\in[\epsilon,\infty)}\Big[
\big(x\e^{-x}-(1-\epsilon)(x-\epsilon)\e^{-x-\epsilon}\big)
\vee
\big((1-\epsilon)^{-1}(x+\epsilon)\e^{-x+\epsilon}-x\e^{-x}\big)\Big].
\label{addeq5}
\end{split}
\end{equation}
Using that the function $x\mapsto x\e^{-x}$ is bounded on
$(0,\infty)$, it is standard to check that
\begin{equation}
\sup_{x\in(0,\infty)}
\big(x\e^{-x}-(1-\epsilon)(x-\epsilon)\e^{-x-\epsilon}\big),
\sup_{x\in(0,\infty)}
\big((1-\epsilon)^{-1}(x+\epsilon)\e^{-x+\epsilon}-x\e^{-x}\big)
\underset{\epsilon\to0}{\longrightarrow}0. 
\label{addeq4}
\end{equation}
To complete the proof, assume that positive numbers $\delta$ and
$\gamma$ are given. By \eqref{addeq4} we may then choose $\epsilon$ in
$(0,\delta\wedge\frac{1}{2}]$ such that the right hand side of
\eqref{addeq5} is smaller than $\gamma$. Applying the above
considerations to this $\epsilon$, it follows that we may choose
$\alpha_2$ in $(0,\infty)$, such that
\[
\sup_{x\in[\delta,\infty)}\Big|\frac{v_\alpha(x)}{\alpha}-\pi
x\e^{-x}\Big|\le
\sup_{x\in[\epsilon,\infty)}\Big|\frac{v_\alpha(x)}{\alpha}-\pi
x\e^{-x}\Big|\le\gamma,
\]
whenever $\alpha\in(0,\alpha_2]$.
\end{proofof}

For the proof of Lemma~\ref{egensk P-alpha} we need the following
auxiliary result.

\begin{lemma}\label{asympt lemma for P-alpha}

Let $\alpha,r,\epsilon $ be positive numbers such that
$\epsilon<1$. We then have 

\begin{enumerate}[i]

\item 
$x\displaystyle{\int_{(0,\infty)
\setminus(x-\frac{\epsilon}{x},x+\frac{\epsilon}{x})}
\frac{t^r\e^{-t}}{(x-t)^2+v_\alpha(x)^2}\6t\longrightarrow0}$,
\quad as $x\to\infty$.

\item If $0<\epsilon<x$, then
\begin{equation*}
\begin{split}
\frac{2(x-\epsilon)^r\e^{-x-\epsilon}}{v_\alpha(x)}
\arctan(\tfrac{\epsilon}{v_\alpha(x)})
&\le
\int_{x-\epsilon}^{x+\epsilon}\frac{t^r\e^{-t}}{(x-t)^2+v_\alpha(x)^2}\6t
\\[.2cm]&\le
\frac{2(x+\epsilon)^r\e^{-x+\epsilon}}{v_\alpha(x)}
\arctan(\tfrac{\epsilon}{v_\alpha(x)}).
\end{split}
\end{equation*}

\item For all sufficiently large positive $x$ we have that
\[
\frac{(x-\epsilon)(x^2-\epsilon)^2\e^{-\frac{2\epsilon}{x}}}
{\alpha x^2(x^2+\epsilon)}
\le\int_{x-\frac{\epsilon}{x}}^{x+\frac{\epsilon}{x}}
\frac{t^2\e^{-t}}{(x-t)^2+v_\alpha(x)^2}\6t
\le\frac{(x^2+\epsilon)^2\e^{\frac{2\epsilon}{x}}}
{\alpha x(x^2-\epsilon)}.
\]
\end{enumerate}
\end{lemma}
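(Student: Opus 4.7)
The proofs of (i) and (ii) are direct estimations on carefully chosen subintervals of $(0,\infty)$, and they serve as the building blocks for (iii).

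For (i), I would split the complement $(0,\infty)\setminus(x-\epsilon/x,\, x+\epsilon/x)$ into four pieces: $(0, x/2)$, $(x/2,\, x-\epsilon/x)$, $(x+\epsilon/x,\, 2x)$, and $(2x,\infty)$. On the first and last, $|x-t|$ is at least $x/2$ and $x$ respectively, so $(x-t)^2+v_\alpha(x)^2 \geq x^2/4$ there, and using $\int_0^\infty t^r\e^{-t}\6t = \Gamma(r+1)$ resp.\ $\int_{2x}^\infty t^r\e^{-t}\6t = O(x^r\e^{-2x})$, the contribution to $x\cdot\int$ is $O(1/x)$ resp.\ $O(x^{r-1}\e^{-2x})$. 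On the middle two subintervals $|x-t|\ge\epsilon/x$, so the denominator is at least $\epsilon^2/x^2$; but for $x$ past the maximum of $t\mapsto t^r\e^{-t}$ we have $\sup_{t\ge x/2} t^r\e^{-t}=(x/2)^r\e^{-x/2}$, and the interval length is at most $x/2$, so $x\cdot\int$ is exponentially small here as well.

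For (ii), on $[x-\epsilon,\, x+\epsilon]$ the monotonicity of $t^r$ (increasing) and $\e^{-t}$ (decreasing) gives $(x-\epsilon)^r\e^{-(x+\epsilon)} \le t^r\e^{-t} \le (x+\epsilon)^r\e^{-(x-\epsilon)}$. Factoring these extremal values out of the integral reduces the problem to evaluating $\int_{x-\epsilon}^{x+\epsilon}\frac{\6t}{(x-t)^2+v_\alpha(x)^2} = \frac{2}{v_\alpha(x)}\arctan\bigl(\frac{\epsilon}{v_\alpha(x)}\bigr)$ via the substitution $u=(t-x)/v_\alpha(x)$, which yields the stated two inequalities.

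For (iii), the central observation is the pinching
\[
\frac{x^2-\epsilon}{x}\int_I \frac{t\e^{-t}}{(x-t)^2+v_\alpha(x)^2}\6t
\le \int_I \frac{t^2\e^{-t}}{(x-t)^2+v_\alpha(x)^2}\6t
\le \frac{x^2+\epsilon}{x}\int_I \frac{t\e^{-t}}{(x-t)^2+v_\alpha(x)^2}\6t,
\]
with $I=[x-\epsilon/x,\, x+\epsilon/x]$, obtained from $t\in[(x^2-\epsilon)/x,\,(x^2+\epsilon)/x]$ on $I$. The upper bound in (iii) follows at once from $\int_I\frac{t\e^{-t}}{(x-t)^2+v_\alpha(x)^2}\6t \le \int_0^\infty\frac{t\e^{-t}}{(x-t)^2+v_\alpha(x)^2}\6t = 1/\alpha$ by \eqref{eq3.22}, together with the elementary inequality $x^2-\epsilon \le (x^2+\epsilon)\e^{2\epsilon/x}$. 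For the lower bound, I would factor the target as $\frac{x^2-\epsilon}{x\alpha}\cdot g(x)$ with $g(x)=\frac{(x-\epsilon)(x^2-\epsilon)\e^{-2\epsilon/x}}{x(x^2+\epsilon)}$ satisfying $g(x)\to 1$ and $1-g(x)\sim 3\epsilon/x$; it then suffices to show $\int_I\frac{t\e^{-t}}{(x-t)^2+v_\alpha(x)^2}\6t \ge g(x)/\alpha$ for large $x$. Writing this integral as $1/\alpha$ minus its complement and reusing the estimates from the proof of (i) with $r=1$ bounds the complement by $O(1/x^2)$, which is $o(1-g(x))$, so the inequality holds for all sufficiently large $x$. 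The main obstacle is precisely this last step: \eqref{eq3.22} only fixes the full integral over $(0,\infty)$, so the sharp quantitative lower bound in (iii) cannot be extracted from (ii) alone and requires reopening the regional tail estimates from the proof of (i).
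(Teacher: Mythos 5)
Your parts (i) and (ii) follow the paper's own proof essentially verbatim: the same four-interval decomposition $(0,x/2)$, $(x/2,x-\epsilon/x)$, $(x+\epsilon/x,2x)$, $(2x,\infty)$ for (i), and the same factoring-out of the extremal values of $t^r\e^{-t}$ together with the exact evaluation $\int_{x-\epsilon}^{x+\epsilon}\frac{\6t}{(x-t)^2+v_\alpha(x)^2}=\frac{2}{v_\alpha(x)}\arctan\bigl(\tfrac{\epsilon}{v_\alpha(x)}\bigr)$ for (ii). (Your remark that the interval length is ``at most $x/2$'' is off for the piece $(x+\epsilon/x,2x)$, whose length is nearly $x$, but this is immaterial since that contribution is exponentially small in any case.) Part (iii), however, you prove by a genuinely different and arguably cleaner route. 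The paper applies (ii) with $\epsilon$ replaced by $\epsilon/x$ and $r=2$, and then eliminates $v_\alpha(x)$ through two-sided bounds carrying matching $\arctan\bigl(\tfrac{\epsilon}{xv_\alpha(x)}\bigr)$ factors that cancel: the lower bound on $v_\alpha(x)$ from Lemma~\ref{egensk v-alpha II}(i) yields the upper estimate in (iii), while the upper bound on $v_\alpha(x)$ needed for the lower estimate is itself manufactured from \eqref{eq3.22}, part (i) with $r=1$, and part (ii) with $r=1$. You instead write $t^2\e^{-t}=t\cdot t\e^{-t}$ and pinch $t$ between $(x^2\mp\epsilon)/x$ on $I=[x-\tfrac{\epsilon}{x},x+\tfrac{\epsilon}{x}]$, reducing (iii) to the single integral $\int_I\frac{t\e^{-t}}{(x-t)^2+v_\alpha(x)^2}\6t$: its upper bound $1/\alpha$ is immediate from \eqref{eq3.22}, and the required lower bound $g(x)/\alpha$ holds for large $x$ because the complementary integral is $o(1/x)$ by (i) with $r=1$ (indeed $O(1/x^2)$ from the estimates in its proof), while $1-g(x)\sim 3\epsilon/x$. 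This bypasses (ii) and the $v_\alpha$-estimates of Lemma~\ref{egensk v-alpha II} entirely in the proof of (iii), and in fact yields the slightly sharper upper bound $\tfrac{x^2+\epsilon}{\alpha x}$, which implies the stated one since $x^2-\epsilon\le(x^2+\epsilon)\e^{2\epsilon/x}$; what the paper's route buys is that it stays within the $\arctan$ framework already developed for $v_\alpha$ and simply reuses those computations. Both arguments are correct.
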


\begin{proof}
(i) \ We note first that for
 $x$ in, say, $(2,\infty)$, we have that
\begin{equation*}
\int_{x+\frac{\epsilon}{x}}^{2x}\frac{t^r\e^{-t}}{(x-t)^2+v_\alpha(x)^2}\6t
\le (2x)^r\e^{-x-\frac{\epsilon}{x}}\int_{x+\frac{\epsilon}{x}}^{2x}
\frac{1}{(\frac{\epsilon}{x})^2}\6t
=2^r\epsilon^{-2}x^r(x^3-\epsilon x)\e^{-x-\frac{\epsilon}{x}},
\end{equation*}
and similarly that
\begin{equation*}
\begin{split}
\int_{x/2}^{x-\frac{\epsilon}{x}}\frac{t^r\e^{-r}}{(x-t)^2+v_\alpha(x)^2}\6t
\le (x-\tfrac{\epsilon}{x})^r\e^{-x/2}\int_{x/2}^{x-\frac{\epsilon}{x}}
\frac{1}{(\frac{\epsilon}{x})^2}\6t
=\epsilon^{-2}(x-\tfrac{\epsilon}{x})^r(\tfrac{x^3}{2}-\epsilon x)\e^{-x/2}.
\end{split}
\end{equation*}
Moreover,
\begin{equation*}
\int_{2x}^{\infty}\frac{t^r\e^{-r}}{(x-t)^2+v_\alpha(x)^2}\6t
\le\int_{2x}^{\infty}\frac{t^r\e^{-r}}{x^2}\6t
=\frac{1}{x^2}\int_{2x}^{\infty}t^r\e^{-r}\6t,
\end{equation*}
and
\begin{equation*}
\int_{0}^{x/2}\frac{t^r\e^{-r}}{(x-t)^2+v_\alpha(x)^2}\6t
\le\int_{0}^{x/2}\frac{t^r\e^{-r}}{(\frac{x}{2})^2}\6t
=\frac{4}{x^2}\int_{0}^{x/2}t^r\e^{-r}\6t.
\end{equation*}
Now, the sum of the left hand sides of the 4 estimates above is equal
to the integral
$\int_{(0,\infty)\setminus(x-\frac{\epsilon}{x},x+\frac{\epsilon}{x})}
\frac{t^r\e^{-t}}{(x-t)^2+v_\alpha(x)^2}\6t$, and the sum of the right
hand sides is clearly of size $o(x^{-1})$ as $x\to\infty$. This shows
(i).

(ii) \ Assume that $0<\epsilon<x$. Arguing as in the proof of
Lemma~\ref{egensk v-alpha}(ii), we find that
\begin{equation*}
\begin{split}
\int_{x-\epsilon}^{x+\epsilon}\frac{t^r\e^{-t}}{(t-x)^2+v_{\alpha}(x)^2}\6t 
&\le\frac{(x+\epsilon)^r\e^{-x+\epsilon}}{v_{\alpha}(x)^2}
\int_{x-\epsilon}^{x+\epsilon}\frac{1}{1+(\frac{t-x}{v_{\alpha}(x)})^2}\6t
\\[.2cm]
&=\frac{2(x+\epsilon)^r\e^{-x+\epsilon}}{v_{\alpha}(x)}
\arctan(\tfrac{\epsilon}{v_\alpha(x)}),
\end{split}
\end{equation*}
which proves the second estimate in (ii). The first estimate follows
similarly.

\item[(iii)] Considering $x$ in $(1,\infty)$, we find by application
  of (ii) and Lemma~\ref{egensk v-alpha II}(i) (with $\epsilon$ replaced
  by $\epsilon/x$) that
\begin{equation*}
\begin{split}
\int_{x-\frac{\epsilon}{x}}^{x+\frac{\epsilon}{x}}
\frac{t^2\e^{-t}}{(x-t)^2+v_\alpha(x)^2}\6t
&\le\frac{2(x+\frac{\epsilon}{x})^2\e^{-x+\frac{\epsilon}{x}}}{v_\alpha(x)}
\arctan(\tfrac{\epsilon}{xv_\alpha(x)})
\\[.2cm]
&\le\frac{2(x+\frac{\epsilon}{x})^2\e^{-x+\frac{\epsilon}{x}}
\arctan(\tfrac{\epsilon}{xv_\alpha(x)})}
{2\alpha(x-\frac{\epsilon}{x})\e^{-x-\frac{\epsilon}{x}}
\arctan(\tfrac{\epsilon}{xv_{\alpha}(x)})}
=\frac{(x^2+\epsilon)^2\e^{\frac{2\epsilon}{x}}}
{\alpha x(x^2-\epsilon)},
\end{split}
\end{equation*}
which proves the second estimate in (iii). Regarding the first
estimate, we note first that it follows from (i) that
\begin{equation*}
\int_{(0,\infty)\setminus(x-\frac{\epsilon}{x},x+\frac{\epsilon}{x})}
\frac{t\e^{-t}}{(x-t)^2+v_\alpha(x)^2}\6t\le\frac{\epsilon}{\alpha x}
\end{equation*}
for all sufficiently large $x$, and hence by \eqref{eq3.22} and (ii)
\begin{equation*}
\frac{(1-\tfrac{\epsilon}{x})}{\alpha}\le
\int_{x-\frac{\epsilon}{x}}^{x+\frac{\epsilon}{x}}
\frac{t\e^{-t}}{(x-t)^2+v_\alpha(x)^2}\6t
\le\frac{2(x+\frac{\epsilon}{x})\e^{-x+\frac{\epsilon}{x}}}{v_\alpha(x)}
\arctan(\tfrac{\epsilon}{xv_\alpha(x)})
\end{equation*}
for all sufficiently large $x$. For such $x$ we may thus conclude that
\begin{equation*}
v_\alpha(x)
\le\frac{2\alpha(x+\frac{\epsilon}{x})
\e^{-x+\frac{\epsilon}{x}}}{1-\frac{\epsilon}{x}}
\arctan(\tfrac{\epsilon}{xv_\alpha(x)}),
\end{equation*}
which in combination with (ii) yields that
\begin{equation*}
\begin{split}
\int_{x-\frac{\epsilon}{x}}^{x+\frac{\epsilon}{x}}
\frac{t^2\e^{-t}}{(x-t)^2+v_\alpha(x)^2}\6t
&\ge
\frac{2(x-\frac{\epsilon}{x})^2\e^{-x-\frac{\epsilon}{x}}}{v_\alpha(x)}
\arctan(\tfrac{\epsilon}{xv_\alpha(x)})
\\[.2cm]&\ge
\frac{2(1-\frac{\epsilon}{x})(x-\frac{\epsilon}{x})^2
\e^{-x-\frac{\epsilon}{x}}\arctan(\tfrac{\epsilon}{xv_\alpha(x)})}
{2\alpha(x+\frac{\epsilon}{x})
\e^{-x+\frac{\epsilon}{x}}\arctan(\tfrac{\epsilon}{xv_\alpha(x)})}
\\[.2cm]&=
\frac{(x-\epsilon)(x^2-\epsilon)^2\e^{-\frac{2\epsilon}{x}}}
{\alpha x^2(x^2+\epsilon)},
\end{split}
\end{equation*}
for all sufficiently large $x$. This completes the proof.
\end{proof}

\begin{proofof}[Proof of Lemma~\ref{egensk P-alpha}.] 
(i) \ Since $H_\alpha$ is analytic on
  $\C\setminus[0,\infty)$ and $v_\alpha$ is analytic on
    $\R\setminus\{-c_\alpha\}$, it 
    follows immediately from \eqref{eq3.13} that $P_\alpha$ is
    analytic on $\R\setminus\{-c_\alpha\}$. Since $v_\alpha$ is continuous
    on $\R$, it follows also that so is $P_\alpha$.

(ii) \ For $x$ in $(-\infty,-c_\alpha)$, formula \eqref{eq3.14}
    follows immediately from \eqref{eq3.12}, since
    $v_\alpha(x)=0$. For $x$ in 
    $[-c_\alpha,\infty)$ we find, using Lemma~\ref{intro v-alpha}(iii),
    \eqref{eq3.12} and \eqref{eq3.22}, that 
\begin{equation*}
\begin{split}
P_\alpha(x)&=H_\alpha(x+\ri v_\alpha(x))
=\re\big(H_\alpha(x+\ri v_\alpha(x))\big)
\\[.2cm]
&=x+\alpha+
\alpha\int_0^\infty\re\Big(\frac{t\e^{-t}}{x+\ri
  v_\alpha(x)-t}\Big)\6t
\\[.2cm]
&=x+\alpha+
\alpha\int_0^\infty\frac{(x-t)t\e^{-t}}{(x-t)^2+v_\alpha(x)^2}\6t
\\[.2cm]
&=x+\alpha+
\alpha x\int_0^\infty\frac{t\e^{-t}}{(x-t)^2+v_\alpha(x)^2}\6t
-\alpha\int_0^\infty\frac{t^2\e^{-t}}{(x-t)^2+v_\alpha(x)^2}\6t
\\[.2cm]
&=2x+\alpha-\alpha\int_0^\infty\frac{t^2\e^{-t}}{(x-t)^2+v_\alpha(x)^2}\6t,
\end{split}
\end{equation*}
as desired.

(iii) \ Considering the function $H_\alpha$ restricted to
$(-\infty,0)$, it follows from \eqref{eq3.12} and dominated
convergence that
\[
\lim_{z\to-\infty\atop z\in\R}H_\alpha(z)=-\infty, \qand
\lim_{z\to0\atop z\in(-\infty,0)}H_\alpha(z)=0.
\]
From \eqref{eq3.9} it follows further that
\[
H_\alpha'(z)>0, \ \text{if $z\in(-\infty,-c_\alpha)$}, \qand
H_\alpha'(z)<0, \ \text{if $z\in(-c_\alpha,0)$}.
\]
Since $P_\alpha=H_\alpha$ on $(-\infty,-c_\alpha]$, we deduce from these
observations that $P_\alpha$ is strictly increasing on
$(-\infty,-c_\alpha]$, and that
$s_\alpha>\inf_{z\in(-c_\alpha,0)}H_\alpha(z)=0$.

(iv) \ Using formula \eqref{eq3.14} as well as (i) and (iii) of
  Lemma~\ref{asympt lemma for P-alpha} we find for any $\epsilon$ in
  $(0,1)$ that
\begin{equation*}
\begin{split}
\limsup_{x\to\infty}\big(x+\alpha-P_\alpha(x)\big)
&=\limsup_{x\to\infty}\Big(-x+\alpha\int_0^{\infty}
\frac{t^2\e^{-t}}{(x-t)^2+v_\alpha(x)^2}\6t\Big)
\\[.2cm]&=
\limsup_{x\to\infty}\Big(-x+\alpha\int_{x-\frac{\epsilon}{x}}^{x+\frac{\epsilon}{x}}
\frac{t^2\e^{-t}}{(x-t)^2+v_\alpha(x)^2}\6t\Big)
\\[.2cm]&\le
\limsup_{x\to\infty}\Big(-x+
\alpha\frac{(x^2+\epsilon)^2\e^{\frac{2\epsilon}{x}}}
{\alpha x(x^2-\epsilon)}\Big)
\\[.2cm]&=
\limsup_{x\to\infty}\Big(-x+
\tfrac{(x^2+\epsilon)}{x}\Big(1+\tfrac{2\epsilon}{x^2-\epsilon}\Big)
\e^{\frac{2\epsilon}{x}}\Big)
\\[.2cm]&=
\limsup_{x\to\infty}\Big(x(\e^{\frac{2\epsilon}{x}}-1)+\Big(
\tfrac{2\epsilon x}{x^2-\epsilon}+\tfrac{\epsilon}{x}+
\tfrac{2\epsilon^2}{x(x^2-\epsilon)}\Big)\e^{\frac{2\epsilon}{x}}\Big)
\\[.2cm]&=
\lim_{x\to\infty}\frac{\e^{\frac{2\epsilon}{x}}-1}{\frac{1}{x}-0}
=2\epsilon.
\end{split}
\end{equation*}
Arguing similarly we find next that
\begin{equation*}
\begin{split}
\liminf_{x\to\infty}\big(x+\alpha-P_\alpha(x)\big)
&\ge
\liminf_{x\to\infty}\Big(-x+
\alpha\frac{(x-\epsilon)(x^2-\epsilon)^2\e^{-\frac{2\epsilon}{x}}}
{\alpha x^2(x^2+\epsilon)}\Big),
\\[.2cm]&=
\liminf_{x\to\infty}\Big(-x+
\tfrac{(x-\epsilon)(x^2-\epsilon)}{x^2}
\Big(1-\tfrac{2\epsilon}{x^2+\epsilon}\Big)
\e^{-\frac{2\epsilon}{x}}\Big)
\\[.2cm]&=
\liminf_{x\to\infty}\Big(x(\e^{-\frac{2\epsilon}{x}}-1)
-\tfrac{2\epsilon x}{x^2+\epsilon}\e^{-\frac{2\epsilon}{x}}
+\tfrac{\epsilon^2-x\epsilon-x^2\epsilon}{x^2}
\Big(1-\tfrac{2\epsilon}{x^2+\epsilon}\Big)\e^{-\frac{2\epsilon}{x}}\Big)
\\[.2cm]&=
-3\epsilon.
\end{split}
\end{equation*}
Combining the two estimates obtained above, and letting
$\epsilon\to0$, we obtain (i).

(v) \  From \eqref{eq3.14} and (iv), it is clear that
$P_\alpha(x)\to\pm\infty$ as $x\to\pm\infty$, and since $P_\alpha$ is
continuous, it suffices thus to prove that $P_\alpha'(x)>0$ for all
$x$ in $\R\setminus\{-c_\alpha\}$.
In the proof of (iii) we already noted that $P_\alpha'(x)>0$ for all
$x$ in $(-\infty,-c_\alpha)$. For $x$ in $(-c_\alpha,\infty)$ we find
by differentiation in \eqref{eq3.13} that 
\begin{equation}
\begin{split}
P'_\alpha(x)&=H'_\alpha(x+\ri v_\alpha(x))(1+\ri v'_\alpha(x))
\\[.2cm]
&=\re\big(H'_\alpha(x+\ri v_\alpha(x))\big)
-\im\big(H'_\alpha(x+\ri v_\alpha(x))\big)v'_\alpha(x),
\label{eq4.2}
\end{split}
\end{equation}
where we have used that $P'_\alpha(x)\in\R$, so that
\begin{equation}
0=\im(P'_\alpha(x))=\re\big(H'_\alpha(x+\ri v_\alpha(x))\big)v'_\alpha(x)
+\im\big(H'_\alpha(x+\ri v_\alpha(x))\big).
\label{eq4.3}
\end{equation}
According to Lemma~\ref{H' ikke 0}, $\re(H'_\alpha(x+\ri
v_\alpha(x)))>0$, and hence \eqref{eq4.3} implies that
\[
v'_\alpha(x)=\frac{-\im\big(H'_\alpha(x+\ri
  v_\alpha(x))\big)}{\re\big(H'_\alpha(x+\ri v_\alpha(x))\big)},
\]
which inserted into \eqref{eq4.2} yields that
\[
P'_\alpha(x)=\re\big(H'_\alpha(x+\ri v_\alpha(x))\big)
+\frac{\im\big(H'_\alpha(x+\ri v_\alpha(x))\big)^2}
{\re\big(H'_\alpha(x+\ri v_\alpha(x))\big)} 
=\frac{|H'_\alpha(x+\ri v_\alpha(x))|^2}
{\re\big(H'_\alpha(x+\ri v_\alpha(x))\big)}>0,
\]
as desired.
\end{proofof}

{\small

}

\vspace{1.5cm}

\begin{minipage}[c]{0.5\textwidth}
Department of Mathematical Sciences\\
University of Copenhagen\\
Universitetsparken~5\\
2100 Copenhagen {\O}\\
Denmark\\
{\tt haagerup@math.ku.dk}
\end{minipage}
\hfill
\begin{minipage}[c]{0.5\textwidth}
Department of Mathematics\\
University of Aarhus\\
Ny Munkegade 118\\
8000 Aarhus C\\
Denmark\\
{\tt steenth@imf.au.dk}
\end{minipage}

\end{document}